\DeclareMathOperator{\Perm}{Perm}
\newcommand{\abs}[1]{\lvert #1 \rvert}
\newcommand{\umod}[1]{(\mathbb{Z}/ #1\, \mathbb{Z})^\times}
\newcommand{\fl}[1]{\left\lfloor #1 \right\rfloor}
\newcommand{\dangle}[1]{\left\langle #1 \right\rangle}
\newcommand{\zmod}[1]{\mathbb{Z}/ #1\, \mathbb{Z}}
\newcommand{\dbracket}[1]{\left\llbracket #1 \right\rrbracket}
\def\odd{\mathrm{odd}}
\def\nw{\mathrm{new}}
\def\old{\mathrm{old}}
\DeclareMathSymbol{\twoheadrightarrow} {\mathrel}{AMSa}{"10}
\DeclareMathOperator{\ord}{ord}
\DeclareMathOperator{\ddiv}{div}
\DeclareMathOperator{\Res}{Res}
\DeclareMathOperator{\Aut}{Aut}
\DeclareMathOperator{\End}{End}
\DeclareMathOperator{\Id}{Id}
\DeclareMathOperator{\Gal}{Gal}
\DeclareMathOperator{\Mat}{Mat}
\DeclareMathOperator{\Tr}{Tr}
\DeclareMathOperator{\Lie}{Lie}
\DeclareMathOperator{\GL}{GL}
\DeclareMathOperator{\CSp}{CSp}
\newcommand{\jj}{\mathfrak{j}}
\newcommand{\ii}{\mathfrak{i}}
\newcommand{\CV}{\mathfrak{C}}
\newcommand{\SV}{\mathfrak{S}}
\newcommand{\SZ}{\mathscr{S}}
\newcommand{\TZ}{\mathscr{T}}
\newcommand{\VZ}{\mathscr{V}}
\newcommand{\Q}{\mathbf{Q}}
\newcommand{\cc}{\mathbb{C}}
\newcommand{\nn}{\mathbb{N}}
\newcommand{\pp}{\mathbb{P}}
\newcommand{\qq}{\mathbb{Q}}
\newcommand{\rr}{\mathbb{R}}
\newcommand{\zz}{\mathbb{Z}}
\newcommand{\EE}{\mathcal{E}}
\newcommand{\OO}{\mathcal{O}}
\def\Sn{{\mathbf S}_n}
\def\An{{\mathbf A}_n}
\newtheorem{thm}{Theorem}[section]
\newtheorem{lem}[thm]{Lemma}
\newtheorem{cor}[thm]{Corollary}
\newtheorem{prop}[thm]{Proposition}
\theoremstyle{definition}
\newtheorem{defn}[thm]{Definition}
\newtheorem{exs}[thm]{Examples}
\newtheorem{rem}[thm]{Remark}
\newtheorem{sect}[thm]{}
\newtheorem{step}{Step}
\numberwithin{equation}{section}
\newtheoremstyle{notitle}  
  {}
  {}
  {\itshape}
  {}
  {}
  {\ }
  {.5em}
  {}
\theoremstyle{notitle}
\title[Endomorphism algebra of Jacobian]{Endomorphism algebras of
  factors of certain hypergeometric Jacobians}
\author{Jiangwei Xue, Chia-Fu Yu}
\address{ Institute of Mathematics, Academia Sinica, 6F,
  Astronomy-Mathematics Building, No. 1, Sec. 4, Roosevelt Road,
  Taipei 10617, TAIWAN, R.O.C. }
\email{xue\_j\char`\@math.sinica.edu.tw, chiafu\char`\@math.sinica.edu.tw}
\begin{document}
\date{\today}
\subjclass[2010]{14H40, 11G15}
\keywords{hypergeometric curves, endomorphism algebras, Jacobians} 

\begin{abstract}
   We classify the endomorphism algebras of factors of the Jacobian of
   certain hypergeometric curves over a field of characteristic
   zero. Other than a few exceptional cases, the endomorphism algebras
   turn out to be either a cyclotomic field $E=\mathbb{Q}(\zeta_q)$, or
   a quadratic extension of $E$, or $E\oplus E$. This result may be
   viewed as a generalization of the well known results of the
   classification of endomorphism algebras of elliptic curves over
   $\mathbb{C}$.
 \end{abstract}
\maketitle

\section{Introduction}
Throughout this paper, the word ``curve'' is reserved for smooth
projective curves, and $N\in \nn$ denotes an integer strictly greater
than 1. If $N$ is a prime power $p^r$, we write $q$ for it instead.
Let $k$ be a field of characteristic zero with algebraic closure
$\bar{k}$, and $A$ be an elliptic curve over $k$.  It is a classical
result that the absolute endomorphism algebra
$\End^0(A):=\End_{\bar{k}}(A)\otimes_\zz \qq$ of $A$ is either $\qq$
or an imaginary quadratic field (cf. \cite[Theorem 5.5]{MR2514094}).
Over characteristic zero, every elliptic curve is defined by a
Weierstrass equation of the form
\begin{equation}
  \label{eq:5}
C_{f,2}: \quad y^2=f(x),
\end{equation}
where $f(x)\in k[x]$ is a polynomial of degree $3$ without multiple
roots.  It is very tempting to replace the exponent of $y$ in
(\ref{eq:5}) by $N$ and study the curve 
\begin{equation}
  \label{eq:8}
  C_{f,N}: \quad  y^N=f(x)
\end{equation}
and its Jacobian variety $J(C_{f,N})$. We are interested
in the endomorphism algebra of
$J(C_{f,N})$.  

There are multiple ways of putting $C_{f,N}$ in a slightly more
general context. In one direction (say $k=\cc$), we may look at the
hypergeometric curves $C_{\lambda, N}$ defined by
\begin{equation}
  \label{eq:hypergeometric_curve}
 y^N=x^A(x-1)^B (x-\lambda)^C,   
\end{equation}
 where $\lambda\in \cc-\{0,
1\}$. These curves are closely related to Gauss's hypergeometric
series $F(a,b,c; z)$ (cf. \cite{MR931211} by J. Wolfart).  Assume that
$A=B=C=1$. One may study the exceptional set
\[\mathscr{E}_N=\{ \lambda \in \cc-\{0,1\}\mid \text{ The Jacobian } J(C_N(\lambda)) \text{ has
  complex multiplication.}\}\] When $N=5$ or $7$, De Jong and Noot
\cite{MR1085259} showed that $\mathscr{E}_N$ is infinite, and thus providing
counter examples (in genus $g=4$ and $6$ respectively) to the
Coleman's conjecture (\cite[Conjecture 6]{MR948246}), which predicted
that for each fixed $g\geq 4$, there are only finitely number of
isomorphic classes of curves of genus $g$ whose Jacobians have complex
multiplication.  Coleman's conjecture remains to be open for $g\geq
8$. See \cite{MoonenOort} for a survey.
 Nowadays, the question of the finiteness of $\mathscr{E}_N$ is
generally seen in the light of Andr\'e--Oort Conjecture (\cite[p.~215,
problem 1]{MR990016},\cite{MR1472499}, \cite[Conjecture
1.5]{MR2166087}), which is a conjecture on the special points of
Shimura varieties.

Another general setting for $C_{f,N}$ is to allow the degree of $f(x)$
in (\ref{eq:8}) to be an arbitrary number $n\geq 3$, while still
requiring that $f(x)$ has no multiple roots.  We call such curves
\textit{superelliptic curves}. In a series of papers \cite{MR2166091},
\cite{MR2471095}, etc., Yu. G. Zarhin determined the endomorphism
algebras $\End^0(J(C_{f,N}))$, assuming that $n\geq 5$, $N=q=p^r$ is a
prime power coprime to $n$, and $f(x)$ is irreducible over $k$ with
Galois group $\Gal(f)$ equal to either the full symmetric group $\Sn$
or the alternating group $\An$ (cf. \cite[Theorem 1.1]{MR2166091},
\cite[Theorem 1.1]{MR2471095}).

To explain Zarhin's results more clearly, and to state our main
theorem, we need to introduce some new concepts.  Clearly, $C_{f,N}$
(with an arbitrary $f(x)$) admits a natural automorphism of order $N$:
\[ \delta_N: C_{f,N}\to C_{f,N}, \qquad (x,y)\mapsto (x,\xi_N y),\]
where $\xi_N\in \bar{k}$ is a primitive $N$-th root of unity. By
Albanese functoriality, $\delta_N$ induces an automorphism of
$J(C_{f,N})$, which will be denoted again by $\delta_N$ by an abuse of
notation.  Thus we obtain an embedding of the cyclic group
$G=\zmod{N}$ into $\Aut_{\bar{k}}(J(C_{f,N}))$, and hence a
homomorphism from the group ring $\qq[G]$ to $\End^0(J(C_{f,N}))$. Let
$\zeta_N:=e^{2\pi i/N}\in \cc$, and $\zeta_D:=\zeta_N^{N/D}$ for each
positive $D\mid N$. The natural isomorphism
\[ \qq[G]\cong \qq[T]/(T^N-1)\cong \prod_{D\mid N} \qq(\zeta_D)\]
gives rise to an isogeny
\begin{equation}
  \label{eq:11}
  J(C_{f,N})\sim \prod_{D\mid N,\, D\neq 1} J_{f, D}^\nw. 
\end{equation}
Each $J_{f,D}^\nw$ is isogenous to an abelian subvariety of
$J(C_{f,D})$, and $J_{f,N}^\nw$ is the abelian subvariety of
$J(C_{f,N})$ that has not appeared in $J(C_{f,D})$ for any proper
divisor $D$ of $N$ before, and thus the name ``new part'' (See
Section~\ref{sec:superelliptic-curves} for more details). We have
naturally an embedding \[\ii:
\zz[\zeta_N]\hookrightarrow \End_{\bar{k}}( J_{f,N}^\nw), \qquad
\zeta_N \mapsto \delta_N\mid_{J_{f,N}^\nw}.\] Zarhin showed that under
afore mentioned assumptions, the embedding $\ii$ is in fact an
isomorphism, and (recall that $q=p^r$)
\begin{equation}
 \label{eq:1}
   \End^0(J(C_{f,q}))\cong \prod_{i=1}^r \qq(\zeta_{p^i}).    
\end{equation}
He also treated the case $n=3, 4$ assuming some further conditions on
the base field and $\Gal(f)$(cf. \cite[Theorem 1.3]{MR2349666}). If
$q=2^r$, and $\deg f(x)=3$, then (\ref{eq:1}) need to be modified
accordingly \cite[Theorem 1.4]{MR2349666}. Many parts of this paper
are based on his results.



We will improve Zarhin's result by removing the extra assumptions and
classify $\End^0(J^\nw_{f,q})$ for all polynomials $f(x)$ of degree
$3$ with nonzero discriminants. Partial results are also obtained for
a general $N$.

The genus formula for $C_{f,N}$ for a general $f(x)$ and $N$ is given
in \cite{MR1107394} and \cite{MR1357406}. 
By (\ref{eq:14}) of Section~\ref{sec:superelliptic-curves}, if $f(x)$
is of degree 3 with nonzero discriminant, and $N>3$, then $\dim J_{f,N}^\nw= \varphi(N)$.
Therefore,
\[\dim_\qq(\qq(\zeta_N))=\varphi(N)=\dim J_{f,N}^\nw.\]
In a way, $J_{f,N}^\nw$ generalizes the elliptic curves in the sense
that they are abelian varieties naturally equipped with multiplication
by cyclotomic fields whose degree coincides with the dimension of the
 variety.

\begin{thm}[Main Theorem]\label{thm:main}
  Let $k$ be an field of characteristic zero, $q=p^r$ be a prime
  power, and $q\geq 9$ if $p=3$, and $q\geq 4$ if $p=2$.  Let $f(x)\in
  k[x]$ be a polynomial of degree $3$ with no multiple roots, and
  $J_{f,q}^\nw$ be defined as in
  Definition~\ref{defn:def-of-new-part}.  Then one of the following
  holds for $J_{f,q}^\nw$:
  \begin{enumerate}
  \item $J_{f,q}^\nw$ is absolutely simple, and $\End^0(J_{f,q}^\nw)$ is
    one of the following,
    \begin{enumerate}
    \item[(1a)] $\End^0(J_{f,q}^\nw)\cong\qq(\zeta_q)$. 
    \item[(1b)] $\End^0(J_{f,q}^\nw)\cong L$, where $L$ is a CM-field containing
      $\qq(\zeta_q)$, and\\ $[L:\qq(\zeta_q)]=2$.
    \end{enumerate}
  \item $J_{f,q}^\nw$ is not absolutely simple, and
    $\End^0(J_{f,q}^\nw)$ is one of the following,
    \begin{enumerate}
     \item[(2a)] $\End^0(J_{f,q}^\nw)\cong \qq(\zeta_q)\oplus
       \qq(\zeta_q)$, if $p\geq 5$, and $q\neq 5, 7$. 
     \item[(2b)] $\End^0(J_{f,q}^\nw)\cong \qq(\zeta_q)\oplus \qq(\zeta_q)$ or
       $\Mat_2(\qq(\zeta_q))$  if $q=3^r\geq 27$. 
    \item[(2c)]  $\End^0(J_{f,q}^\nw)\cong  \Mat_2(\qq(\zeta_q))$ if
      $q=4, 5$ or
      $9$. 
     \item[(2d)] $\End^0(J_{f,q}^\nw)\cong \Mat_3(\qq(\sqrt{-7}))\oplus
       \qq(\zeta_7)$ if $q=7$. 
     \item[(2e)] $\End^0(J_{f,q}^\nw) \cong \Mat_2(\qq(\sqrt{-1}))\oplus
       \Mat_2(\qq(\sqrt{-2}))$ if $q=8$. 
     \item[(2f)] $\End^0(J_{f,q}^\nw) \cong \qq(\zeta_q)\oplus
       \qq(\zeta_q)$ or $\Mat_2(\qq(\alpha))\oplus
       \qq(\zeta_q)$ if $q=2^r\geq 16$, where $\alpha=
       2\sqrt{-1}\sin(2\pi/q))$.
        \end{enumerate}
  \end{enumerate}
In particular,  $\End^0(J(C_{f,q}))=
\prod_{i=1}^r \End^0(J_{f,p^i}^\nw)$ is commutative if $p>7$. 
\end{thm}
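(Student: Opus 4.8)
The plan is to prove Theorem~\ref{thm:main} by analyzing the new part $J_{f,q}^\nw$ as an abelian variety equipped with the embedding $\ii\colon \qq(\zeta_q)\hookrightarrow \End^0(J_{f,q}^\nw)$, using the fact that $[\qq(\zeta_q):\qq]=\varphi(q)=\dim J_{f,q}^\nw$. First I would recall the classical dichotomy: since $\dim_\qq \qq(\zeta_q)=\dim J_{f,q}^\nw$ and $\qq(\zeta_q)$ is a CM-field, the variety is (up to isogeny) either absolutely simple with $\End^0$ a CM-field $L\supseteq \qq(\zeta_q)$ of degree $1$ or $2$ over $\qq(\zeta_q)$, or it is isogenous to a power of a simple abelian variety of lower dimension. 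This forces a case split governed by how $\qq[G]$-equivariant idempotents can split $J_{f,q}^\nw$, and the Galois action of $\Gal(\bar k/k)$ on these idempotents via the hypothesis $\Gal(f)=\Sn$ or $\An$ (for $n=3$, $\mathbf{S}_3$ or $\mathbf{A}_3$); this is where Zarhin's results quoted above (in particular \cite[Theorem 1.3]{MR2349666}, \cite[Theorem 1.4]{MR2349666}) and the genus computation \eqref{eq:14} do the heavy lifting.

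Next I would treat the ``generic'' ranges case by case. For $p\ge 5$ and $q\neq 5,7$, I expect $J_{f,q}^\nw$ to be either absolutely simple (cases 1a, 1b) or isogenous to a product of two abelian varieties each with CM by $\qq(\zeta_q)$, giving $\End^0\cong \qq(\zeta_q)\oplus\qq(\zeta_q)$ in case (2a); the key point is to rule out $\Mat_2(\qq(\zeta_q))$, which would require an abelian variety of dimension $\varphi(q)/2$ with $\qq(\zeta_q)$-multiplication — impossible since $\varphi(q)/2 < [\qq(\zeta_q):\qq]$ cannot accommodate the CM-type, unless $\varphi(q)/2$ equals the degree of a subfield, which only happens in small cases. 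The small and exceptional cases $q=4,5,7,8,9$ and $q=3^r\ge 27$, $q=2^r\ge 16$ require explicit identification of $J_{f,q}^\nw$ up to isogeny — here one invokes the known decompositions (e.g. for $q=7$ the Klein-quartic-type phenomenon giving $\Mat_3(\qq(\sqrt{-7}))$, for $q=8$ the product $\Mat_2(\qq(\sqrt{-1}))\oplus\Mat_2(\qq(\sqrt{-2}))$, etc.), which presumably occupy the bulk of the paper's earlier sections and which I would cite rather than reprove.

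For the final assertion, the isogeny \eqref{eq:11} restricted to prime-power level gives $\End^0(J(C_{f,q}))\cong \prod_{i=1}^r \End^0(J_{f,p^i}^\nw)$, since the factors $J_{f,p^i}^\nw$ have no common isogeny factors (their $\qq(\zeta_{p^i})$-actions are incompatible — an isogeny between $J_{f,p^i}^\nw$ and $J_{f,p^j}^\nw$ for $i\neq j$ would force a nonzero $\qq(\zeta_{p^i})$-module map, contradicting that the $\delta_N$-eigenvalue structures differ). Now if $p>7$, then for each $i\ge 1$ we have $q=p^i\ge p > 7$, so $p^i \neq 4,5,7,8,9$ and $p^i$ is neither a power of $2$ nor a power of $3$; hence only cases (1a), (1b), and (2a) can occur for each factor, and in every one of these $\End^0(J_{f,p^i}^\nw)$ is either a field or $\qq(\zeta_{p^i})\oplus\qq(\zeta_{p^i})$ — in all cases commutative. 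A finite product of commutative $\qq$-algebras is commutative, so $\End^0(J(C_{f,q}))$ is commutative.

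The main obstacle is pinning down the exceptional decompositions in cases (2b)--(2f): showing, for instance, that for $q=3^r\ge 27$ both $\qq(\zeta_q)\oplus\qq(\zeta_q)$ and $\Mat_2(\qq(\zeta_q))$ genuinely occur (the latter requiring an abelian variety isogenous to a square, which happens for special $f$) and that nothing else does, and identifying the precise quaternionic/matrix algebras at $q=7,8$. For the final commutativity statement specifically, however, there is essentially no obstacle beyond bookkeeping: it is an immediate consequence of the classification once one observes that $p>7$ excludes every non-commutative case in the list.
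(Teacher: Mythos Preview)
Your proposal has two genuine gaps that prevent it from going through.

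First, you invoke ``the hypothesis $\Gal(f)=\mathbf{S}_n$ or $\mathbf{A}_n$'' and Zarhin's results that depend on it, but Theorem~\ref{thm:main} has \emph{no} hypothesis on $\Gal(f)$: it applies to every degree-$3$ polynomial without multiple roots. The Galois condition belongs to Theorem~\ref{thm:galois-group-is-s3}, not here. So the ``heavy lifting'' you attribute to Zarhin is unavailable in this generality, and you need a different mechanism.

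Second, your argument for ruling out $\Mat_2(\qq(\zeta_q))$ when $p\ge 5$, $q\neq 5,7$ is wrong. You say this would require an abelian variety of dimension $\varphi(q)/2$ with $\qq(\zeta_q)$-multiplication, ``impossible since $\varphi(q)/2 < [\qq(\zeta_q):\qq]$.'' But that is exactly what a CM abelian variety with CM by $\qq(\zeta_q)$ \emph{is}: dimension equal to half the degree of the CM field. Such varieties abound, and indeed in cases (2b), (2c) of the theorem $\Mat_2(\qq(\zeta_q))$ does occur. What actually distinguishes $p\ge 5$, $q\neq 5,7$ is arithmetic information specific to the pair $(J_{f,q}^\nw,\ii)$, not a crude dimension count.

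The paper's route, which you are missing entirely, is the \emph{generalized multiplication type} $h:(\zz/q\zz)^\times\to\nn$, $h(a)=\lfloor 3a/q\rfloor$, recording how $\qq(\zeta_q)$ acts on $\Lie(J_{f,q}^\nw)$. One first classifies the centralizer $\End^0(J_{f,q}^\nw,\ii)$ of $\qq(\zeta_q)$ in $\End^0(J_{f,q}^\nw)$: since $\dim J_{f,q}^\nw=\varphi(q)$, this centralizer sits inside $\Mat_2(\qq(\zeta_q))$ and is one of $E$, a quadratic extension $L$, $E\oplus E$, or $\Mat_2(E)$. The last is excluded (for $q\neq 4$) because some $h(a)=1$, forcing a one-dimensional eigenspace incompatible with $\Mat_2(\cc)$-action. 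In the first two cases, primitivity of $h$ (i.e.\ $h\circ\theta_s=h\Rightarrow s=1$) forces $\End^0=E$ or $L$. In the $E\oplus E$ case one writes $h=g_1+g_2$ as a sum of CM-types and determines, via a delicate combinatorial analysis of which $s\in(\zz/q\zz)^\times$ allow $g,g\circ\theta_s$ both to vanish on $[1,q/3]$, whether the two factors are simple and whether they are isogenous. This is where the exceptional behavior at $q=5,7,8,9$ and the extra possibilities for $q=2^r,3^r$ emerge---not from explicit curve-by-curve identifications as you suggest, but from the structure of $h$ alone.

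Your argument for the final commutativity assertion is essentially fine once the classification is in hand.
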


\begin{thm}\label{thm:galois-group-is-s3}
  In addition to the assumptions of Theorem~\ref{thm:main}, we assume
  that $k$ contains a primitive $q$-th root of unity $\xi_q$, and
  $f(x)$ is irreducible over $k$ with Galois group $\Gal(f)\cong
  \mathbf{S}_3$, then $J_{f,q}^\nw$ is absolutely simple. In other
  words, either (1a) or (1b) in Theorem~\ref{thm:main} holds, and
  case (2) does not appear.
\end{thm}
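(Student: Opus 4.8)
The plan is to combine Theorem~\ref{thm:main} with a rigidity argument: in each non‑simple case (2a)--(2f) the $\bar k$‑isogeny decomposition of $J_{f,q}^{\nw}$ turns out to be so constrained that it cannot be a Galois twist of the geometric situation attached to a cubic with full symmetric Galois group. Throughout write $A=J_{f,q}^{\nw}$ and let $k_f\subseteq\bar k$ be the splitting field of $f$, so that $\Gal(k_f/k)\cong\mathbf{S}_3$.

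The first step is to isolate what the hypotheses buy us. Since $\xi_q\in k$, the automorphism $\delta_q$, hence the whole subfield $\ii(\qq(\zeta_q))$ of $\End^0(A)$, is already defined over $k$; thus $\Gal(\bar k/k)$ acts on $\End^0_{\bar k}(A)$ and on $H^1(A)$ while fixing $\ii(\qq(\zeta_q))$ pointwise, in particular fixing the distinguished embedding $\qq(\zeta_q)\hookrightarrow\cc$. Next, using $\Gal(f)\cong\mathbf{S}_3$ one proves, in the spirit of Zarhin's analysis \cite{MR2349666}, that $\End^0_k(A)=\qq(\zeta_q)$: any endomorphism of $C_{f,q}$ beyond $\delta_q$ comes from the mutual positions of the three roots of $f$ as branch points of $C_{f,q}\to\pp^1$, and is controlled by their cross‑ratio, which lies in no proper invariant of $k$ once the Galois group is all of $\mathbf{S}_3$; so no endomorphism outside $\qq(\zeta_q)$ can be $k$‑rational. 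Finally one records that the field of definition of $\End^0_{\bar k}(A)$ is contained in $k_f$, so that the $\Gal(\bar k/k)$‑action on $\End^0_{\bar k}(A)$, and on the set of $\bar k$‑isogeny factors of $A$, factors through $\Gal(k_f/k)\cong\mathbf{S}_3$.

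Now assume $A$ is not absolutely simple; by Theorem~\ref{thm:main} we are in one of (2a)--(2f). In each case, the explicit shape of $\End^0_{\bar k}(A)$ together with the Hodge/Chevalley--Weil signature data of the $\qq(\zeta_q)$‑action on $H^1(A)$ forces $A_{\bar k}$ to be isogenous to a product $\prod_i B_i^{n_i}$ of abelian varieties \emph{of CM type}, and one checks case by case that the centre $F_i$ of each Wedderburn block of $\End^0_{\bar k}(A)$ embeds into $\ii(\qq(\zeta_q))$ (for instance $F_i$ is $\qq(\sqrt{-7})$ or $\qq(\zeta_7)$ in (2d), $\qq(\sqrt{-1})$ or $\qq(\sqrt{-2})$ in (2e), $\qq(\zeta_q-\zeta_q^{-1})$ or $\qq(\zeta_q)$ in (2f), and $\qq(\zeta_q)$ otherwise), hence is fixed pointwise by $\Gal(\bar k/k)$. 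Because each $F_i/\qq$ is abelian, fixing $F_i$ pointwise leaves the CM type of $B_i$ unchanged under conjugation, so $B_i^{\sigma}$ is isogenous to $B_i$ for every $\sigma\in\Gal(\bar k/k)$. Therefore $\Gal(\bar k/k)$ preserves each isotypic component of $A_{\bar k}$, i.e. fixes every central idempotent of $\End^0_{\bar k}(A)$; but then those idempotents lie in $\End^0_k(A)=\qq(\zeta_q)$, a field, which is impossible as soon as $\End^0_{\bar k}(A)$ has more than one block. This eliminates (2a), the first options of (2b) and (2f), and (2d) and (2e).

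What survives is $\End^0_{\bar k}(A)\cong\Mat_2(\qq(\zeta_q))$, occurring only for $q=3^r\ge 27$ (second option of (2b)) or $q\in\{4,5,9\}$ (case (2c)); here $A_{\bar k}\sim B^2$ with $B$ simple of CM type, and by Skolem--Noether $\Gal(\bar k/k)$ acts on $\Mat_2(\qq(\zeta_q))$ through $\mathrm{PGL}_2(\qq(\zeta_q))$ with commutant exactly the scalars $\qq(\zeta_q)$ (since $\End^0_k(A)=\qq(\zeta_q)$), so, the action factoring through $\Gal(k_f/k)\cong\mathbf{S}_3$, its image is forced to be a copy of $\mathbf{S}_3$ acting through its $2$‑dimensional irreducible representation. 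Excluding this is the crux: one must use the explicit geometry of $C_{f,q}$ for these special $q$ — the extra cyclic cover $C_{f,q}\to C_{f,q/p}$ and the resulting relation between $A$ and the lower pieces — to show that $\End^0_{\bar k}(A)\cong\Mat_2(\qq(\zeta_q))$ can occur only when the roots of $f$ satisfy an affine relation (e.g. $f$ becomes reducible after a $k$‑rational change of variable, or $\{a_1,a_2,a_3,\infty\}$ admits a nontrivial automorphism fixing $\infty$), contradicting $\Gal(f)\cong\mathbf{S}_3$. I expect this matrix‑algebra case, together with the two structural inputs of the second paragraph ($\End^0_k(A)=\qq(\zeta_q)$ and the descent of $\End^0_{\bar k}(A)$ to $k_f$), to be the main obstacle; the CM‑type rigidity argument of the third paragraph, by contrast, is uniform and should go through cleanly.
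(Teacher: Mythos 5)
Your approach is genuinely different from the paper's, and it has an explicit gap that you yourself flag.

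The paper's proof is structured around a different object: it shows directly that the centralizer $\End^0(J_{f,q}^\nw, \ii)$ of $\ii(\qq(\zeta_q))$ inside $\End^0(J_{f,q}^\nw)$ is \emph{simple as an $E$-algebra}, using the fact that $\mathbf{S}_3$ acts doubly transitively on the branch points and that $\qq(\zeta_q)\subseteq k$ (this invokes \cite[Theorem~5.13]{MR2349666} and \cite[Lemma~3.8, Theorem~3.12]{MR2471095}, which play the role of your unproven assertion that $\End^0_k(A)=\qq(\zeta_q)$ and your ``descent to $k_f$''). This rules out $E\oplus E$ in one stroke. The remaining simple possibility $\Mat_2(E)$ is then excluded by Proposition~\ref{prop:exist-value-one}, which produces $a\in\umod q$ with $h(a)=1$, incompatible with a $\Mat_2(\cc)$-module structure on $\Lie_\cc(J)_a$; this is exactly Lemma~\ref{lem:centralizer-is-not-matrix-algebra}. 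So the centralizer is $E$ or a quadratic CM extension $L$, and Propositions~\ref{prop:centeralizer-is-E} and~\ref{prop:centralizer-is-quadratic-extension} (primitivity of $h$) finish. You instead take the classification of Theorem~\ref{thm:main} as given and run Galois descent on the decomposition of $J_{\bar k}$. Your third-paragraph argument is sound: each Wedderburn center $F_i$ lies inside $\qq(\zeta_q)\subseteq k$, is abelian over $\qq$, so $\Gal(\bar k/k)$ acts trivially on $\Hom(F_i,\bar k)$, preserves each CM type, hence fixes each isotypic block and its central idempotent; since $\End^0_k(A)$ is a field, at most one block can occur. This cleanly eliminates every multi-block case, including the second alternative of (2f), which you omit from your list but which is covered by the same reasoning.

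The genuine gap is the one you name yourself: the single-block case $\End^0_{\bar k}(A)\cong\Mat_2(\qq(\zeta_q))$ (second option of (2b) and case (2c)). Your descent argument gives no traction there — there is only one central idempotent, so no contradiction with $\End^0_k(A)$ being a field — and the Skolem--Noether reduction to a representation $\mathbf{S}_3\to\PGL_2(\qq(\zeta_q))$ is left open. But this case is precisely where the paper's route wins: $\End^0_{\bar k}(A)\cong\Mat_2(E)$ forces $\End^0(A,\ii)=E\oplus E$ (since the two $\qq(\zeta_q)$-factors $Y_1\sim Y_2$ carry distinct CM types, the centralizer of $\ii(E)$ is the diagonal torus, not all of $\Mat_2$), and $E\oplus E$ is exactly what the simplicity argument via Zarhin's doubly-transitivity machinery rules out. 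In short: the paper's proof operates one level down, on the centralizer algebra, where ``simple'' already kills the $E\oplus E$ case and hence the hidden $\Mat_2(E)$ case; your proof operates on $\End^0_{\bar k}$ itself and is missing the ingredient to close the one-block matrix case. You would also need to substantiate $\End^0_k(A)=\qq(\zeta_q)$ and the containment of the field of definition of $\End^0_{\bar k}(A)$ in $k_f$; both are plausible and in the end are consequences of the same Zarhin results the paper cites, but as written they are unsupported assertions.
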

The proofs of Theorem~\ref{thm:main} and
Theorem~\ref{thm:galois-group-is-s3} will be given in
Section~\ref{sec:compl-abel-vari}. 
\begin{rem}
  When $q=3$, $C_{f,3}$ has genus 1, and $J_{f,3}^\nw=J(C_{f,3})$ is
  an elliptic curve with $\End_{\bar{k}}(J(C_{f,3}))\supseteq \zz[\zeta_3]$, the
  maximal order in the imaginary quadratic field
  $\qq(\zeta_3)$. Therefore, $\End_{\bar{k}}(J(C_{f,3}))=\zz[\zeta_3]$. Since
  the class number of $\qq(\zeta_3)$ is one, $J(C_{f,3})$ is
  isomorphic over $\bar{k}$ to the elliptic curve $y^2=
  x^3+1$. 
\end{rem}
\begin{rem}
  If $q=4$, then
  $\End^0(J_{f,4}^\nw)=\Mat_2(\qq(\sqrt{-1}))$ for all $f(x)\in k[x]$ of
  degree 3 with no multiple roots. In other words, $J_{f,4}^\nw$ is
  isogenous to the square of elliptic curves $y^2=x^3-x$. This result
  was first proven by J.W.S. Cassels \cite{MR807702}, and an explicit
  construction of the isogeny is given by J. Gu{\`a}rdia in \cite{MR1853452}. 
\end{rem}

\begin{cor}\label{cor:automorphism}
  Let the assumptions be the same as Theorem~\ref{thm:main}. We
  further assume that $p>7$ and $f(x)$ is a monic polynomial. With a
  unique change of variable of the form $x\mapsto x-b$ for a suitable
  $b\in k$, we may assume that $f(x)= x^3+B_0x +C_0$.
  \begin{enumerate}
  \item If $B_0=0$, then $\Aut_{\bar{k}}(C_{f,q})\cong \zmod{3q}$;
  \item If $C_0=0$, then $\Aut_{\bar{k}}(C_{f,q})\cong \zmod{2q}$;
  \item $\Aut_{\bar{k}}(C_{f,q})\cong \zmod{q}$ otherwise.
  \end{enumerate}
In particular, if $f(x)=x(x-1)(x-\lambda)$, then
$\Aut_{\bar{k}}(C_{\lambda,q})\cong\zmod{3q}$ if and only if $\lambda =
(1\pm \sqrt{-3})/2$, and $\Aut_{\bar{k}}(C_{\lambda,q})\cong\zmod{2q}$ if
and only if $\lambda\in \{ -1, 2, 1/2\}$. There are only finitely many
$\lambda\in \cc$ such that the curve $C_{\lambda,q}$ has extra
automorphisms. 
\end{cor}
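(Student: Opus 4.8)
The plan is to reduce everything to a clean computation with the cyclic subgroup $\langle\delta_q\rangle$. First I record that, since $p>7$, we have $p\ge 11$ and $q=p^r\ge 11$, so by the dimension formula for $J_{f,N}^{\nw}$ and the isogeny \eqref{eq:11} recalled in the introduction, the genus of $C:=C_{f,q}$ is $\dim J(C_{f,q})=\sum_{i=1}^{r}\varphi(p^i)=q-1\ge 10$; in particular $g\ge 2$. After the unique substitution $x\mapsto x-b$ we may assume $f(x)=x^3+B_0x+C_0$; since $f$ has no multiple root, $(B_0,C_0)\neq(0,0)$, and moreover $B_0=0$ forces $C_0\neq 0$ (the discriminant of $x^3+C_0$ is $-27C_0^2$) while $C_0=0$ forces $B_0\neq 0$. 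The automorphism $\delta_q\colon(x,y)\mapsto(x,\xi_q y)$ gives $\langle\delta_q\rangle\cong\zmod q\hookrightarrow\Aut_{\bar k}(C)$, and the task is to show there are essentially no further automorphisms.

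The key structural input is that $\Aut_{\bar k}(C)$ is abelian: for $g\ge 2$ the natural map $\Aut_{\bar k}(C)\to\Aut_{\bar k}(J(C))\subseteq\End^0(J(C))^{\times}$ is injective by Torelli's theorem (regardless of whether $C$ is hyperelliptic), and $\End^0(J(C_{f,q}))$ is commutative by the last assertion of Theorem~\ref{thm:main}. Hence $\langle\delta_q\rangle$ is central in $\Aut_{\bar k}(C)$, so any $\sigma\in\Aut_{\bar k}(C)$ stabilizes the fixed field $\bar k(x)=\bar k(C)^{\langle\delta_q\rangle}$ and commutes with $\delta_q$. Writing $\sigma(x)=(ax+b)/(cx+d)$ with $ad-bc\neq 0$, and using $\delta_q(\sigma(y))=\sigma(\delta_q(y))=\xi_q\,\sigma(y)$ together with the $\delta_q$-eigenspace decomposition $\bar k(C)=\bigoplus_{j=0}^{q-1}\bar k(x)\,y^{j}$, one gets $\sigma(y)=g(x)\,y$ for a unique $g(x)\in\bar k(x)^{\times}$. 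Raising to the $q$-th power yields the identity $f(\sigma(x))/f(x)=g(x)^{q}$ in $\bar k(x)$.

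Next I would run a short divisor/degree argument on $f(\sigma(x))/f(x)=g(x)^q$. Its divisor on $\mathbb P^1$ is supported on $\{e_1,e_2,e_3,\infty\}$ (the roots of $f$ and $\infty$) together with their images under $x\mapsto\sigma(x)^{-1}$, and since $f(\sigma(x))$ writes as $P(x)/(cx+d)^3$ with $\deg P\le 3$, every local multiplicity has absolute value $\le 6<q$; being a $q$-th power the function must therefore be constant, so $f(\sigma(x))=\gamma^{q}f(x)$ for some $\gamma\in\bar k^{\times}$. Comparing degrees forces $c=0$ (if $c\neq0$ then $\gamma^{q}f(x)(cx+d)^3$ has degree $6$ while the numerator $P(x)$ has degree $\le 3$), so $\sigma(x)=\alpha x+\beta$ is affine; the vanishing of the $x^2$-coefficient in $f(\alpha x+\beta)=\gamma^{q}f(x)$ gives $\beta=0$, and comparing the remaining coefficients in $f(\alpha x)=\alpha^{3}f(x)$ gives $B_0(\alpha^2-1)=0$ and $C_0(\alpha^3-1)=0$. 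The assignment $\sigma\mapsto\alpha$ is then a homomorphism $\Aut_{\bar k}(C)\to\bar k^{\times}$ with kernel exactly $\langle\delta_q\rangle$, whose image is contained in $\{\alpha:\alpha^3=1\}$ if $B_0=0$, in $\{\pm1\}$ if $C_0=0$, and is trivial otherwise. Exhibiting the explicit lifts $\rho\colon(x,y)\mapsto(\zeta_3 x,y)$ of order $3$ when $B_0=0$ and $\tau\colon(x,y)\mapsto(-x,-y)$ of order $2$ when $C_0=0$ (using $q$ odd so that $(-y)^q=-y^q=f(-x)$), both commuting with $\delta_q$, and invoking $\gcd(3,q)=\gcd(2,q)=1$, we obtain $\Aut_{\bar k}(C)\cong\zmod{3q}$, $\zmod{2q}$, $\zmod q$ in the three cases. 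The step that needs the most care is this divisor/degree bookkeeping at $\infty$ and at the possibly coinciding preimages of the $e_i$, but the crude bound $q\ge 11>6$ on all multiplicities makes it automatic.

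Finally, for $f(x)=x(x-1)(x-\lambda)$ one passes to the depressed form by the shift $x\mapsto x+\bar r$ with $\bar r=(1+\lambda)/3$: the roots of the resulting cubic are $r_i-\bar r$ for $\{r_i\}=\{0,1,\lambda\}$, so $C_0=0$ holds precisely when some $r_i-\bar r=0$, i.e.\ $\bar r\in\{0,1,\lambda\}$, giving $\lambda\in\{-1,2,1/2\}$; and $B_0=-\frac12\sum_i(r_i-\bar r)^2=\frac12\bigl(3\bar r^2-\sum_i r_i^2\bigr)$ vanishes precisely when $1+\lambda^2=(1+\lambda)^2/3$, i.e.\ $\lambda^2-\lambda+1=0$, i.e.\ $\lambda=(1\pm\sqrt{-3})/2$. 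Since only these finitely many values of $\lambda$ produce an extra automorphism, every other $\lambda$ gives $\Aut_{\bar k}(C_{\lambda,q})\cong\zmod q$, which completes the corollary.
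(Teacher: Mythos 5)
Your proof is correct, and while it shares the paper's opening move, it replaces the paper's key lemma with a genuinely different argument. The paper, like you, deduces from Theorem~\ref{thm:main} (commutativity of $\End^0(J(C_{f,q}))$ for $p>7$) and Torelli that $\Aut_{\bar k}(C_{f,q})$ is abelian and hence equals the normalizer of $\langle\delta_q\rangle$; it then invokes Theorem~\ref{thm:normalizer-automorphism}, whose proof (Subsections 4.1--4.4) shows that any $\phi$ in the normalizer must fix $\infty$ by a linear-equivalence computation on $C_{f,N}$ itself, using \cite[Lemma 2.7]{xue_JNT} to analyze degree-zero divisors supported on the ramification points. You avoid that machinery entirely: after descending to $\bar k(x)$ and using the $\delta_q$-eigenspace decomposition of $\bar k(C)$ to write $\sigma(y)=g(x)y$, you observe that $f(\sigma(x))/f(x)=g(x)^q$ has all local multiplicities at most $6$ in absolute value, which is $<q$, forcing it to be a nonzero constant; the degree comparison then pins $\sigma(x)$ to be affine, and the rest is coefficient-matching. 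This is a cleaner, more self-contained route for the corollary's hypothesis $p>7$, since $q\ge 11$ makes the crude bound automatic; the trade-off is that the paper's Theorem~\ref{thm:normalizer-automorphism} applies to all $N$ with $\gcd(N,3)=1$ and $N\neq 2,4$, whereas your divisor-degree bound as stated needs $q>6$. The final verifications (the homomorphism $\sigma\mapsto\alpha$ with kernel $\langle\delta_q\rangle$, the explicit lifts $\rho$ and $\tau$, the splitting of the extension by coprimality, and the computation of $B_0$ and $C_0$ after the shift $x\mapsto x+(1+\lambda)/3$) all match what the paper records in Theorem~\ref{thm:normalizer-automorphism} and Subsection~\ref{subsec:lambda}.
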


\begin{rem}
  We will also obtain some results for a general $N$ with $\gcd(N,3)=1$.
  For example, if  there exists a quadratic field extension
  $L/\qq(\zeta_N)$ such that
  \[\End^0(J_{f,N}^\nw)\supseteq L \supseteq \qq(\zeta_N),\]
  and $N\not\in \{4, 10\}$, then it is shown  in
  Corollary~\ref{cor:nw-factor-of-jacobian-quad-ext} that
  $\End^0(J_{f,N}^\nw)$ coincides with $L$, and $J_{f,N}^\nw$ is
  absolutely simple.

\end{rem}

\begin{exs}
  Here are some examples of $f(x)$ which give rise to the endomorphism
  algebras in Theorem~\ref{thm:main}. Most of the proofs will be given
  in Section~\ref{sec:autom-constr-exampl}.
  \begin{enumerate}
  \item[(1a)$\phantom{'}$] Suppose that $k=\bar{\qq}(t)$, the rational
    function field of transcendental degree 1 over $\bar{\qq}$, and
    $f(x)=x^3-x-t\in k[x]$. By Example 2.3 and Theorem 5.18 of
    \cite{MR2349666}, $\End_{\bar{k}}(J_{f,q}^\nw)\cong \zz[\zeta_q]$
    for any prime power $q=p^r\neq 4$.
\item[(1b)$\phantom{'}$] If $f(x)=x^3+1$ and $3\nmid N$, then
  $\End^0(J_{f,N}^\nw)\cong 
  \qq(\zeta_{3N})$. 
\item[(1b')] If $f(x)=x^3-x$ and $N$ is even and coprime to 3, then
  $\End^0(J_{f,N}^\nw)\cong \qq(\zeta_{2N})$.
\item[(2a)$\phantom{'}$] If $f(x)=x^3+x$ and $p\geq 5$, and $q\neq 5, 7$, then
  $\End^0(J_{f,q}^\nw)\cong \qq(\zeta_q)\oplus \qq(\zeta_q)$.
\item[(2b)$\phantom{'}$] If $f(x)=x^3+1$ and $q=3^r\geq 9$, then
  $\End^0(J_{f,q}^\nw)\cong 
  \Mat_2(\qq(\zeta_q))$. 
\item[(2b')] If $f(x)=x^3-x$ and $q=3^r\geq 27$, then
  $\End^0(J_{f,q}^\nw)\cong 
  \qq(\zeta_q)\oplus \qq(\zeta_q)$. 
\item[(2c)$\phantom{'}$] If $f(x)=x^3-x$ and $q=5, 9$, then $\End^0(J_{f,q}^\nw)\cong 
  \Mat_2(\qq(\zeta_q))$. 
\item[(2c')] If $f(x)=x^3+1$ and $q=9$, then $\End^0(J_{f,9}^\nw)\cong 
  \Mat_2(\qq(\zeta_9))$. 
\item[(2d)$\phantom{'}$] If $f(x)=x^3-x$ and $q=7$, then $\End^0(J_{f,7}^\nw)\cong \Mat_3(\qq(\sqrt{-7}))\oplus
       \qq(\zeta_7)$.
  \end{enumerate}
\end{exs}

\begin{rem}
  The examples show that our classification in Theorem~\ref{thm:main}
  is complete for those $q=p^r$ with $p\geq 5$, in the sense that
  there are examples for each case listed in the theorem for those
  $q$. However, if $q=3^r$, we have yet to find examples where
  $\End^0(J_{f,q}^\nw)$ is quadratic extension of $\qq(\zeta_{3^r})$
  (case (1b)); if $q=2^r$, we don't have examples for which
  $J_{f,q}^\nw$ is not simple (case (2e), (2f)).  The remaining cases
  when $q$ is a power of $2$ or $3$ are again supported by examples.
\end{rem}

The paper is organized as follows. In
Section~\ref{sec:superelliptic-curves}, we study superelliptic curves
$C_{f,N}$ and define the subvariety $J_{f,N}^\nw$ of $J(C_{f,N})$. In
Section~\ref{sec:compl-abel-vari}, we show how the information
extracted from the study of $C_{f,N}$ is used in classifying
$\End^0(J_{f,N}^\nw)$. Certain arithmetic results needed there are
postponed to Section~\ref{sec:arithmetic-results}. In
Section~\ref{sec:autom-constr-exampl}, we study the automorphism group
of $C_{f,N}$ and construct examples with the given endomorphism
algebra in our classification.

\textbf{Acknowledgment:} Major parts of the work was completed when
the first named author was a post-doc at National Center for Theoretic
Science (NCTS), Hsinchu, Taiwan. He would like to express his
gratitude to the generous support of NCTS. He was partially supported
by the grant NSC 101-2811-M-001-078.
The second named author was partially supported by the grants 
NSC 100-2628-M-001-006-MY4 and AS-98-CDA-M01.

\section{superelliptic curve and its
  Jacobian variety}\label{sec:superelliptic-curves}

The goal of this section is to define the abelian subvariety
$J_{f,N}^\nw$ and study its basic properties. Let $k$ be a field with
characteristic coprime to $N$.  We assume that $k$ contains a
primitive $N$-th root of unity $\xi_N$. Then $\xi_D:=(\xi_N)^{N/D}$ is
a primitive $D$-th root of unity for each $D\mid N$. In the latter
half of the section, we will restrict to the case that $k$ has
characteristic zero.


\begin{sect} \label{subsec:general-results-about-curves} We first
  recall some basic fact about curves and their Jacobian
  varieties. Let $X$ be a curve over $k$ of genus $g\geq 2$, and
  $\Aut_k(X)$ its automorphism group over $k$. It is well known that
  $\Aut_{\bar{k}}(X)$ (and hence $\Aut_k(X)$) is finite if $g\geq 2$
  (\cite[Exercise IV.5.2]{MR0463157},\cite[Chapter 11]{MR2386879}). By
  Albanese functoriality, each $\delta\in \Aut_k(X)$ induces an
  automorphism of the Jacobian variety $J(X)$, which is still denoted
  by $\delta$ by an abuse of notation. Torelli's theorem
  (\cite[Section 12]{MR861976}) implies that the homomorphism
  $\Aut_k(X)\to \Aut_k(J(X))$ thus obtained is an embedding. This
  gives rise to a homomorphism from the group ring $\zz[\Aut_k(X)]$ to
  the endomorphism ring of $J(X)$:
  \begin{equation}
    \label{eq:16}
    \zz[\Aut_k(X)]\to \End_k(J(X))\subseteq  \End_{\bar{k}}(J(X)). 
  \end{equation}
 
  Let $\pi: X\to Y$ be a separable map of curves of degree $m$.  It
  induces two morphisms of the Jacobians:
  \begin{align*}
    \pi^*: J(Y)&\to J(X) \qquad \text{ by Picard functoriality};\\
    \pi: J(X)&\to J(Y) \qquad \text{ by  Albanese functoriality}.
  \end{align*}
  Moreover, $\pi\circ \pi^*= m_{J(Y)}$. So $\ker \pi^*\leq J(Y)[m]$,
  the $m$-torsions of $J(Y)$. In particular, $J(Y)$ is isogenous to
  its image $\pi^*J(Y)$:
  \begin{equation}
    \label{eq:15}
    J(Y)\sim \pi^*J(Y)\subseteq J(X). 
  \end{equation}

  Let $G\leq \Aut_k(X)$ be a subgroup of order $m$, and $Y:=X/G$ be
  the quotient curve. The quotient map $\pi: X\to Y$ is separable and
  finite of degree $m$.  We have 
\begin{equation}
  \label{eq:17}
  \big(\sum_{g\in G} g\big) J(X)= \pi^*J(Y).
\end{equation}
In particular, if $Y\cong \pp^1$, then $\sum_{g\in G}
g=0\in \End(J(X))$. 

\end{sect}

\begin{sect}

  Suppose that $f(x)\in k[x]$ is a polynomial of degree $n\geq 3$ with
  factorization $a_0\prod_{i=1}^s(x-\alpha_i)^{m_i}$ in $\bar{k}[x]$,
  and $\gcd(N, m_1, \cdots, m_s)=1$. Let $C_{f,N}$ be the curve
  defined by $y^N=f(x)$ and $J(C_{f,N})$ the Jacobian variety of
  $C_{f,N}$. The map
\begin{equation}
  \label{eq:cyclic_cover}
  \pi : C_{f,N}\to \pp^1,\qquad (x,y)\to x
\end{equation}
realizes $C_{f,N}$ as a (ramified) cyclic cover of $\pp^1$ with
covering group $G:=\Aut(\pi)\cong \zmod{N}$. A generator of
$G$ is given by
\[ \delta_N: C_{f,N}\to C_{f, N}, \qquad (x,y)\mapsto (x, \xi_N
y).\] 

Let $H_D$ be the subgroup of $G$ of index $D$. Then the quotient curve
$C_{f,N}/H_D$ is isomorphic to $C_{f, D}$ with quotient map
\begin{equation}
  \label{eq:quotient}
 \pi_D: C_{f,N}\to C_{f, D}, \qquad (x,y)\mapsto (x, y^{N/D}).  
\end{equation}
\end{sect}

\begin{defn}\label{defn:def-of-new-part}
Following \cite[Definition 5.1]{MR1708603}, we call the abelian
subvariety 
\[ J_{f,N}^\old: = \sum_{D\mid N, D\neq N} \pi_D^* J(C_{f,D}) \] the
\textit{old part} of $J(C_{f,N})$, and its orthogonal complement (with
respect to the canonical polarization) the \textit{new part}
$J_{f,N}^\nw$.  If $N=p$ is a prime, then $J_{f,p}^\nw:=J(C_{f,p})$. 
\end{defn}
\begin{rem}
  If $k=\cc$, $J_{f,N}^\nw$ can also be defined as the complex torus
  given by a period lattice which is obtained by integration on
  $C_{f,N}$. For example, in the case of $C_{f,N}$ is hypergeometric,
  this construction is carried out in \cite[Section 3]{MR1075639}. 
\end{rem}

\begin{sect}
  By \cite[Corollary 5.4]{MR1708603}, 
  \begin{equation}
    \label{eq:14}
\dim J_{f,N}^\nw= \varphi(N)(\abs{R}-2)/2,    
  \end{equation}
  where \[R=\{ P \in \pp^1(\bar{k})\mid \pi: C_{f,N}\to \pp^1 \text{
    is ramified over } P\}.\] Assume that
  $f(x)=a_0\prod_{i=1}^n(x-\alpha_i)$ has no multiple roots. We have
  two cases:
  \begin{itemize}
  \item if $N\nmid n$, then $R=\{\alpha_i\}_{i=1}^n\cup \{ \infty\}$,
    so $\dim J_{f,N}^\nw= \varphi(N)(n-1)/2$;
\item otherwise, $N\mid n$, then $R=\{\alpha_i\}_{i=1}^n$, so $\dim
  J_{f,N}^\nw= \varphi(N)(n-2)/2$.
  \end{itemize}
  Indeed, in the case $n=Nb$, a simple change of variable of the form
  $u=1/(x-\alpha_1)$, $v=y/(x-\alpha_1)^b$ establishes a birational
  isomorphism between $C_{f,N}$ and $C_{g,N}$ over $k':=k(\alpha_1)$,
  where $g(x)\in k'[x]$ is of degree $n-1$ without multiple roots (\cite[Remark 4.3]{MR2166091}).
\end{sect}

\begin{sect}\label{subsec:endormorphism-algebra-contain-cyclotoic-field}
  We show that there exists a natural embedding $\ii:
  \zz[\zeta_N]\hookrightarrow \End_k(J_{f,N}^\nw)$. The main idea of
  the proof is already contained in \cite[Lemma 5.2]{MR1708603}. It is
  included here since the construction is needed for
  Subsection~\ref{subsec:complete-decomposition-isogeny}.  Since $G=
  \zmod{N}$ is commutative,
  \begin{equation}
    \label{eq:group_ring}
 \cc[G]\cong \bigoplus_{\chi\in \widehat{G}(\cc)} \cc_\chi,     
  \end{equation}
  where $\widehat{G}(\cc):=\{\chi: G \to \cc^\times\}$ is the
  ($\cc$-valued) character group of $G$, and $\cc_\chi:=\cc$ with the
  projection map $\cc[G]\to \cc_\chi$ given by $g\mapsto \chi(g)$ for
  all $g\in G$. Let $\chi_N$ be the generator of $\widehat{G}(\cc)$
  with $\chi_N(\delta_N)=\zeta_N:=e^{2\pi i/N}$.  For simplicity, we
  write $\cc_a$ for $\cc_{\chi_N^a}$ for each $a\in \zmod{N}$. Let
  $\epsilon_a\in \cc[G]$ be the element associated with the character
  $(\chi_N)^a$:
\begin{equation}
  \label{eq:idempotent}
  \epsilon_a=\epsilon_{\chi_N^a}:=\frac{1}{N}\sum_{i=0}^{N-1}
   \chi_N(\delta_N)^{ai}\delta_N^{N-i}=\frac{1}{N}\sum_{i=0}^{N-1}
   \zeta_N^{ai}\delta_N^{N-i}\in \cc[G].
\end{equation}
The orthogonality of characters implies that $\epsilon_a$ is mapped to
the primitive idempotent $(0,\cdots, 0, 1,0\cdots, 0)\in
\cc_a$ on the right hand side of
(\ref{eq:group_ring}). Therefore, $\{\epsilon_a\}_{a\in \zmod{N}}$ is
a complete set of primitive pairwise orthogonal idempotents of
$\cc[G]$. For each $D\in \nn$, let $\Phi_D(T)\in \zz[T]$ be the $D$-th
cyclotomic polynomial. We have
\begin{equation}
  \label{eq:cyclotomic_factorization}
  \qq[G]\cong \qq[T]/(T^N-1)\cong \bigoplus_{D\mid N}
  \qq[T]/(\Phi_D(T)),
\end{equation}
and
\begin{equation}
  \label{eq:D-cyclotomic-factor-over-cc}
  \left(\qq[T]/(\Phi_D(T))\right)\otimes_\qq \cc \cong \bigoplus_{a\in
    \umod{D}} \cc_{\chi_D^a}, \qquad \text{where } \chi_D=\chi_N^{N/D}.
\end{equation}
Let  $\zeta_D:=\zeta_N^{N/D}=e^{2\pi i/D}$, and 
\begin{equation}
  \label{eq:21}
 \eta_D:=\sum_{a\in \umod{D}}
\epsilon_{\chi_D^a}=\frac{1}{N}\sum_{i=0}^{N-1}
(\Tr_{\qq(\zeta_D)/\qq}\zeta_D^i) \delta_N^{N-i}\in \qq[G].
\end{equation}
Then $\eta_D$ is the primitive idempotent in $\qq[G]$ corresponding to
the factor $\qq[T]/(\Phi_D(T))$.  In particular,
$\Phi_D(\delta_N)\eta_D=0$.  Clearly, $N\eta_N\in \zz[G]$ and
\cite[Corollary 5.3]{MR1708603} showed that
\begin{equation}
  \label{eq:12}
  J_{f,N}^\nw= (N\eta_N) J(C_{f,N}),\qquad J_{f,N}^\old= N(1-\eta_N)J(C_{f,N}),
\end{equation}
and
\begin{equation}
  \label{eq:20}
  J(C_{f, N})\sim J_{f,N}^\old\times J_{f,N}^\nw. 
\end{equation}
Since $\delta_N$ commutes with $\eta_N$, $G$ acts on $J_{f,N}^\nw$ as
well . So \[\Phi_N(\delta_N) J_{f,N}^\nw = N\Phi_N(\delta_N)\eta_N
J(C_{f,N})=0.\] Therefore, we have a natural embedding
\begin{equation}
  \label{eq:13}
\ii:   \zz[\zeta_N]\hookrightarrow \End_k(J_{f,N}^\nw),\qquad \zeta_N\mapsto \delta_N\mid_{J_{f,N}^\nw}.
 \end{equation}
Similarly, one sees that $(N\eta_D)J(C_{f,N})$ is $G$-invariant
for all $D\mid N$. 
\end{sect}




\begin{sect}\label{subsec:complete-decomposition-isogeny}
  The isogeny in (\ref{eq:20}) can be refined further.  Recall that
  $\{\eta_D\}_{D\mid N}$ form a complete set of primitive pairwise
  orthogonal basis for $\qq[G]$. By the remark below (\ref{eq:17}),
  $(N\eta_1)J(C_{f,N})=0$. So
\begin{equation}
  J(C_{f,N}) \sim \prod_{D\mid N, D\neq 1} (N^2\eta_D)J(C_{f,N}). 
\end{equation}
For each $D\mid
  N$, let \[\tilde{\epsilon}_D:=\frac{1}{\abs{H_D}}\sum_{h\in H_D}
  h=\frac{D}{N}\sum_{h\in H_D} h\in \qq[G].\] By (\ref{eq:17}),
  $(N/D)\tilde{\epsilon}_DJ(C_{f,N})= \pi_D^*J(C_{f,D})$. It was shown
  in \cite[Lemma~5.2]{MR1708603} that $\tilde{\epsilon}_D=\sum_{a\in
    \zmod{D}} \epsilon_{\chi_D^a}$. In particular, $\eta_D
  \tilde{\epsilon}_D =\eta_D$.

Clearly, we have the following commutative diagram of morphism of
curves: 
\begin{equation}
  \label{eq:18}
  \begin{CD}
    C_{f,N}@>{\pi_D}>> C_{f,D}\\
    @V{\delta_N}VV  @VV{\delta_D}V\\
    C_{f,N}@>{\pi_D}>> C_{f,D}\\
  \end{CD}
\end{equation}
So $\pi_D^* \delta_D^*=\delta_N^* \pi_D^*$. Note that
$\delta_N^*=\delta_N^{-1}$, and similarly for $\delta_D$. Hence
\begin{equation}
  \label{eq:19}
  \delta_N \pi_D^*= \pi_D^* \delta_D. 
\end{equation}
It follows that 
\[
\begin{split}
(N^2\eta_D)J(C_{f,N})&= D(N\eta_D)\cdot (N/D)\tilde{\epsilon}_DJ(C_{f,N})=
D(N\eta_D)\pi_D^*J(C_{f,D})\\ 
&=D \left(\sum_{i=0}^{N-1}
(\Tr_{\qq(\zeta_D)/\qq}\zeta_D^i) \delta_N^{N-i}
\right)\pi_D^*J(C_{f,D})\qquad \text{ by (\ref{eq:21}),}\\
 &=D (N/D)\pi_D^*\left(\sum_{i=0}^{D-1}\Tr_{\qq(\zeta_D)/\qq}\zeta_D^i)
   \delta_D^{D-i}\right)J(C_{f,D})   \quad \text{ by (\ref{eq:19}),}\\
&= \pi_D^*J_{f,D}^\nw\sim J_{f,D}^\nw  \quad \text{ by (\ref{eq:15})}. 
\end{split}
 \]
Therefore, 
\begin{equation}
  \label{eq:22}
  J(C_{f,N}) \sim
  \prod_{D\mid N, D\neq 1} J_{f,D}^\nw. 
\end{equation}
This generalizes \cite[Corollary 4.12]{MR2166091}. 
\end{sect}
\begin{rem}
  More generally, let $\pi: C \to C'$ be a cyclic cover of curves with
  covering group $G=\zmod{N}$ (cf. \cite[Definition
  5.1]{MR1708603}). The constructions in
  Subsections~\ref{subsec:endormorphism-algebra-contain-cyclotoic-field}
  and \ref{subsec:complete-decomposition-isogeny} apply without much
  changes.  We see that $\qq(\zeta_N)\hookrightarrow\End(J_C^\nw) $,
  and
\[ J(C)\sim \prod_{D\mid N, D\neq 1}J_{C/H_D}^\nw \times J(C'), \]
since $(N\eta_1)J(C)=\pi^*J(C')$. 
\end{rem}
\begin{sect}
  Let $X$ be a smooth projective curves over $k$. We write $\lambda_X:
  J(X)\to J(X)^\vee$ for the canonical polarization of $J(X)$. It is
  well known that $\lambda_X$ is an isomorphism. Let $\pi: X\to Y$ be
  a morphism of curves, and $\pi: J(X)\to J(Y)$ and $\pi^*: J(Y)\to
  J(X)$ be the induced morphisms of the Jacobians as in
  Subsection~\ref{subsec:general-results-about-curves}. We write
  $\pi^\vee: J(Y)^\vee \to J(X)^\vee$ for the dual homomorphism of
  $\pi$. Then there is a commutative diagram:
  \begin{equation}
    \label{eq:CD-polarization}
    \begin{CD}
    J(Y)@>{\pi^*}>> J(X)\\
    @V{\lambda_Y}V{\cong}V @V{\cong}V{\lambda_X}V\\
    J(Y)^\vee @>{\pi^\vee}>> J(X)^\vee.
    \end{CD}
  \end{equation}
In other words, if we identify each Jacobian with its dual via the
canonical polarization, then $\pi$ and $\pi^*$ are dual to each
other(cf. \cite[Prop.11.11.6]{MR2062673} in the case $k=\cc$, and
\cite[Prop A.6]{MR2987306} in much more generality).  
\end{sect}

\begin{sect}
  Let $i_N : J_{f,N}^\old\to J(C_{f,N})$ be the inclusion, and
  $\lambda_N:=\lambda_{C_{f,N}}:J(C_{f,N})\to J(C_{f,N})^\vee$ be the
  canonical principal polarization.  By
  the proof of \cite[Theorem 19.1]{Mumford_AV}, $J_{f,N}^\nw$ is the
  identity component of $\ker(i_N^\vee\circ \lambda_N)$.  Similar to
  Definition~\ref{defn:def-of-new-part}, we define $J_{f,N}^{D-\nw}$
  to be the orthogonal complement of $\pi_D^*J(C_{f,D})$. Then
  $J_{f,N}^{D-\nw}$ coincides with the identity component of 
\[\ker((\pi_D^*)^\vee\circ \lambda_N)=\ker(\lambda_D^{-1}\circ
(\pi_D^*)^\vee\circ \lambda_N )= \ker \pi_D,  \qquad \text{ by }  (\ref{eq:CD-polarization}).\]

Suppose $D_1\mid D_2$ and $D_2\mid N$, then the quotient map
$\pi_{D_1}: C_{f, N}\to C_{f,D_1}$ factors as a composition of
successive quotient maps $ C_{f,N}\to C_{f, D_2}\to C_{f,D_1}.$
Therefore,
\[ \pi_{D_1}^*J(C_{f, D_1})\subseteq \pi_{D_2}^* J(C_{f, D_2})
\subseteq J(C_{f, N}).\] 
In particular, if $N=q=p^r$ is a prime power, then 
$J_{f,q}^\old= \pi_p^* J(C_{f, q/p})$, and 
\[ J_{f,q}^\nw= J_{f,q}^{(q/p)-\nw}= \text{ identity component of }
\ker \pi_{q/p} .\]
\end{sect}

\begin{sect}\label{sec:connectedness-of-the-kernel}
  We assume that $f(x)\in k[x]$ satisfies one of the following
  conditions:
\begin{itemize}
\item there exists a root $\alpha\in \bar{k}$ of $f(x)$ such that its
  multiplicity $m_\alpha$ is coprime to $N$;
\item $\deg f(x)$ is coprime to $N$. 
\end{itemize}
In the first case, there is exactly one point $P\in C_{f,N}(\bar{k})$
corresponding to $(\alpha, 0)\in \mathbb{A}^2({\bar{k}})$. (Generally
one needs to perform some desingularization to obtain $C_{f,N}$.)
Moreover, the covering map $\pi: C_{f,N}\to \pp^1$ in
(\ref{eq:cyclic_cover}) is totally ramified at $P$. In the second
case, there is exactly one point $P:=\infty$ at infinity for $C_{f,N}$, and
$\pi$ is totally ramified at $P$ again. Either way, it follows that
$\pi_D: C_{f,N}\to C_{f,D}$ is totally ramified at $P$ for each $D\mid
N$.  

Let $K_{N,D}$ be the kernel of $\pi_D^*: J(C_{f,D})\to J(C_{f,N})$. We
have seen in Subsection~\ref{subsec:general-results-about-curves} that
$K_{N,D}\leq J(C_{f,D})[N/D]$. By \cite[Section 9]{MR861976},
$K_{N,D}^\vee(\bar{k})$ is isomorphic to the covering group of the
maximal abelian unramified covering (over $\bar{k}$) of $C_{f,D}$
which is intermediate to $\pi_D: C_{f,N}\to C_{f,D}$.  Such a
covering must be trivial under our assumption on $f(x)$. Therefore,
$K_{N,D}$ is trivial, and $\pi_D^*$ is an embedding for all $D\mid
N$. 

Let $A$ be the quotient abelian variety of $J(C_{f,N})$ by
$\pi_D^*J(C_{f,D})$. We have an exact sequence 
\[  0\to J(C_{f,D})\xrightarrow{\pi_D^*}  J(C_{f,N})\to A\to 0. \]
Taking the dual exact sequence, we get
\[ 0 \to A^\vee \to J(C_{f,N})^\vee \xrightarrow{(\pi_D^*)^\vee}
J(C_{f,D})^\vee \to 0.\] By (\ref{eq:CD-polarization}), we may rewrite
the exact sequence as
\[ 0 \to A^\vee \to J(C_{f,N})\xrightarrow{\pi_D} J(C_{f,D}) \to 0.\]
Therefore, $\ker\pi_D=A^\vee$ is connected.  On the other hand, recall
that $J_{f,N}^{D-\nw}$ is equal to the identity component of $\ker
\pi_D$. It follows that
\begin{equation}
  \label{eq:kernel-connected}
  J_{f,N}^{D-\nw}=\ker \pi_D=A^\vee.
\end{equation}
Since $\pi_D^*$ is an embedding,
\begin{equation}
  \label{eq:torsion-inherence}
 J(C_{f,D})[N/D]= \ker (\pi_D\circ \pi_D^*)\subseteq \ker
\pi_D=J_{f,N}^{D-\nw}. 
\end{equation}
In other words, $\pi_D^* J(C_{f,D})\cap J_{f,N}^{D-\nw}= J(C_{f,D})[N/D]$.
Note that both $\pi_D$ and $\pi_D^*$ are defined over $k$, so
$J_{f,N}^{D-\nw}$ ``inherits'' from $J(C_{f,D})$ a
$\Gal(\bar{k}/k)$-module structure that's isomorphic to
$J(C_{f,D})[N/D]$.

In particular, if $N=q=p^r$ is a prime power, and $f(x)$ has no
multiple roots, then $J_{f,q}^\nw=\ker \pi_{q/p}$, and
\begin{equation}
  \label{eq:prime-power-inherence-torsion}
J_{f,q}^\nw[p]\supseteq \pi_{q/p}^* J(C_{f,q/p})[p] \cong
J(C_{f,q/p})[p]. 
\end{equation}
We have an exact sequence 
\[ 0 \to J(C_{f,q/p})[p]\to J(C_{f, q/p})\times J_{f,q}^\nw\to
J(C_{f,q})\to 0, \]
which makes (\ref{eq:20}) more explicit. 
\end{sect}
\begin{sect}\label{subsec:Serre-duality-differential}
  Let $X$ be a curve over $k$, and $\Lie_k(J(X))$ be
  the Lie algebra of $J(X)$, which is canonically isomorphic to the
  tangent space to $J(X)$ at $0$. The Picard functoriality induces a
  right action of $\Aut_k(X)$ on $\Lie_k(J(X))$. The
  isomorphism $\Lie_k(J(X))\cong H^1(X, \OO_X)$ given in
  \cite[Proposition 2.1]{MR861976} is
  $\Aut_k(X)$-equivariant. Combining with the Serre duality
  \cite[Corollary 7.13]{MR0463157}), we obtain a perfect and
  $\Aut_k(X)$-equivariant pairing
\begin{equation}
  \label{eq:pairing-Lie-alg-differential}
  \Gamma(X, \Omega_X^1)\times \Lie_k(J(X))\to k,
\end{equation}
where $\Aut_k(X)$ acts on $\Gamma(X, \Omega_X^1)$ from the right via
pull-backs. (Over $\cc$, this follows directly from the classical
definition of the Jacobian \cite[Section 2]{MR861976}.) Note that
$\Aut_k(X)$ also acts on $\Lie_k(J(X))$ from the left via Albanese
functoriality, which is just the inverse of the Picard
action. Therefore, we will also let $\Aut_k(X)$ act on $\Gamma(X,
\Omega_X^1)$ from the left by taking the inverse of the pullback so
that (\ref{eq:pairing-Lie-alg-differential}) is again
$\Aut_k(X)$-equivariant.
\end{sect}

\begin{sect}
  Since $k$ contains a primitive $N$-th root of unity $\xi_N$, any
  left representation $V$ of $G=\zmod{N}$ over $k$ splits into a
  direct sum of subrepresentations, indexed by the $k$-valued
  character group $\widehat{G}(k)$ of $G$.
  \begin{equation}
    \label{eq:23}
    V= \bigoplus_{\chi\in \widehat{G}(k)} V_\chi, 
  \end{equation}
  where $V_\chi:=\{ v\in V\mid gv = \chi(g) v, \forall g\in G\}$.

  Recall that $J_{f,N}^\nw = N\eta_N J(C_{f,N})$. Let $d(N\eta_N):
  \Lie_k(J(C_{f,N}))\to \Lie_k(J(C_{f,N}))$ be the induced morphism of
  Lie algebras of $N\eta_N\in \End(J(C_{f,N}))$.  Since the isogeny
  $J_{f,N}^\old \times J_{f,N}^\nw \to J(C_{f,N})$ in (\ref{eq:20}) is
  separable with kernel isomorphic to a subgroup of $J(C_{f,N})[N]$,
  $\Lie_k(J_{f,N}^\nw)$ coincides with the image of $d(N\eta_N)$.
  Clearly, each $\Lie(J(C_{f,N}))_\chi$ is $d(N\eta_N)$ invariant. It
  follows from (\ref{eq:21}) that $d(N\eta_N)$ acts on
  $\Lie(J(C_{f,N}))_\chi$ as multiplication by $N$ if $\chi(\delta_N)$
  is a primitive $N$-th root of unity in $k$, and $0$
  otherwise. Therefore, 
  \begin{equation}
    \label{eq:24}
    \Lie_k(J_{f,N}^\nw)= \bigoplus_{a\in \umod{N}} \Lie_k(J(C_{f,N}))_{\chi_N^a},
  \end{equation}
  where $\chi_N$ is the unique character in $\widehat{G}(k)$ such that
  $\chi_N(\delta_N)=\xi_N$.  We write $h: \umod{N}\to \nn$ for the
  dimension function define by 
\begin{equation}
  \label{eq:25}
h(a)= \dim_k \Lie_k(J(C_{f,N}))_{\chi_N^a}.
\end{equation}

\begin{sect}\label{subsec:explicit-multiplicity-differential} Assume that $k$ has characteristic zero, and
  $\xi_N=\zeta_N=e^{2\pi i/N}$.  We force $G\cong \zmod{N}$ to act on
  $\Gamma(C_{f,N}, \Omega_{C_{f,N}}^1)$ from the left by taking the
  inverse of the pullback.  It follows from
  Subsection~\ref{subsec:Serre-duality-differential} that
  \[h(a)= \dim_k \Gamma(C_{f,N}, \Omega_{C_{f,N}}^1)_{\chi_N^{-a}}.\]
  If $f(x)$ has no multiple roots and $N\nmid n$, then 
  \begin{equation}
    \label{eq:basis-of-diff-first-kind}
 \left\{\frac{x^{b-1}dx}{y^a}\mid 1\leq a<N, 1\leq b\leq
    \fl{\frac{na}{N}} \right\}
  \end{equation}
  is a basis for $\Gamma(C_{f,N}, \Omega_{C_{f,N}}^1)$ by
  \cite[Proposition 2]{MR1357406}. Clearly, each $x^{b-1}dx/y^a$ is an
  eigenvector for $(\delta_N^{-1})^*$ corresponding to eigenvalue
  $\zeta_N^a$.
  In particular, if $f(x)$ has no multiple roots, and $N\nmid n$, then
     \begin{equation}
       \label{eq:26}
       h(a)=
       n-1-\fl{\frac{na}{N}}.
     \end{equation}
  Here $\fl{t}$ is the smallest integer less or equal to $t$ (i.e.,
  the floor function). In the floor function of (\ref{eq:26}), we
  take $a$ to be the unique integer between 0 and $N-1$ for the
  corresponding residue class. One easily checks that $h(a)+h(-a)=
  n-1$ for the function $h$ in (\ref{eq:26}).
  \end{sect}


  Let $E$ be a number field, and $k$ be a field of characteristic zero
  that contains all conjugates of $E$. Let $\Sigma_E^k=\{ \sigma \mid
  \sigma: E\hookrightarrow k\}$ be the set of all embeddings of $E$
  into $k$. (We'll drop the superscript $k$ if $k=\cc$).  Any $(E,
  k)$-bimodule $V$ splits into a direct sum of $k$-vector spaces
  $V=\oplus_{\sigma \in \Sigma_E^k} V_\sigma$, where $V_\sigma:=\{
  v\in V \mid e\cdot v = \sigma(e)v, \forall e\in E\}$. Mimicking the
  definition of CM-types, we make the following definition.

  \begin{defn} 
    Let $E$ and $k$ be as above. Suppose that $(X, \ii)$ is a pair
    consisting an abelian variety $X/k$ together with an embedding $\ii:
    E\hookrightarrow \End_k^0(X)$. Then $\Lie_k(X)$ is naturally an $(E,
    k)$-bimodule. The function $h: \Sigma_E^k\to \nn$ defined by 
\[ h(\sigma)= \dim_k \Lie_k(X)_\sigma\]
is called the \textit{generalized multiplication type} of $(X, \ii)$. 
  \end{defn}

  \begin{sect}\label{subsec:def-generalized-mul-type}
    Let the assumptions be the same as
    Subsection~\ref{subsec:explicit-multiplicity-differential}. Consider
    the pair $(J_{f,N}^\nw, \ii)$ with $\ii:
    \qq(\zeta_N)\hookrightarrow \End^0( J_{f,N}^\nw)$ given in
    (\ref{eq:13}). Then
    \[ \Sigma_{\qq(\zeta_N)}^k = \{ \sigma_a\mid a\in \umod{N}, \text{
      and } \sigma_a : E\hookrightarrow k, \quad \zeta_N\mapsto
    \xi_N^a\}, \] which is naturally identified with the set
    $\umod{N}$. One easily sees
    that \[\Lie_k(J_{f,N}^\nw)_{\sigma_a}=\Lie_k(J_{f,N}^\nw)_{\chi_N^a}.\]
    Therefore, the generalized multiplication type of $(J_{f,N}^\nw,
    \ii)$ is given by (\ref{eq:26}) under the aforementioned
    assumptions on $f(x)$ and $N$.
  \end{sect}

  \begin{sect}
    Let $\iota: \qq(\zeta_N)\to \qq(\zeta_N)$ be the complex
    conjugation, $\bar{\ii}:=\ii\circ \iota$, and $\bar{h}$ be the
    multiplication type of $(J_{f,N}^\nw, \bar{\ii})$. Then 
    \begin{equation}
      \label{eq:33}
      \bar{h}(a)=h(-a)= \dim_k \Gamma(C_{f,N},
      \Omega_{C_{f,N}}^1)_{\chi_N^a}. 
    \end{equation}
    This saves us the trouble to go from the left representation
    $\Gamma(C_{f,N}, \Omega_{C_{f,N}}^1)$ of $G$ to its dual
    representation $\Lie_k(J(C_{f,N}))$ in some
    calculations. Moreover, (\ref{eq:26}) takes a simpler form
    \begin{equation}
      \label{eq:34}
      \bar{h}(a)= \fl{\frac{na}{N}}. 
    \end{equation}
Therefore, it is more convenient to replace $(\ii, h)$ with
$(\bar{\ii}, \bar{h})$, which we will do in the next section. 
  \end{sect}


\end{sect}
\section{complex abelian varieties with given multiplication type}
\label{sec:compl-abel-vari}
Throughout this section, $E:=\qq(\zeta_N)$ is the $N$-th cyclotomic
field, and $(X, \ii)$ will denote a pair consisting a complex abelian
variety $X$ together with an embedding $\ii:
E\hookrightarrow \End^0(X)$. We will identify $E$ with its image in
$\End^0(X)$ via $\ii$ and write $E\subseteq \End^0(X)$.  Let $h:
\umod{N}\to \nn$ be the generalized multiplication type of $(X, \ii)$.
We will classify $\End^0(X)$, using arithmetic properties of $h$. In
the case $X=J_{f,N}^\nw$, we assume that $h$ is given by
(\ref{eq:34}).

\begin{sect}\label{subsec:Hodge-decomposition}
  In general, let $\EE$ be a number field, and $g: \Sigma_\EE\to \nn$
  be the generalized multiplication type of a pair $(Z, \jj)$ of a
  complex abelian variety $Z$ together with an embedding $\jj:
  \EE\to \End^0(Z)$.  The first rational homology group
  $V_\qq:=H_1(Z,\qq)$ carries naturally a structure of faithful
  $\End^0(Z)$-module, and hence a structure of $\EE$-vector space of
  dimension $2\dim Z/[\EE:\qq]$. In particular, $V_\qq\otimes_\qq\cc$
  is a free $\EE\otimes_\qq \cc$ module of rank $2\dim Z/[\EE:\qq]$. That
  is,
  \[ H_1(Z, \cc)=H_1(Z,\qq)\otimes_\qq\cc = \bigoplus_{\sigma\in
    \Sigma_\EE} H_1(Z, \cc)_\sigma, \] where each $H_1(Z, \cc)_\sigma$
  is a complex vector space of dimension $2\dim Z/[\EE:\qq]$.  On the
  other hand, we have the Hodge decomposition \cite[Chapter
  1]{Mumford_AV},
  \[ H_1(Z, \cc)= H^{-1,0}(Z)\oplus H^{0, -1}(Z),\] where
  $H^{-1,0}(Z)$ and $H^{0, -1}(Z)$ are mutually complex conjugate
  $\cc$-vector spaces of dimension $\dim(Z)$. The splitting is
  $\End^0(Z)$-invariant and the $\End^0(Z)$-module $H^{-1,0}(Z)$ is
  canonically isomorphic to $\Lie_\cc(Z)$. For any $\sigma\in
  \Sigma_\EE$, we write $\bar{\sigma}$ for the composition of
  $\EE\xrightarrow{\sigma} \cc$ with the complex conjugation map
  $\cc\to \cc$. Then
\[ H_1(Z, \cc)_\sigma\cong  \Lie_\cc(Z)_\sigma \oplus \overline{\Lie_\cc(Z)_{\bar{\sigma}}}.\]
Therefore,
\begin{equation}
  \label{eq:hodge-decomposition}
  g(\sigma)+g(\bar{\sigma})= 2\dim Z/[\EE:\qq].
\end{equation}
\end{sect}

\begin{sect}
  Let $\End^0(X, \ii)$ be the centralizer of $\ii(E)$ in
  $\End^0(X)$. As $\End^0(X)$ itself is a semisimple $\qq$-algebra,
  $\End^0(X, \ii)$ is a semisimple $E$-algebra. Let $m=2\dim
  X/\varphi(N)$. We have
 \[  E \subseteq  \End^0(X, \ii) \subseteq \End_E(H_1(X, \qq))\cong
 \Mat_m(E).\] 
Suppose that $\dim X= \varphi(N)=[E:\qq]$, then we have the following
possibilities for $\End^0(X, \ii)$:
\begin{equation}
  \label{eq:28}
 \End^0(X, \ii)=
 \begin{cases}
   E, \\
   L, \\
   E\oplus E,\\
   \Mat_2(E),\\
 \end{cases}
\end{equation}
where $L/E$ is a field extension of degree 2. In the last three cases,
$X$ is an abelian variety of CM-type, as observed in
\cite[Theorem 3.1]{MR2349666}. We claim that $\End^0(X,
\ii)\neq \Mat_2(E)$ if there exists $a\in \umod{N}$ with
$h(a)=1$. Indeed, as in
Subsection~\ref{subsec:def-generalized-mul-type},\[\Lie_\cc(X)=\bigoplus_{a\in
  \umod{N}}\Lie_\cc(X)_a,\] and each $\Lie(X)_a$ is a $\End^0(X,
\ii)$-invariant complex vector space of dimension $h(a)$. On the other
hand, $\Mat_2(E)\otimes_\qq \cc\cong \oplus_{a\in \umod{N}}
\Mat_2(\cc)$, and a minimal module of $\Mat_2(\cc)$ is 2-dimensional.
\end{sect}
\begin{lem}\label{lem:centralizer-is-not-matrix-algebra}
Suppose that $N\not \in \{3, 4, 6, 10\}$, and $f(x)\in \cc[x]$ is
polynomial of degree $3$ with no multiple roots. Then
$\End^0(J_{f,N}^\nw, \ii)\neq \Mat_2(E)$. 
\end{lem}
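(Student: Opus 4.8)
The plan is to use the criterion established just before the lemma: the equality $\End^0(J_{f,N}^\nw,\ii)=\Mat_2(E)$ forces $h(a)\geq 2$ for every $a\in\umod{N}$, because each $\Lie_\cc(X)_a$ must be a module over $\Mat_2(\cc)$ and hence even-dimensional (in particular nonzero). So it suffices to produce a single $a\in\umod{N}$ with $h(a)=1$ (or, using the complex-conjugate normalization, $\bar h(a)=1$). Since $\deg f=3$, we have $n=3$, and by (\ref{eq:34}) the relevant multiplication type is $\bar h(a)=\fl{3a/N}$ for $a$ the representative in $\{1,\dots,N-1\}$; note $N\nmid 3$ here because $N\notin\{3,6\}$ and $\gcd$ considerations, so (\ref{eq:34}) applies. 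Thus I must show: for $N\notin\{3,4,6,10\}$ there exists $a$ coprime to $N$ with $\fl{3a/N}=1$, equivalently $N/3\leq a<2N/3$ and $\gcd(a,N)=1$.

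\medskip

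The heart of the argument is therefore the elementary number-theoretic claim that the interval $[N/3,\,2N/3)$ contains an integer coprime to $N$, whenever $N>1$ and $N\notin\{3,4,6,10\}$. I would handle this by cases on the small prime factors of $N$. If $3\mid N$, then $a=N/3$ itself works provided $\gcd(N/3,N)=N/3=1$... no; instead note that when $3\mid N$ one should instead observe $\fl{3a/N}=1$ holds for $a$ in a sub-interval and pick $a$ avoiding the primes dividing $N$. More robustly: the interval $[N/3,2N/3)$ has length $N/3$, and by a sieve / inclusion–exclusion estimate the number of integers in an interval of length $N/3$ coprime to $N$ is at least $\frac{N}{3}\prod_{p\mid N}(1-\frac1p) - 2^{\omega(N)}$, which is positive once $N$ is large; the finitely many remaining $N$ are checked by hand, and the exceptional set $\{3,4,6,10\}$ emerges precisely as those small $N$ for which every integer in $[N/3,2N/3)$ shares a factor with $N$. (E.g. $N=4$: the only integer is $2$; $N=6$: only $2,3$... wait $[2,4)=\{2,3\}$, both share a factor; $N=10$: $[10/3,20/3)=\{4,5,6\}$, all share a factor with $10$; $N=3$: $[1,2)=\{1\}$ but $\fl{3\cdot1/3}=1$, so actually $a=1$ gives $\bar h(1)=1$ — here the issue is rather that $\varphi(3)=2$ and the relevant hypothesis $\dim X=\varphi(N)$ fails or $J_{f,3}^\nw$ is an elliptic curve, consistent with the Remark.) I would tabulate $N\leq 30$ or so explicitly and invoke the sieve bound beyond that.

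\medskip

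The main obstacle I anticipate is making the coprimality-in-a-short-interval argument clean rather than ad hoc: the interval $[N/3,2N/3)$ is short relative to $N$, so the crude bound $\frac N3\prod_{p\mid N}(1-1/p)-2^{\omega(N)}>0$ needs $N$ moderately large, and one must be careful with $N$ divisible by many small primes (e.g. $N=2\cdot3\cdot5\cdot7=210$) where $\omega(N)$ is large — though there the interval is also long, so it still works, but the bookkeeping must be done carefully. A cleaner route, which I would try first, is: among $a=\fl{N/3}+1$ and a few nearby values, or among $a$ and $2N-a$-type reflections, one can often directly exhibit a coprime representative using that consecutive integers are coprime and that at most two of any three consecutive integers are divisible by any fixed prime $\geq 2$; combined with treating the cases $2\mid N$, $3\mid N$, $2\nmid N$ and $3\nmid N$ separately, this should dispatch all but a short finite list, which is then exceptional exactly when it is genuinely exceptional. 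The rest of the proof — that $h(a)=1$ rules out $\Mat_2(E)$ — is already done in the text preceding the lemma, so nothing further is needed there.
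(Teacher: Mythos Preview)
Your proposal is correct and matches the paper's approach: the paper simply invokes the criterion that $h(a)=1$ for some $a$ rules out $\Mat_2(E)$, and then cites Proposition~\ref{prop:exist-value-one}, which is exactly the statement that $[N/3,2N/3]_\zz\neq\emptyset$ when $N\notin\{3,4,6,10\}$. The paper's proof of that proposition is the ``cleaner route'' you describe at the end --- a short constructive case analysis on the factorization type of $N$ (prime, $3\mid N$, or $\gcd(N,3)=1$ with smallest prime factor $p_0$) rather than the sieve bound you lead with; your sieve-plus-tabulation idea would also work but is less tidy.
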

\begin{proof}
  The multiplication type function $h$ is given by (\ref{eq:34}). It
  takes value $1$ for some $a\in \umod{N}$ by
  Proposition~\ref{prop:exist-value-one} if $N\not\in \{3, 4, 6,
  10\}$. 
\end{proof}

\begin{rem}
  Let $\lambda: X\to X^\vee$ be a polarization on $X$ that induces a
  Rosati involution $\alpha \mapsto \alpha^\dagger$ on
  $\End^0(X)$. Suppose that $E$ is invariant under the Rosati
  involution (i.e., $E^\dagger =E$), then its centralizer $\End^0(X,
  \ii)$ is also invariant under the Rosati involution. In particular,
  if $\End^0(X, \ii)=L$, then $L$ is a CM-field. This holds if
  $X=J_{f,N}^\nw$ and we take $\lambda$ to be the restriction of the
  canonical principal polarization of $J(C_{f,N})$ to $J_{f,N}^\nw$. 
\end{rem}

\begin{sect}\label{subsec:multiplication-type-primitive}
  For $s\in \umod{N}$, we write $\theta_s: \umod{N}\to \umod{N}$ for
  the multiplication by $s$ map: $a\mapsto sa$. The generalized
  multiplication type $h:\umod{N}\to \nn$ is said to be
  \textit{primitive} if $h\circ \theta_s = h \Leftrightarrow s=1$.
  Suppose that $\dim X= \varphi(N)(n-1)/2$, and $h$ is given by
  (\ref{eq:34}). Then $h$ is primitive if one of the following
  condition holds: 
  \begin{itemize}
  \item $\gcd(n, N)=1$,
    (by Proposition~\ref{prop:arithmetic-results-rigidity-of-h});
  \item $n=3$, and $N=3^r\geq 9$, (by \cite[Lemma 4.2]{xue_JNT}).
  \end{itemize}
\end{sect}

For the rest of the section, we will do a case by case study of the
first three cases of (\ref{eq:28}). The case when $\End^0(X, \ii)=E$
and when $N=q=p^r$ is a prime power was treated in \cite{MR2349666} for
$n=\deg f=3,4$, and in \cite{MR2166091} and \cite{MR2471095} for
$n\geq 5$, where it was assumed that $\gcd(q,n)=\gcd(p,n)=1$. The case
when $q=p^r$ and $p\mid n$ was treated in \cite{xue_JNT}. We will
extend these results to a more general $N$.

First, we state the following theorem of Zarhin \cite[Theorem
2.3]{MR2040573}.

\begin{thm} \label{thm:Zarhin_multiplication_type} Let the notation be
  the same as in Subsection~\ref{subsec:Hodge-decomposition}.  Suppose
  that $\EE$ (identified with its image via $\jj$) contains the center
  $\CV_Z$ of $\End^0(Z)$, and $\EE/\CV_Z$ is Galois, then
\[ g(\sigma \circ \kappa)=
  g(\sigma), \qquad  \forall \sigma\in \Sigma_\EE, \forall \kappa \in \Gal(\EE/\CV_Z). \]
\end{thm}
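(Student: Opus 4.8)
The plan is to deduce the statement from the Skolem--Noether theorem applied to the semisimple $\qq$-algebra $\End^0(Z)$. First I would observe that the hypothesis $\CV_Z\subseteq\jj(\EE)$ forces $\End^0(Z)$ to be a \emph{simple} $\qq$-algebra. Indeed, writing $\End^0(Z)=\prod_{i=1}^{t}B_i$ as a product of simple algebras, its center $\CV_Z=\prod_{i=1}^{t}Z(B_i)$ is a product of number fields; a unital subring of such a product that embeds into the field $\jj(\EE)$ can contain no nontrivial idempotents, so $t=1$. Thus $B:=\End^0(Z)$ is a central simple algebra over the number field $K:=\CV_Z$, and we regard $\EE\subseteq B$ as a subfield containing $K$ with $\EE/K$ Galois.

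Next, fix $\kappa\in\Gal(\EE/K)$. Because $\EE/K$ is Galois we have $\kappa(\EE)=\EE$, so both the inclusion $\EE\hookrightarrow B$ and its composite with $\kappa$ are $K$-algebra embeddings of $\EE$ into $B$. By the Skolem--Noether theorem there is a unit $b\in B^{\times}$ with $b\,e\,b^{-1}=\kappa(e)$ in $B$ for every $e\in\EE$. Since $b\in\End^0(Z)$, it acts $\qq$-linearly on $H_1(Z,\qq)$ and $\cc$-linearly on $H_1(Z,\cc)$, and being an endomorphism up to isogeny of $Z$ it respects the Hodge decomposition $H_1(Z,\cc)=H^{-1,0}(Z)\oplus H^{0,-1}(Z)$; in particular it restricts to a $\cc$-linear automorphism of $H^{-1,0}(Z)\cong\Lie_\cc(Z)$, the space whose $\sigma$-isotypic piece for the $\EE$-action has dimension $g(\sigma)$.

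Now I would run the elementary eigenspace computation. For $\sigma\in\Sigma_\EE$ let $H^{-1,0}(Z)_\sigma$ be the subspace on which each $e\in\EE$ acts through $\sigma(e)$. If $v\in H^{-1,0}(Z)_\sigma$ and $w:=b^{-1}v$, then for $e\in\EE$ one has $e\,w=b^{-1}(b e b^{-1})v=b^{-1}\kappa(e)v=\sigma(\kappa(e))\,b^{-1}v=(\sigma\circ\kappa)(e)\,w$, so $b^{-1}$ carries $H^{-1,0}(Z)_\sigma$ into $H^{-1,0}(Z)_{\sigma\circ\kappa}$; the reverse inclusion follows in the same way from $b$ (using $b^{-1}eb=\kappa^{-1}(e)$), so $b^{-1}$ restricts to a linear isomorphism $H^{-1,0}(Z)_\sigma\xrightarrow{\ \sim\ }H^{-1,0}(Z)_{\sigma\circ\kappa}$. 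Hence $g(\sigma)=\dim_\cc H^{-1,0}(Z)_\sigma=\dim_\cc H^{-1,0}(Z)_{\sigma\circ\kappa}=g(\sigma\circ\kappa)$, and as $\kappa$ was arbitrary in $\Gal(\EE/K)$ this is exactly the assertion.

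The Hodge-theoretic inputs (that $\End^0(Z)$ preserves $H^{-1,0}(Z)$, and that $H^{-1,0}(Z)\cong\Lie_\cc(Z)$ as $\End^0(Z)$-modules) are recorded in Subsection~\ref{subsec:Hodge-decomposition}, and the eigenspace bookkeeping is routine. The only points that need genuine care are the reduction to a simple algebra and the check that the Galois hypothesis is exactly what makes $\jj\circ\kappa$ land inside $B$ (not merely in some larger field), so that Skolem--Noether is applicable; I expect that — still modest — verification to be the main obstacle.
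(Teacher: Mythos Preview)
Your argument is correct. The paper does not supply its own proof of this statement: it quotes the result as \cite[Theorem~2.3]{MR2040573} and only remarks that Zarhin's argument, stated there under the stronger hypothesis that $\EE/\qq$ is Galois, goes through verbatim once $\EE/\CV_Z$ is Galois. What you have written is a clean, self-contained version of that argument---the reduction to a simple algebra via the observation that $\CV_Z$ sits inside the field $\jj(\EE)$ and hence has no nontrivial idempotents, followed by Skolem--Noether to realize each $\kappa\in\Gal(\EE/\CV_Z)$ as conjugation by some $b\in\End^0(Z)^\times$, and finally the eigenspace bookkeeping on $H^{-1,0}(Z)$. This is exactly the standard route and is, in substance, Zarhin's proof; your closing remark correctly isolates the one place where the Galois hypothesis is used.
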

The statement of \cite[Theorem 2.3]{MR2040573} is restricted to the
case that $\EE/\qq$ is Galois. However, its proof shows that the
theorem holds as long as $\EE$ is Galois over $\CV_Z$.  The next
proposition generalizes \cite[Theorem 4.2]{MR2349666} and
\cite[Corollary 2.2]{MR2166091} and follows the main idea of their
proofs. 


\begin{prop}\label{prop:centeralizer-is-E}
  If $\End^0(X, \ii)=E$, and the generalized multiplication type $h:
  \umod{N}\to \nn$ of $(X, \ii)$ is primitive, then $X$ is absolutely
  simple, and $\End^0(X)\cong E$.
\end{prop}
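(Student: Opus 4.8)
The plan is to show that $\End^0(X,\ii)=E$ forces both absolute simplicity and $\End^0(X)=E$, by combining Zarhin's rigidity theorem (Theorem~\ref{thm:Zarhin_multiplication_type}) with the primitivity hypothesis on $h$.

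\textbf{Setup.} First I would reduce to understanding the isotypic decomposition of $X$. Write $X\sim \prod_i X_i^{n_i}$ up to isogeny with the $X_i$ pairwise non-isogenous absolutely simple abelian varieties, so that $\End^0(X)\cong\prod_i \Mat_{n_i}(D_i)$ where $D_i=\End^0(X_i)$ is a division algebra. Since $E$ is a field sitting inside $\End^0(X)$ and $\End^0(X,\ii)$ is the centralizer of $E$, I would first argue that $E$ must land inside a single factor: otherwise the centralizer would contain a nontrivial product of algebras, contradicting $\End^0(X,\ii)=E$ being a field. So $X\sim Y^m$ for a single absolutely simple $Y$, $\End^0(X)\cong\Mat_m(D)$ with $D=\End^0(Y)$, and $E\hookrightarrow\Mat_m(D)$ with centralizer equal to $E$. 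A standard double-centralizer / dimension count in $\Mat_m(D)$ then shows $E$ is a maximal subfield, forcing $m^2\dim_\qq D=[E:\qq]^2=\varphi(N)^2$; since $\dim X=\varphi(N)$ and $\dim X=m\dim Y$, one also has $\dim_\qq D = m$. Combined, $m^3 = \varphi(N)^2/\dim Y \cdot$ (bookkeeping), and in fact the cleanest route is: $E$ maximal in $\Mat_m(D)$ gives $\dim_\qq\Mat_m(D) = [E:\qq]^2 = \varphi(N)^2$ is false in general — rather $E$ maximal means $\varphi(N)^2 = \dim_\qq\Mat_m(D)\cdot$ nothing; let me instead use that a maximal subfield $E$ of $\Mat_m(D)$ satisfies $[E:\qq]^2 = \dim_\qq \Mat_m(D)$ only when $D$ has center $\qq$, so the right statement is that the center $\CV_X$ of $\End^0(X)$ is contained in $E$. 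So the real first step is: \emph{the center of $\End^0(X)$ is contained in $\ii(E)$}.

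\textbf{Applying Zarhin rigidity.} Let $\CV_X$ denote the center of $\End^0(X)$. Since $\CV_X\subseteq E$ and $E/\qq$ is abelian (hence $E/\CV_X$ is Galois), Theorem~\ref{thm:Zarhin_multiplication_type} applies and gives $h(\sigma\circ\kappa)=h(\sigma)$ for every $\kappa\in\Gal(E/\CV_X)$. Translating through the identification $\Sigma_E^\cc\cong\umod{N}$ from Subsection~\ref{subsec:def-generalized-mul-type}, the group $\Gal(E/\CV_X)$ corresponds to a subgroup $H\le\umod{N}$, and the invariance statement reads $h\circ\theta_s = h$ for all $s\in H$. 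By the \emph{primitivity} of $h$ (hypothesis), $h\circ\theta_s=h$ implies $s=1$, so $H=\{1\}$, i.e. $\Gal(E/\CV_X)$ is trivial, which means $\CV_X=E$. Thus $\End^0(X)$ is a central simple $E$-algebra (using that $X$ is isotypic, which follows once the center is the field $E$ and not a product). Now $\End^0(X,\ii)$ is the centralizer of the center, which is all of $\End^0(X)$; by hypothesis this equals $E$. Hence $\End^0(X)=E$. Finally, $\End^0(X)=E$ a field forces $X$ absolutely simple: if $X\sim Y_1\times Y_2$ nontrivially then $\End^0(X)$ would contain the idempotent projections and could not be a field; and the same argument over $\bar k$ (or rather, noting $\End^0(X)=\End^0_{\bar k}(X)$ already since we are over $\cc$ and working with the absolute endomorphism algebra) gives absolute simplicity.

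\textbf{Main obstacle.} The delicate point is the very first reduction — showing $\CV_X\subseteq\ii(E)$, equivalently that $E$ contains the center of $\End^0(X)$. This is where one needs that $E\hookrightarrow\End^0(X,\ii)=E$ means $E$ is its own centralizer, i.e. $E$ is a maximal commutative subalgebra of $\End^0(X)$; a maximal commutative semisimple subalgebra of a semisimple algebra always contains the center, so this is in fact automatic once one knows $\End^0(X,\ii)$, being equal to $E$, is commutative and equals the full centralizer of $E$. I would spell out this purely algebraic fact (centralizer of $E$ is $E$ $\Rightarrow$ center $\subseteq E$) carefully, then feed it into Zarhin's theorem as above. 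The rest is formal. I expect the write-up to be short, mirroring the proofs of \cite[Theorem~4.2]{MR2349666} and \cite[Corollary~2.2]{MR2166091} cited just before the proposition, with the primitivity hypothesis doing exactly the job of killing $\Gal(E/\CV_X)$.
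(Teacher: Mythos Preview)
Your proposal is correct and follows essentially the same approach as the paper: both observe that $\End^0(X,\ii)=E$ forces the center $\CV_X$ of $\End^0(X)$ to lie inside $E$ (since the center commutes with $E$ and so sits in the centralizer of $E$, which is $E$), then apply Theorem~\ref{thm:Zarhin_multiplication_type} and primitivity of $h$ to conclude $\CV_X=E$, whence $\End^0(X)$ equals its own center's centralizer, i.e.\ $\End^0(X)=\End^0(X,\ii)=E$. Your ``Setup'' paragraph is an unnecessary detour that you yourself abandon mid-stream; the paper simply omits that and goes straight to the center containment, so you should do the same in the final write-up.
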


\begin{proof}
  Since the centralizer of $E$ in $\End^0(X)$ coincides with $E$, the
  center $\CV_X$ of $\End^0(X)$ is contained in $E$. If $E\neq \CV_X$,
  by Theorem~\ref{thm:Zarhin_multiplication_type}, there exists $s\in
  \umod{N}$, $s\neq 1$ such that $h(a)= h(sa)$ for all $a\in
  \umod{N}$, which is not the case by our assumption.  Therefore, $E$
  coincides with the center of $\End^0(X)$. Hence
  $\End^0(X)= \End^0(X, \ii)$, which equals to $E$ by assumption.
\end{proof}


\begin{prop}\label{prop:centralizer-is-quadratic-extension}
  Suppose $\dim X=\varphi(N)$. If $\End^0(X, \ii)=L$, a quadratic
  extension of $E$, and the generalized multiplication type $h$ of
  $(X, \ii)$ is primitive, then $X$ is absolutely simple, and
  $\End^0(X)\cong L$.
\end{prop}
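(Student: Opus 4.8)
The plan is to argue exactly as in Proposition~\ref{prop:centeralizer-is-E}, but now tracking the extra quadratic datum. Since $\End^0(X,\ii)=L$ is a field, $\End^0(X)$ is a division algebra; in particular $X$ is isogenous to a power $Y^t$ of an absolutely simple abelian variety $Y$, and $\End^0(X)\cong\Mat_t(D)$ for a division algebra $D$. But $\End^0(X)$ being a division algebra forces $t=1$, so $X$ is isogenous to $Y$, i.e.\ $X$ itself is absolutely simple once we know $\End^0(X)$ is a division algebra. So the real content is to identify $\End^0(X)$ with $L$.

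First I would locate the center $\CV_X$ of $\End^0(X)$. The centralizer of $E$ in $\End^0(X)$ is $L$, a field; since the center $\CV_X$ centralizes everything, in particular it centralizes $E$, so $\CV_X\subseteq L$. Thus $\CV_X$ is either $L$ or a subfield; I claim $\CV_X=L$. If $\CV_X\subsetneq L$, then since $[L:E]=2$ and $E\subseteq L$, the only proper subfields of $L$ containing a field over which $L$ has degree dividing $2$ are\ldots here I have to be a little careful: $\CV_X$ need not contain $E$. But $\CV_X\subseteq L$ and $[L:\CV_X]$ divides $[L:\qq]$; more to the point, $E\cdot\CV_X$ is a subfield of $L$ containing $E$, hence equals $E$ or $L$. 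If $E\cdot\CV_X=L$ then $\End^0(X,\ii)=L$ is generated over $E$ by a central element, so $L$ is in the center of $\End^0(X)$ after all, i.e.\ $\CV_X\supseteq$ generates $L$ over $E$ — let me instead run the cleaner dichotomy below.

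Here is the clean version. Either $E\subseteq\CV_X$ or not. If $E\subseteq\CV_X$: then $\End^0(X)=\End^0(X,\ii)=L$ (the centralizer of a central subfield is the whole algebra), and since $L$ is a field we are done — $X$ is absolutely simple with $\End^0(X)\cong L$. If $E\not\subseteq\CV_X$: I want to derive a contradiction with primitivity. The subalgebra of $\End^0(X)$ generated by $E$ and $\CV_X$ is commutative (both are central-over or inside the field $L$ — indeed $\CV_X\subseteq L$ and $E\subseteq L$, so $E\cdot\CV_X\subseteq L$ is a field), call it $E'$; it is a subfield of $L$ strictly larger than $E$ would force $E'=L$, while $E'=E$ would mean $\CV_X\subseteq E$. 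In the first subcase $E'=L$ is generated over $\CV_X$ by\ldots no. Let me just take $E'':=\CV_X$ if $E\subseteq L$ forces the issue — I will instead invoke Theorem~\ref{thm:Zarhin_multiplication_type} with $\EE$ replaced by the compositum. Concretely: $L$ contains $\CV_X$ and $L/E$ is Galois of degree $2$ (any degree-$2$ extension is Galois in characteristic $0$); hence $L/(L\cap\CV_X)$ is Galois, and in fact $L/\CV_X$ is Galois since $\CV_X\subseteq L$. Apply Theorem~\ref{thm:Zarhin_multiplication_type} to $(X,\ii')$ where $\ii'$ is the embedding of $L$: the generalized multiplication type $h'$ of $(X,\ii')$ on $\Sigma_L^{\cc}$ is invariant under $\Gal(L/\CV_X)$. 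Restricting $h'$ along $\Sigma_E\hookleftarrow\Sigma_L$ (summing over the two extensions of each $\sigma\in\Sigma_E$, or noting $h'$ already refines $h$), Galois-invariance of $h'$ under any nontrivial $\kappa\in\Gal(L/\CV_X)$ translates to a nontrivial symmetry $h\circ\theta_s=h$ with $s\neq 1$ in $\umod N$: the key point is that $\Gal(L/E)$ sits inside $\Gal(L/\qq)$ and the action on $\Sigma_E$ is by multiplication by some $s\in\umod N$ coming from the way $\Gal(L/\CV_X)$ permutes embeddings; if $\CV_X\subsetneq L$ this $s$ is not $1$. This contradicts primitivity of $h$. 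Hence $E\subseteq\CV_X$, and we are in the first case.

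The main obstacle, and the step I would spend the most care on, is the bookkeeping in the last paragraph: showing that $E\not\subseteq\CV_X$ produces a genuine nontrivial element $s\neq1$ of $\umod N$ with $h\circ\theta_s=h$. The subtlety is that $\CV_X$ is a subfield of $L$ that may not contain $E$, so $\Gal(L/\CV_X)$ is not literally a subgroup of $\Gal(L/E)$; one must compare the $\Gal(L/\CV_X)$-action on $\Sigma_L^\cc$ with the identification $\Sigma_E^\cc\cong\umod N$ and check that nontriviality of $\Gal(L/\CV_X)$ (equivalently $\CV_X\neq L$) forces the induced map on $\Sigma_E$ to be nontrivial — this uses that $L=E\cdot\CV_X$ would put us back in the central case, so in the noncentral case $E\cap\CV_X$ is a proper subfield of $E$ of index $2$, over which $E$, and hence $\Sigma_E$, carries a nontrivial order-$2$ Galois symmetry, and this symmetry is exactly some $\theta_s$ with $s\neq1$. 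Everything else is a direct transcription of the proof of Proposition~\ref{prop:centeralizer-is-E} together with the elementary fact that a finite-dimensional $\qq$-algebra whose center is a field and which equals its own centralizer over that field must be that field (forcing $D=\CV_X$, $t=1$).
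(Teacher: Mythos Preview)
Your outline has the right skeleton—split on whether $E\subseteq\CV_X$, and in the good case conclude $\End^0(X)=L$—but the contradiction in the hard case is not established.

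Two small points first. The opening claim that ``$\End^0(X,\ii)=L$ a field forces $\End^0(X)$ to be a division algebra'' is false in general: a maximal subfield of $\Mat_2(\qq)$ is its own centralizer. What does follow is that $\End^0(X)$ is \emph{simple} (nontrivial central idempotents would lie in $L$), but not $t=1$ for free. Also, your ``index~$2$'' claim is inverted: one has $[\CV_X:E\cap\CV_X]=[L:E]=2$, not $[E:E\cap\CV_X]=2$.

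The genuine gap is your treatment of the case $E\not\subseteq\CV_X$. You write that ``$L=E\cdot\CV_X$ would put us back in the central case.'' This is wrong: $L$ being generated over $E$ by central elements does not make $L$ central, since $E$ itself is not assumed central. The configuration $E\cdot\CV_X=L$, $\CV_X\subsetneq L$, $\CV_X\not\subseteq E$ is exactly the case that needs work, and you have dismissed it. Here one may indeed apply Theorem~\ref{thm:Zarhin_multiplication_type} to $(X,L)$ (since $L=E\cdot\CV_X$ with $E/\qq$ Galois makes $L/\CV_X$ Galois), obtaining the $L$-type $g$ invariant under $\Gal(L/\CV_X)$. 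But $h(\sigma|_E)=g(\sigma)+g(\sigma\iota)$ with $\iota$ the generator of $\Gal(L/E)$, and to deduce $h((\sigma\kappa)|_E)=h(\sigma|_E)$ for $\kappa\in\Gal(L/\CV_X)$ you need $g(\sigma\kappa\iota)=g(\sigma\iota)$, i.e.\ you need $\iota$ to normalize $\Gal(L/\CV_X)$. The paper supplies precisely this missing step: setting $F_0:=E\cap\CV_X$, one has $[\CV_X:F_0]=2$, so $\CV_X/F_0$ is Galois, hence $L/F_0$ is Galois with $\Gal(L/F_0)=\Gal(L/\CV_X)\times\Gal(L/E)$; in particular $\iota$ commutes with every $\kappa$, the invariance of $g$ descends to a nontrivial invariance of $h$ under $\Gal(E/F_0)$, and primitivity is contradicted. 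Without this commutation argument your contradiction does not materialize.

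The paper also sidesteps your opening misstep by invoking CM theory at the outset: since $[L:\qq]=2\dim X$, $X$ is of CM-type, hence $X\sim Y^t$ with $Y$ simple, $F:=\End^0(Y)$ a CM subfield of $L$, $\End^0(X)\cong\Mat_t(F)$ with $t=[L:F]$, and $\CV_X=F$. This structural input makes the case split on $EF$ (equal to $E$, or to $L$ with $F=L$, or to $L$ with $F\neq L$) concrete from the start.
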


\begin{proof}
As $X$ is an abelian variety of CM-type, and by
\cite[Section II.5]{Shimura_CM}, we see that

\begin{enumerate}[(i)]
\item $X$ is isogenous to a product $Y\times \cdots \times Y$
  with a simple abelian variety $Y$. 
\item $F:=\End^0(Y)$ is a CM subfield of $L$ with $[F:\Q]=2\dim Y$.
\item $\End^0(X)\cong \Mat_t(F)$, where $t=[L:F]$ is the number of
  factors of $Y$ in the product $Y\times \cdots \times Y$. 
\item The center $\CV_X$ of $\End^0(X)$ coincides with $F$. 
\end{enumerate}

Note that we have a tower of fields $E\subseteq EF\subseteq L$. Since
$[L:E]=2$, either $EF=E$ or $EF=L$. We claim that $EF=L$. Suppose
otherwise, then $\CV_X=F\subseteq E$. Same argument as in the proof of
Proposition~\ref{prop:centeralizer-is-E} shows that $E=F$ and
    \[\End^0(X,\ii)=\End^0(X)\cong \Mat_t(F)\neq L. \]
    Contradiction to our assumption. Therefore, $EF=L$. If $L=F$, then
    $t=1$, and $X=Y$ is simple. Furthermore,
    \[ \End^0(X)=F=L=\End^0(X,\ii).\] So for the rest of the proof, we
    assume that $EF=L$, $F\neq L$ and show that this leads to a
    contradiction.

    Since $E=\qq(\zeta_N)$ is Galois over $\qq$, $L=EF$ is Galois over
    $F$ with $\Gal(L/F)\leq \Gal(E/\qq)$. Let $F_0:=F\cap E$. Then
    $[F:F_0]= [L:E]=2$, and $F/F_0$ is Galois as well.  By \cite[
    Theorem VI.1.14]{MR1878556} , $L/F_0$ is Galois, with
    $\Gal(L/F_0)= \Gal(L/F)\times \Gal(L/E)$. We write $\iota$ for the
    unique generator of $\Gal(L/E)$. It commutes with all elements of
    $\Gal(L/F)$.

    Let $g: \Sigma_L\to \nn$ the generalized multiplication type of
    $(X, L\hookrightarrow \End^0(X))$, and $h_0: \Sigma_F\to \nn$ be
    the CM-type of $(Y, F\hookrightarrow \End^0(Y))$. By
    (\ref{eq:hodge-decomposition}), both $g$ and $h_0$ takes values
    $0$ and $1$ only. Since $\Lie_\cc(X)$ is isomorphic to the direct
    sum of $t$-copies of $\Lie_\cc(Y)$, $g$ is induced from $h_0$ in
    the following sense:
    \[ g(\sigma)=1 \Leftrightarrow h_0(\sigma\mid_F)= 1, \qquad
    \forall \sigma\in \Sigma_L.\] In particular, $g(\sigma \kappa)=
    g(\sigma)$ for all $\kappa \in \Gal(L/F)$. On the other hand,
\[ h(\sigma\mid_E)= g(\sigma)+ g(\sigma\iota), \qquad \forall
\sigma\in \Sigma_L.\] 
It follows that for any $\kappa \in \Gal(L/F)$, $\sigma\in \Sigma_L$,  
\[
\begin{split}
h((\sigma\mid_E) \circ (\kappa\mid_E))&= h(\sigma\kappa \mid_E)=
g(\sigma\kappa)+ g(\sigma\kappa \iota)= g(\sigma\kappa)+g(\sigma \iota
\kappa)\\&= g(\sigma)+g(\sigma\iota)= h(\sigma\mid_E).
\end{split}.\]
This again contradicts the assumption that $h$ is primitive. 
\end{proof}

\begin{cor}\label{cor:nw-factor-of-jacobian-quad-ext}
  Suppose that $f(x)\in \cc[x]$ is a polynomial of degree $3$ with no
  multiple roots, $3\nmid N$ and $N\not\in \{4, 10\}$.  Suppose
  further that $\End^0(J_{f,N}^\nw)\supseteq L \supseteq E$, where $L$
  is a quadratic field extension of $E$. Then $\End^0(J_{f,N}^\nw)=L$.
\end{cor}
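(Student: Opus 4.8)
The plan is to run the pair $(J_{f,N}^\nw,\ii)$ through the four-way dichotomy of $(\ref{eq:28})$, eliminate all possibilities for the centralizer $\End^0(J_{f,N}^\nw,\ii)$ except $L$ itself, and then quote Proposition~\ref{prop:centralizer-is-quadratic-extension}.

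First I would record that $\dim J_{f,N}^\nw=\varphi(N)$. Since $N>1$ and $3\nmid N$, we have $N\nmid 3$, so $(\ref{eq:14})$ applied to the degree-$3$ polynomial $f(x)$ gives $\dim J_{f,N}^\nw=\varphi(N)(3-1)/2=\varphi(N)=[E:\qq]$. Thus $X:=J_{f,N}^\nw$ satisfies the hypothesis of $(\ref{eq:28})$, and $\End^0(X,\ii)$ is one of $E$, a quadratic field extension $L'$ of $E$, $E\oplus E$, or $\Mat_2(E)$.

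Next I would pin down which case occurs, using the hypothesis $L\subseteq\End^0(X)$ with $L\supseteq E$. As $L$ is a field containing $\ii(E)$, it is commutative and hence centralizes $\ii(E)$, so $L\subseteq\End^0(X,\ii)$ and $\dim_\qq\End^0(X,\ii)\geq\dim_\qq L=2\varphi(N)$. This already excludes $\End^0(X,\ii)=E$. It cannot be $E\oplus E$ either: that algebra has $\qq$-dimension exactly $2\varphi(N)$, so $L\subseteq E\oplus E$ would force $L=E\oplus E$, contradicting that $L$ is a field while $E\oplus E$ has zero divisors. Finally, $3\nmid N$ forces $N\notin\{3,6\}$, and $N\notin\{4,10\}$ by hypothesis, so $N\notin\{3,4,6,10\}$ and Lemma~\ref{lem:centralizer-is-not-matrix-algebra} rules out $\End^0(X,\ii)=\Mat_2(E)$. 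Hence $\End^0(X,\ii)$ is a quadratic field extension $L'$ of $E$, and the inclusion $L\subseteq L'$ together with $[L:E]=[L':E]=2$ gives $L=L'=\End^0(X,\ii)$.

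It then remains to apply Proposition~\ref{prop:centralizer-is-quadratic-extension}, all of whose hypotheses are now in place: $\dim X=\varphi(N)$; $\End^0(X,\ii)=L$ is a quadratic extension of $E$; and the generalized multiplication type $h$ of $(X,\ii)$, given by $(\ref{eq:34})$, is primitive because $\gcd(3,N)=1$ (Subsection~\ref{subsec:multiplication-type-primitive}, via Proposition~\ref{prop:arithmetic-results-rigidity-of-h}). The proposition yields that $X$ is absolutely simple and $\End^0(X)\cong L$, which contains and hence equals the given copy of $L$. There is no genuine obstacle here beyond bookkeeping; the only points to watch are that the exclusion of $E\oplus E$ really uses both that the given $L$ is a field (not merely an \'etale $E$-algebra) and the dimension count, and that the exceptional moduli $\{3,4,6,10\}$ of Lemma~\ref{lem:centralizer-is-not-matrix-algebra} are indeed all covered by the hypotheses $3\nmid N$ and $N\notin\{4,10\}$.
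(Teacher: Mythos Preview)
Your proposal is correct and follows essentially the same route as the paper: show $L\subseteq\End^0(J_{f,N}^\nw,\ii)$, invoke Lemma~\ref{lem:centralizer-is-not-matrix-algebra} to exclude $\Mat_2(E)$, conclude from the dichotomy~(\ref{eq:28}) that the centralizer equals $L$, and finish with Proposition~\ref{prop:centralizer-is-quadratic-extension} using primitivity from Subsection~\ref{subsec:multiplication-type-primitive}. The paper's version is terser, leaving the exclusion of $E$ and $E\oplus E$ implicit, while you spell these out explicitly via the dimension count and the field-vs-zero-divisors argument.
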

\begin{proof}
  Clearly, $\End^0(J_{f,N}^\nw, \ii) \supseteq L$. By
  Lemma~\ref{lem:centralizer-is-not-matrix-algebra},
  $\End^0(J_{f,N}^\nw, \ii)\neq \Mat_2(E)$ since $N\neq 4, 10$. It
  follows from (\ref{eq:28}) that $\End^0(J_{f,N}^\nw, \ii)= L$. We
  have mentioned in Subsection~\ref{subsec:multiplication-type-primitive}
  that the multiplication type of $(J_{f,N}^\nw, \ii)$ is primitive,
  so the corollary follows from
  Proposition~\ref{prop:centralizer-is-quadratic-extension}.
\end{proof}

We also give the proof of Theorem~\ref{thm:galois-group-is-s3}. 

\begin{proof}[Proof of Theorem~\ref{thm:galois-group-is-s3}]
  It is enough to show that $\End^0(J_{f,q}^\nw, \ii)$ is simple.  The
  group $\mathbf{S}_3$ is doubly transitive and $k$ is assumed to
  contain $\qq(\zeta_q)$. This allows us to first apply \cite[Theorem
  5.13]{MR2349666}, then \cite[Lemma 3.8]{MR2471095}, and in the end,
  combining the proof of \cite[Theorem 3.12]{MR2471095} together with
  Proposition~\ref{prop:exist-value-one} to get the desired result.
\end{proof}

\begin{sect}\label{subsec:case-centralizer-is-E-plus-E}
  Suppose that $\dim X= \varphi(N)$ and $\End^0(X, \ii)=E\oplus
  E$. Then $X$ is isogenous to $Y_1\times Y_2$, and each $Y_i$ is an
  abelian variety of dimension $\varphi(N)/2$ with complex
  multiplication by $E$.  Let $p_i: E\oplus E\to E$ be the projection
  onto $i$-th factor for $i=1,2$, and $\jj_i:
  E\hookrightarrow \End^0(Y_i)$ be the composition of
  $E\xrightarrow{\ii} \End^0(X, \ii) \xrightarrow{p_i} E
  \subseteq \End^0(Y_i)$. If we write $g_i$ for the CM-type of $(Y_i,
  \jj_i)$, then $h=g_1+g_2$. By the criterion of Shimura-Taniyama
  \cite{Shimura_CM},
  \begin{itemize}
  \item $Y_1\sim Y_2$ if and only if $\exists s\in \umod{N}$ such that
    $g_1\circ \theta_s= g_2$. 
  \item $Y_i$ is simple if and only if $g_i$ is primitive. 
  \end{itemize}
  When both $Y_1$ and $Y_2$ are simple, $\End^0(Y_1)=\End^0(Y_2)=E$.
  If $Y_1\not\sim Y_2$, then
  $\End^0(X)= \End^0(Y_1)\oplus \End^0(Y_2)=E\oplus E$; otherwise
  $Y_1\sim Y_2$, and we have $\End^0(X)= \Mat_2(E)$. On the other
  hand, say $Y_1$ is not simple, then the group $\{s\in \umod{N}\mid
  g_1\circ \theta_s= g_1\}$ is nontrivial. Let $t$ be the order of
  this group, and $F$ be the its fixed subfield in
  $\qq(\zeta_N)$. Then $Y_1\sim Z^t$, where $Z$ is a simple complex
  abelian variety with complex multiplication by $F$. In particular,
  $\End^0(Y_1)=\Mat_t(F)$.
\end{sect}

  \begin{sect}\label{subsec:notation-arithmetic-result-on-cm-type}
    Since $g_i$ only takes value in $\{0,1\}$, $h(a)=0$ if and only if
    both $g_1(a)$ and $g_2(a)$ are zero. Note that $h(a)=\fl{3a/N}$
    takes value 0 for all $1\leq a< N/3$ and $\gcd(a,N)=1$, so the
    same holds for both $g_1(a)$ and $g_2(a)$.  Let $\TZ_N$ be the set
    of functions
  \begin{equation}
    \label{eq:35}
    \TZ_N=\{g: \umod{N}\to \{0, 1\}\mid g(a)+g(-a)=1,  g(a)=0 \text{ if }
    1\leq a <N/3\}.
  \end{equation}
  Suppose that $N=q=p^r$ is a prime power. We are interested in the
  set 
  \begin{equation}
    \label{eq:37}
 \SZ_q:=\{s\in \umod{q} \mid s\neq 1, \text{and } \exists g\in
  \TZ_q \text{ such that } g\circ \theta_s\in \TZ_q\}.     
  \end{equation}
Clearly, $s\in \SZ_q$ if and only if $s^{-1}\in \SZ_q$. 
  In Section~\ref{sec:arithmetic-results}, it will be shown
  \begin{equation}
    \label{eq:36}
    \SZ_q=
    \begin{cases}
      \{2, (q+1)/2\}  \qquad & \text{ if } p \geq 5;\\
      \{2, (q+1)/2, q/3-1, 2q/3-1\} \qquad & \text{ if } p=3 \text{
        and } q\geq 9;\\
      \{q/2-1\} \qquad & \text{ if } p=2 \text{ and } q\geq 16. 
    \end{cases}
  \end{equation}
  Moreover, for each $s\in \SZ_q$, there exists a unique $g\in \TZ_q$
  such that $g\circ \theta_s\in \TZ_q$. It follows that other than
  some exceptional cases, $Y_i$ are all simple. 
  \end{sect}

\begin{prop}\label{prop:simple-cm-by-cyclotomic}
  Let $q=p^r$ be a prime power with $p$ odd. Assume that $q\neq 7$,
  and $q\geq 9$ if $p=3$. Let $(Y,\jj)$ be a pair consisting a complex
  abelian variety of dimension $\varphi(q)/2$ and an embedding $\jj:
  \qq(\zeta_q)\hookrightarrow \End^0(Y)$. Suppose that the CM-type of
  $(Y, \jj)$ is given by a function $g: \umod{q}\to \{0, 1\}$ such
  that $g(a)=0 $ for all $1\leq a < q/3$.  Then $Y$ is simple with
  $\End^0(Y)\cong\qq(\zeta_q)$.
\end{prop}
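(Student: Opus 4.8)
The plan is to recognize $(Y,\jj)$ as a CM abelian variety and then read off its simplicity and endomorphism algebra from the combinatorics of the CM-type $g$, feeding in the computation of Section~\ref{sec:arithmetic-results}. First I would note that since $\dim Y=\varphi(q)/2$ and $[\qq(\zeta_q):\qq]=\varphi(q)=2\dim Y$, the pair $(Y,\jj)$ is of CM-type; by the Hodge relation (\ref{eq:hodge-decomposition}) the multiplication type $g$ satisfies $g(a)+g(-a)=1$, which together with the hypothesis that $g$ vanishes on $1\le a<q/3$ says exactly that $g\in\TZ_q$. Next, by the Shimura--Taniyama theory recalled in Subsection~\ref{subsec:case-centralizer-is-E-plus-E} (see also \cite{Shimura_CM}), $Y$ is isogenous to a power $Z^{t}$ of a simple complex abelian variety, where $t$ is the order of the stabilizer $H:=\{s\in\umod{q}\mid g\circ\theta_s=g\}$, and $\End^0(Y)\cong\Mat_t(F)$ with $F$ the subfield of $\qq(\zeta_q)$ fixed by $H$. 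Thus the entire statement reduces to proving $H=\{1\}$: for then $t=1$, $F=\qq(\zeta_q)$, and $Y$ is simple with $\End^0(Y)\cong\qq(\zeta_q)$.

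Suppose $H\ne\{1\}$. The bridge to the arithmetic is the observation that if $s\in H$ with $s\ne 1$, then $g\circ\theta_s=g$ lies in $\TZ_q$, so $s\in\SZ_q$ by the definition (\ref{eq:37}); hence $H\setminus\{1\}\subseteq\SZ_q$. Since $q$ is an odd prime power, $\umod{q}$ is cyclic, so for every prime $\ell$ dividing $|H|$ the group $H$ contains the unique order-$\ell$ subgroup of $\umod{q}$, which contributes $\ell-1$ distinct nonidentity elements to $\SZ_q$. When $p\ge 5$ we have $|\SZ_q|=2$ by (\ref{eq:36}), so $\ell-1\le 2$; when $p=3$, $\varphi(q)=2\cdot 3^{r-1}$ already forces $\ell\in\{2,3\}$. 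Either way $\ell\in\{2,3\}$, and it remains to show that neither the order-$2$ nor the order-$3$ subgroup of $\umod{q}$ lies inside $\SZ_q\cup\{1\}$.

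I expect this final check --- a brief but slightly fussy inspection of the explicit list (\ref{eq:36}) --- to be the only real obstacle, the substantive work (the determination of $\SZ_q$) having been done in Section~\ref{sec:arithmetic-results}. It comes down to two points. First, the order-$2$ element is $-1=q-1$, and using $q\ne 7$ and (for $p=3$) $q\ge 9$ one sees directly that $-1$ is none of the values $2$, $(q+1)/2$, $q/3-1$, $2q/3-1$ occurring in (\ref{eq:36}). Second, no element of (\ref{eq:36}) has multiplicative order $3$: when $p\ge 5$ an order-$3$ element in $\SZ_q=\{2,2^{-1}\}$ would force $2^3\equiv 1\pmod q$, i.e.\ $q=7$, which is excluded; and when $p=3$ the two order-$3$ elements $1\pm 3^{r-1}$ of $\umod{3^r}$ are visibly not among the four listed values once $r\ge 2$. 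In either case the existence of such an $\ell$ is contradicted, so $H=\{1\}$ and the proposition follows.
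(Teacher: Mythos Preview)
Your proof is correct and follows the same overall strategy as the paper: reduce to showing the CM-type $g\in\TZ_q$ is primitive, observe that any nontrivial element of the stabilizer $H$ must lie in $\SZ_q$, and then use the explicit determination (\ref{eq:36}) of $\SZ_q$ to force $H=\{1\}$. The paper packages this as a direct citation of Corollary~\ref{cor:primitive_cm_type} (for $p\ge 5$) and Corollary~\ref{cor:primitive-cm-type-p-is-3} (for $p=3$).

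The only difference is in the endgame combinatorics. The paper's corollaries argue more directly with specific elements: for $p\ge5$ it notes that $s\in\SZ_q$ forces $s^2=4\in\SZ_q=\{2,(q+1)/2\}$, hence $q=7$; for $p=3$ it rules out $s\in\{2,(q+1)/2\}$ the same way and then observes that $s\in\{q/3-1,2q/3-1\}$ has order $6$, so $\langle s\rangle\setminus\{1\}$ would have five elements inside the four-element set $\SZ_q$. Your route via prime-order subgroups (checking that $-1$ and, for $p=3$, the elements $1\pm 3^{r-1}$ are absent from $\SZ_q$) is a clean and perfectly valid alternative; it is slightly more systematic, while the paper's argument is a touch quicker since it avoids identifying the order-$3$ elements explicitly.
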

\begin{proof}
  We need to show that $g$ is primitive. If $p\neq 3$ and
  $q\neq 7$, this is shown in
  Corollary~\ref{cor:primitive_cm_type}. If $p=3$ and $q\geq 9$, this
  is shown in Corollary~\ref{cor:primitive-cm-type-p-is-3}.
\end{proof}

\begin{prop}\label{prop:centralizer-is-e-plus-e}
  Assume that $\dim X= \varphi(N)$, the multiplication type $h$ of
  $(X, \ii)$ is given by (\ref{eq:34}) with $n=3$, and $\End^0(X,
  \ii)=E\oplus E$. Assume further that $N=q=p^r$ is a prime power,
  $q>3$ if $p=3$, and $q>4$ if $p=2$. Then
  \begin{itemize}
  \item if $p\geq 5$ and $q\neq 5, 7$, then $\End^0(X)= E\oplus E$;
  \item if $q=5, 9$, then $\End^0(X)= \Mat_2(E)$; 
  \item if $q=7$, then $\End^0(X)= \Mat_3(\qq(\sqrt{-7}))\oplus
    \qq(\zeta_7)$. 
  \item if $q=3^r\geq 27$, then $\End^0(X)$ is either $E\oplus E$ or
    $\Mat_2(E)$. 
  \item If $q=8$, then $\End^0(X)=\Mat_2(\qq(\sqrt{-1}))\oplus
    \Mat_2(\qq(\sqrt{-2}))$. 
  \item If $q\geq 16$, then $\End^0(X)=E\oplus E$ or $E\oplus
    \Mat_2(\qq(\alpha))$, where $\alpha= 2\sqrt{-1}\sin(2\pi/q)$.  
  \end{itemize}
\end{prop}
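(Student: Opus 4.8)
The plan is to combine the CM‑type bookkeeping of Subsection~\ref{subsec:case-centralizer-is-E-plus-E} with the explicit form of $\SZ_q$ in (\ref{eq:36}). By that subsection, the hypothesis $\End^0(X,\ii)=E\oplus E$ gives $X\sim Y_1\times Y_2$ with $\jj_i\colon E\hookrightarrow\End^0(Y_i)$, $\dim Y_i=\varphi(q)/2$, and CM‑types $g_i$ of $(Y_i,\jj_i)$ satisfying $h=g_1+g_2$. Since $h(a)=\fl{3a/q}$ vanishes for $1\le a<q/3$ and $g_i(a)+g_i(-a)=1$ by (\ref{eq:hodge-decomposition}), both $g_i$ lie in $\TZ_q$; and since $h$ attains the value $1$ (Proposition~\ref{prop:exist-value-one}, as $q\notin\{3,4,6,10\}$), we have $g_1\ne g_2$. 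The subsection then tells us: $Y_i$ is simple, with $\End^0(Y_i)=E$, precisely when $g_i$ is primitive; when $g_i$ is not primitive, $\End^0(Y_i)=\Mat_{t_i}(F_i)$ with $t_i=\abs{\mathrm{Stab}(g_i)}$, $F_i=\qq(\zeta_q)^{\mathrm{Stab}(g_i)}$, where $\mathrm{Stab}(g_i)=\{s\in\umod q:g_i\circ\theta_s=g_i\}$; and $\End^0(X)$ equals $\End^0(Y_1)\oplus\End^0(Y_2)$ if $Y_1\not\sim Y_2$ and $\Mat_2(\End^0(Y_1))$ if $Y_1\sim Y_2$.

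The point is that both remaining questions reduce to $\SZ_q$. If $g_i$ is not primitive, $\mathrm{Stab}(g_i)$ is a subgroup $H\le\umod q$ with $H\smallsetminus\{1\}\subseteq\SZ_q$ (because $g_i\circ\theta_s=g_i\in\TZ_q$ for $s\in H$). If $Y_1\sim Y_2$, then $g_1\circ\theta_s=g_2$ for some $s\ne1$, hence $s\in\SZ_q$, and by the uniqueness clause of (\ref{eq:36}) $g_1$ is the unique $g^{(s)}\in\TZ_q$ with $g^{(s)}\circ\theta_s\in\TZ_q$; applying uniqueness to $s^{-1}\in\SZ_q$ gives $g_2=g^{(s)}\circ\theta_s=g^{(s^{-1})}$, so $h=g^{(s)}+g^{(s^{-1})}$. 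Moreover, uniqueness forces $g^{(s)}\circ\theta_s=g^{(s)}$ whenever $s$ has order $2$ in $\umod q$ (else $g^{(s)}$ and $g^{(s)}\circ\theta_s$ would be two distinct solutions for that $s$), so such a $g^{(s)}$ is automatically non‑primitive. Thus the proof comes down to: (i) listing the subgroups $H\le\umod q$ with $H\smallsetminus\{1\}\subseteq\SZ_q$ and identifying $\qq(\zeta_q)^{H}$; and (ii) deciding whether $h=g^{(s)}+g^{(s^{-1})}$ for some $s\in\SZ_q$ with $g^{(s)}$ primitive.

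Now I run the cases, using (\ref{eq:36}) and — since (\ref{eq:36}) omits $q=8$ — the direct computation $\SZ_8=\{3,5\}$. For $p\ge5$, $q\ne5,7$: here $\SZ_q=\{2,2^{-1}\}$, and $\{1,2,2^{-1}\}$ is not a subgroup (it would force $q\mid3$ or $q\mid7$), so every $g\in\TZ_q$ is primitive and $\End^0(Y_i)=E$; moreover $h\ne g^{(2)}+g^{(2)}\circ\theta_2$, by the interval argument: if $g^{(2)}(a)+g^{(2)}(2a)=\fl{3a/q}$ held, then for $a\in(q/2,2q/3)\cap\umod q$ it forces $g^{(2)}(a)=1$ there, while for $a\in(q/4,q/3)\cap\umod q$, whose double lands in $(q/2,2q/3)$, it forces $g^{(2)}(2a)=0$ — impossible since both intervals meet $\umod q$ once $q\ge11$; hence $Y_1\not\sim Y_2$ and $\End^0(X)=E\oplus E$. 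For $q=5,9$: no nontrivial subgroup of $\umod q$ lies in $\SZ_q\cup\{1\}$, so $\End^0(Y_i)=\qq(\zeta_q)$; but $\abs{\TZ_q}=2$, and a direct check gives $g_1\circ\theta_2=g_2$ for the unique decomposition, so $Y_1\sim Y_2$ and $\End^0(X)=\Mat_2(\qq(\zeta_q))$. For $q=7$: $\SZ_7=\{2,4\}$ and $\{1,2,4\}$ is the order‑$3$ subgroup of $\umod7$, with fixed field the quadratic subfield $\qq(\sqrt{-7})\subset\qq(\zeta_7)$; the unique decomposition $h=g_A+g_B$ has $\mathrm{Stab}(g_B)=\{1,2,4\}$ (not larger, else $g_B$ constant) and $g_A$ primitive, so $\End^0(Y_1)=\Mat_3(\qq(\sqrt{-7}))$ (with $Y_1\sim Z^3$, $Z$ a CM elliptic curve), $\End^0(Y_2)=\qq(\zeta_7)$, $Y_1\not\sim Y_2$, and $\End^0(X)=\Mat_3(\qq(\sqrt{-7}))\oplus\qq(\zeta_7)$. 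For $q=3^r\ge27$: $\SZ_q=\{2,2^{-1},q/3-1,(q/3-1)^{-1}\}$ and $\SZ_q\cup\{1\}$ contains no nontrivial subgroup (the squares of $2$ and $q/3-1$ are $4$ and $\equiv q/3+1\pmod q$, not in the set for $q\ge27$), so $\End^0(Y_i)=\qq(\zeta_q)$ and $\End^0(X)\in\{E\oplus E,\Mat_2(E)\}$ according as $Y_1\not\sim Y_2$ or $Y_1\sim Y_2$ (both occur, Examples (2b$'$), (2b)). For $q=8$: $\SZ_8=\{3,5\}$, the order‑$2$ subgroups $\{1,3\},\{1,5\}$ have fixed fields $\qq(\sqrt{-2})$, $\qq(\sqrt{-1})$; the unique decomposition has $\mathrm{Stab}(g_A)=\{1,3\}$, $\mathrm{Stab}(g_B)=\{1,5\}$, so $Y_i\sim Z_i^2$ with $\End^0(Z_1)=\qq(\sqrt{-2})$, $\End^0(Z_2)=\qq(\sqrt{-1})$, hence non‑isogenous, so $Y_1\not\sim Y_2$ and $\End^0(X)=\Mat_2(\qq(\sqrt{-1}))\oplus\Mat_2(\qq(\sqrt{-2}))$. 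For $q=2^r\ge16$: $\SZ_q=\{q/2-1\}$ with $(q/2-1)^2\equiv1\pmod q$, so $\{1,q/2-1\}$ is the only nontrivial subgroup inside $\SZ_q\cup\{1\}$, fixed field $\qq(\alpha)$ with $\alpha=2\sqrt{-1}\sin(2\pi/q)$ (the subfield fixed by $\zeta_q\mapsto-\zeta_q^{-1}$); at most one $g_i$ is non‑primitive (if both were, both equal $g^{(q/2-1)}$, forcing $g_1=g_2$), so $\End^0(X)=E\oplus E$ when both $g_i$ are primitive (then $Y_1\sim Y_2$ would make $g_1=g^{(q/2-1)}$, which is non‑primitive — impossible) and $\End^0(X)=E\oplus\Mat_2(\qq(\alpha))$ when exactly one $g_i$ is non‑primitive.

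The hardest step is the arithmetic one isolating the range $p\ge5$, $q\ne5,7$, namely $h\ne g^{(2)}+g^{(2)}\circ\theta_2$: this requires enough control on the shape of $g^{(2)}$, not merely its existence and uniqueness — the needed input is packaged together with (\ref{eq:36}) in Section~\ref{sec:arithmetic-results}, combined with the interval computation sketched above. Everything else is routine: the four fixed‑field identifications $\qq(\zeta_7)^{\{1,2,4\}}=\qq(\sqrt{-7})$, $\qq(\zeta_8)^{\{1,3\}}=\qq(\sqrt{-2})$, $\qq(\zeta_8)^{\{1,5\}}=\qq(\sqrt{-1})$, $\qq(\zeta_q)^{\{1,q/2-1\}}=\qq(2\sqrt{-1}\sin(2\pi/q))$ (short calculations with explicit roots of unity), the determination of $\SZ_8$, and the unique decompositions when $\abs{\TZ_q}=2$ (the cases $q=5,7,8,9$).
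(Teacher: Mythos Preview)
Your proof is correct and follows essentially the same strategy as the paper: decompose $h=g_1+g_2$ with $g_i\in\TZ_q$, use the description of $\SZ_q$ from Section~\ref{sec:arithmetic-results} to determine $\mathrm{Stab}(g_i)$ (hence $\End^0(Y_i)$), and then decide whether $Y_1\sim Y_2$. The only noticeable difference is in the verification that $h\ne g+g\circ\theta_2$ for $p\ge5$, $q\neq 5$: the paper (Lemma~\ref{lem:splitting_for_2_twist}) picks a single odd test point $a_0\in(q/3,q/2)$ using the explicit formula~(\ref{eq:38}) for $g^{(2)}$, whereas you derive the contradiction directly from $h$ via the two intervals $(q/2,2q/3)$ and $(q/4,q/3)$ without invoking~(\ref{eq:38}); both arguments are equally short.
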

\begin{proof}
  First suppose that $p\geq 5$. By
  Lemma~\ref{lem:splitting_for_2_twist}, $Y_1\not\sim Y_2$ if $q\neq
  5$.  By Proposition~\ref{prop:simple-cm-by-cyclotomic}, both $Y_1$
  and $Y_2$ are simple if $q\neq 7$. It follows that $\End^0(X)=
  E\oplus E$ when $q\neq 5, 7$. If $q=5$, then $g_1$ and $g_2$ are
  uniquely determined (up to relabeling) by $h$, and $g_1=g_2\circ
  \theta_2$ by Remark~\ref{rem:case-q-is-5}. So $Y_1\sim Y_2$, and
  $\End^0(X)=\Mat_2(\qq(\zeta_5))$. Similarly, if $q=7$, by
  Remark~\ref{rem:case-q-is-7}, $g_1$ and $g_2$ are uniquely
  determined by $h$ up to relabeling. One checks that $g_2$ is
  primitive, hence $\End^0(Y_2)=\qq(\zeta_7)$; and
  \[ g_1\circ \theta_s= g_1 \Leftrightarrow s\in \dangle{2} \leq
  \umod{7}.\] So the CM-type $g_1$ is induced from $\qq(\sqrt{-7})$,
  the fixed subfield of $\qq(\zeta_7)$ by $\dangle{2}\leq
  \Gal(\qq(\zeta_7)/\qq)$.  Therefore, $Y_1\sim Z^3$, where $Z$ is an
  elliptic curve with complex multiplication by $\qq(\sqrt{-7})$, and
  $\End^0(Y_1)= \Mat_3(\qq(\sqrt{-7}))$.

  If $p=3$ and $q=3^r\geq 9$, then by
  Proposition~\ref{prop:simple-cm-by-cyclotomic}, both $Y_1$ and $Y_2$
  are simple and $\End^0(Y_i)=E$. If $q=9$, then by
  Remark~\ref{rem:isogeny-q-is-9}, there is a unique way (up to
  labeling) to write $h=g_1+g_2$ , and $g_1=g_2\circ
  \theta_2$. Therefore, $Y_1\sim Y_2$, and $\End^0(X)=\Mat_2(E)$. If
  $q\geq 27$, $\End^0(X)$ depends on the specific form of
  $g_i$. Suppose that $g_1$ is of the form given by
  (\ref{eq:29}). Then $g_2= g_1\circ \theta_s$ with $s=q/3-1$. So
  $Y_1\sim Y_2$ and $\End^0(X)= \Mat_2(E)$. Otherwise, $Y_1\not\sim
  Y_2$ by Lemma~\ref{lem:not-isogenous-q-power-of-3}, and
  $\End^0(X)=E\oplus E$.
 
  If $q=8$, once again, $g_1$ and $g_2$ are uniquely determine up to
  labeling. By Remark~\ref{rem:case-q-is-8}, $g_1\circ \theta_5=g_1$,
  and $g_2= g_2\circ\theta_3$.  The fixed subfield of
  $\qq(\zeta_8)=\qq(\sqrt{-1}, \sqrt{-2})$ by $\dangle{5}\leq
  \umod{8}$ is $\qq(\sqrt{-1})$, so $\End^0(Y_1)=
  \Mat_2(\qq(\sqrt{-1}))$ and $Y_1$ is isogenous to a square of an
  elliptic curve with complex multiplication by
  $\qq(\sqrt{-1})$. Similarly, $Y_2$ is isogenous to the square of an
  elliptic curves with complex multiplication by
  $\qq(\sqrt{-2})$. Therefore, $\End^0(X)= \Mat_2(\qq(\sqrt{-1}))\oplus
    \Mat_2(\qq(\sqrt{-2}))$.

 If $q=2^r\geq 16$, then $\End^0(X)$ depends on the
 specific form of $g_i$'s. If $g_1$ is of the form given in
 (\ref{eq:30}), by Lemma~\ref{lem:q-is-a-power-of-2}, 
 \[ g_1\circ \theta_s= g_1 \Leftrightarrow s=1, 2^{r-1}-1.\] One
 easily checks that the fixed subfield of $\qq(\zeta_q)$ by
 $\dangle{2^{r-1}-1}$ is $\qq(\alpha)$. By
 Lemma~\ref{lem:q-is-a-power-of-2} again, $g_2=h-g_1$ is primitive. So
 $\End^0(X) = \Mat_2(\qq(\alpha))\oplus E$. If neither $g_1$ nor $g_2$
 is of the form in (\ref{eq:30}), then both $g_i$ are primitive, and
 $Y_1\not\sim Y_2$ by
 Lemma~\ref{lem:not-isogenous-q-is-power-of-2}. Therefore,
 $\End^0(X)=E\oplus E$.
\end{proof}


\begin{sect}
  Recall that $C_{\lambda, q}$ denotes the curve
  \[ y^q=x(x-1)(x-\lambda),\] where $\lambda$ lies on the punctured
  complex plane with the points $0$ and $1$ removed.  Fix $\lambda$
  such that $J_{q,\lambda}^\nw$ is not of CM-type (Such a $\lambda$
  exists if $q\neq 4$). We may construct a
  Shimura datum $(G, X)$ from $J_{q,\lambda}^\nw$ in the following
  way. Let $V$ be the $\qq$-vector space $H_1(J_{q,\lambda}^\nw,
  \qq)$. It carries a natural structure of $\qq(\zeta_q)$-vector
  spaces of dimension $2$. The canonical principal polarization on
  $J(C_{\lambda,q})$ induces on $V\subseteq H_1(C_{\lambda, q}, \qq)$
  a nondegenerate alternating $\qq$-bilinear form $\psi$ which
  satisfies the condition
  \[ \psi(eu,v)= \psi(u, \bar{e}v), \qquad \forall e\in \qq(\zeta_q),
  \forall u, v\in V. \] Let $\CSp(V_\qq, \psi)$ be the group of
  symplectic similitudes of $\psi$, and \[ G=\GL_{\qq(\zeta_q)}(V)\cap
  \CSp(V, \psi).  \] Let $\mathbb{S}:=\Res_{\cc/\rr}\mathbb{G}_m$ be
  the Deligne torus, and $h_0: \mathbb{S}\to G\otimes_\qq\rr$ the
  homomorphism of $\rr$-algebraic groups that defines the Hodge
  structure on $V$. We set $X$ to be the $G(\rr)$-conjugacy class of
  $h_0$. Let $\mathbb{A}^f$ be the finite adeles of $\qq$, and $K$ a
  compact open subgroup of $G(\mathbb{A}^f)$.  By the moduli
  interpretation of Shimura varieties of PEL-type (\cite[Scholie
  4.11]{MR0498581}), the classifications of the endomorphism algebra
  in Theorem~\ref{thm:main} holds for any abelian variety $A$ (with
  additional structure) corresponding to a complex point on the
  Shimura variety
  \[ \mathrm{Sh}_K(G, X):= G(\qq)\backslash X\times
  G(\mathbb{A}^f)/K.\] Indeed, \cite[Scholie 4.11(a)]{MR0498581} shows
  that the general multiplication type of $\qq(\zeta_q)$ on
  $\Lie_\cc(A)$ coincides with that of $J_{\lambda,q}^\nw$ for all
  such $A$, and Theorem~\ref{thm:main} was obtained purely by studying
  the general multiplication types.  For the same reason, we may
  replace $G$ by the group $G_1$ defined in \cite[Scholie
  4.13]{MR0498581} and obtain a similar result. 
\end{sect}


\section{Automorphisms and construction of examples}\label{sec:autom-constr-exampl}
Throughout this section, we assume that $k$ is an algebraically closed
field of characteristic zero, $f(x)\in k[x]$ is a monic polynomial of
degree 3 with no multiple roots.

\begin{sect}
  Suppose that $\gcd(N,3)=1$, and
  $f(x)=\prod_{i=1}^3(x-\alpha_i)$. The set of fixed points of
  $\delta_N$ on $C_{f,N}(\bar{k})$ is
\[ \SV:=\{ P_i:=(\alpha_i, 0)\}_{i=1}^3\cup \{\infty\},\]
where $\infty$ is the unique point at infinite for $C_{f,N}$. Let
$\ddiv$ denote the divisor of a function. Then
\begin{gather}
  \ddiv y =  P_1+P_2+P_3-3\infty,\\
  \ddiv (x-\alpha_i)= N P_i- N\infty.
\end{gather}  
Choose $s, t\in \zz$ such that $3s+Nt=1$. We have
  \begin{equation}
    \label{eq:31}
    \begin{split}
    \ddiv y^s(x-\alpha_1)^t& = s(P_1+P_2+P_3-3\infty)+ t
    (NP_1-N\infty)\\
    &= s(P_1+P_2+P_3)+tNP_1 - \infty      .
    \end{split}
 \end{equation}

\end{sect}

\begin{sect}
  Let $\Aut(C_{f,N})$ be the absolute automorphism group of
  $C_{f,N}$. We write $H$ for the normalizer of $\dangle{\delta_N}$ in
  $\Aut(C_{f,N})$. Suppose that $H\neq \dangle{\delta_N}$. We consider
  an element $\phi$ in $H$ but not in $\dangle{\delta_N}$.  Then
  $\phi$ permutes elements of $\SV$.  We claim that $\phi \infty=
  \infty$ if $N\neq 2, 4$. Otherwise, say $\phi^{-1} \infty = P_1$,
  then $P_1\neq \phi^{-1} P_i$ for all $1\leq i\leq 3$.  Without lose
  of generality, we assume that $\phi^{-1} P_2\neq \infty$. Note that
  \begin{equation}
    \label{eq:32}
 \ddiv \phi^*y= \phi^{-1} P_1 + \phi^{-1} P_2 + \phi^{-1} P_3 - 3P_1.    
  \end{equation}
  Using (\ref{eq:31}) to replace $\infty$ on the right hand side of
  (\ref{eq:32}), we get an divisor supported on $\{P_1, P_2, P_3\}$
  that's linear equivalent to zero. By \cite[Lemma 2.7]{xue_JNT}, a
  divisor of degree 0 supported on $\{P_1, P_2, P_3\}$ is linear
  equivalent to zero if and only if all coefficients of the $P_i$'s
  are congruent to each other modulo $N$. (A priori, \cite[Lemma
  2.7]{xue_JNT} only proved the statement for the case $N$ is a prime
  power, however, the same argument applies for any arbitrary $N$.)
  Comparing the coefficient of $\phi^{-1} P_2$ and $P_1$, we see that
  $1\equiv -3 \pmod{N}$, which contradicts the assumption that $N\neq
  2,4$.
\end{sect}

\begin{sect}
  Suppose that $\gcd(N,3)=1$ and $N\nmid 4$. Let $k(C_{f,N})$ be the
  field of rational functions of $C_{f,N}$.  The fixed subfield of
  $k(C_{f,N})$ by $\dangle{\delta_N}$ is $k(x)$, and every element of
  $H$ sends $k(x)$ to itself, therefore, the action of $H$ on $k(x)$
  induces an embedding $H/\dangle{\delta_q}\subseteq \Aut(k(x)/k)$.  It
  is well known (cf. \cite[Corollary 6.65]{MR2674831}) that
  $\Aut(k(x)/k)$ is isomorphic to the group of all linear fractional 
  transformations over $k$. Since a linear fractional transformation
  is uniquely determined by its image on any three distinct points, we
  see that $H/\dangle{\delta_q}\hookrightarrow \Perm{\{P_1, P_2, P_3\}}\cong
  \mathbf{S}_3$.

  Let $\phi_x\in H/\dangle{\delta_q}$ be the automorphism of $k(x)$
  induced by $\phi^*: k(C_{f,N})\to k(C_{f,N})$. Since $\phi\not\in
  \dangle{\delta_N}$, $\phi_x$ is nontrivial, so the order of $\phi_x$
  is either $2$ or $3$.  Since $\phi(\infty) = \infty$,
  $\phi_x(x)=tx+b$, for some $t, b\in k$. Therefore, 
\[ \phi_x^2(x)=t^2x+tb+b, \qquad \phi_x^3(x)= t^3x+(1+t+t^2)b.\]
If $\phi_x$ has order $2$, then $t=-1$, and if $\phi_x$ has order $3$,
$t = \omega$, where $\omega$ is a primitive 3rd root of unity. By
changing the $x$ coordinate appropriately, we may assume that
$\phi_x(x)=-x$ or $\phi_x(x)=\omega x$ respectively. More explicitly,
if $\ord\phi_x=2$, we replace $x$ by $x-b/2$, and if $\ord\phi_x=3$,
we replace $x$ by $x-b(1-\omega^2)/3$.

Now since $\phi$ permutes the points $P_1, P_2, P_3$ and fixes
$\infty$,

\[
\begin{split}
\ddiv \phi^* f(x) &= \phi^{-1} (\ddiv f(x))= \phi^{-1} (\ddiv y^N)= \phi^{-1} N( P_1+P_2+P_3 -3\infty)\\&=   N( P_1+P_2+P_3 -3\infty )= \ddiv f(x).   
\end{split}
 \]

 In particular, $\phi^* f(x)= c f(x)$ for some $c\in k^*$. On the
 other hand, $\phi_x(f(x))= f(\phi_x x)$. Comparing the coefficients,
 we see that $f(x)=x^3+B_0x$ if $\phi_x$ has order $2$, and $f(x)=
 x^3+C_0$ if $\phi_x$ has order $3$. In both cases, the coefficient of
 $x^2$ is zero.  We note that for $f(x)=x^3+A_0x^2+B_0x+C_0$, there
 exists a unique $b\in k$ such that the coefficient of $x^2$ in
 $f(x-b)$ is zero. Indeed, $b=A_0/3$. It follows that
 $H/\dangle{\delta_N}$ does not contain both an element of order $2$
 and an element of order $3$. In other words, $H/\dangle{\delta_N}\neq
 \mathbf{S}_3$.

If $\phi_x$ has order 2, $\phi^*(f(x))= -f(x)$. It follows that
$\phi^*(y^N)= - y^N$. If $N$ is odd, then $\phi^*(y)=
-\zeta_N^iy$ for some $i\in \zmod{N}$.  If $N$ is even, then
$\phi^*(y) = (\zeta_{2N})^{2i+1}y$ for some $i \in \zmod{N}$. If
$\phi_x$ has order $3$, then $\phi^*(f(x))=f(x)$, so
$\phi^*(y^N)= y^N$. Therefore, $\phi^*(y)= \zeta_N^i y$
for some $i\in \zmod{N}$.
\end{sect}

\begin{thm}\label{thm:normalizer-automorphism}
  Let $k$ be an algebraically closed field of characteristic zero, and
  $f(x)\in k[x]$ a monic polynomial of degree 3 with no multiple
  roots. After a unique change of variable of the form $x\mapsto x-b$
  for some $b\in k$, we may and will assume that $f(x)=x^3+B_0x+C_0$.
  Suppose that $(N, 3)=1$ and $N\neq 2, 4$. Let $H\leq \Aut(C_{f,N})$
  be the normalizer of $\dangle{\delta_N}$. We have the following
  cases: 
  \begin{itemize}
  \item if $B_0C_0\neq 0$, then $H=\dangle{\delta_N}\cong \zmod{N}$;
  \item if $B_0=0$ and $C_0\neq 0$, then $H\cong \zmod{3N}$, and $H$
    is generated by the automorphism $(x,y)\mapsto (\omega x,
    \zeta_Ny)$;
  \item if $B_0\neq 0$ and $C_0=0$, then $H\cong \zmod{2N}$, and $H$
    is generated by the automorphism $(x,y)\mapsto (-x,
    \zeta_{2N}y)$. 
  \end{itemize}
\end{thm}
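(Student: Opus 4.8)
The plan is to assemble the three subsections preceding the statement. For the normalized cubic $f(x)=x^3+B_0x+C_0$ they already establish: (a) any $\phi\in H$ with $\phi\notin\dangle{\delta_N}$ fixes the point $\infty$; (b) $H/\dangle{\delta_N}$ embeds into $\Perm\{P_1,P_2,P_3\}\cong\mathbf{S}_3$, with $\phi$ acting through the induced automorphism $\phi_x$ of $k(x)$; (c) such a $\phi_x$ has order $2$ or $3$, and — using the uniqueness (recorded there) of the translation killing the $x^2$-coefficient — in our coordinates $\phi_x(x)=-x$ forces $f=x^3+B_0x$ while $\phi_x(x)=\omega x$ forces $f=x^3+C_0$; and (d) $H/\dangle{\delta_N}\neq\mathbf{S}_3$. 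Since every proper subgroup of $\mathbf{S}_3$ is cyclic of order $1$, $2$ or $3$, and since such a subgroup is nontrivial only if some $\phi_x$ has order $2$ or $3$, it remains in each case to decide which option occurs and, when it is nontrivial, to produce a generator of $H$. In particular, when $B_0C_0\neq 0$, the existence of a $\phi\in H\setminus\dangle{\delta_N}$ would by (c) force $C_0=0$ or $B_0=0$; hence $H=\dangle{\delta_N}\cong\zmod{N}$.

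Suppose $B_0=0$ and $C_0\neq 0$, so $f=x^3+C_0$. I would check directly that $\psi\colon(x,y)\mapsto(\omega x,\zeta_N y)$ carries the equation $y^N=x^3+C_0$ to itself (since $\omega^3=1$ and $\zeta_N^N=1$) and commutes with $\delta_N$, so $\psi\in H$. Taking powers gives $\ord(\psi)=\lcm(3,N)=3N$, and — here invoking $\gcd(N,3)=1$ — some power of $\psi$ equals $\delta_N$, so $\dangle{\delta_N}\subseteq\dangle{\psi}$ with index $3$. Then $H/\dangle{\delta_N}$ contains an element of order $3$ and, being a proper subgroup of $\mathbf{S}_3$, coincides with $\dangle{\psi}/\dangle{\delta_N}$; hence $H=\dangle{\psi}\cong\zmod{3N}$. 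The case $B_0\neq 0$, $C_0=0$ (so $f=x^3+B_0x$) runs in parallel with $\psi\colon(x,y)\mapsto(-x,\zeta_{2N}y)$: it preserves $y^N=x^3+B_0x$ because $\zeta_{2N}^N=-1$, commutes with $\delta_N$, and satisfies $\psi^2=\delta_N$ and $\ord(\psi)=2N$, so $\dangle{\delta_N}\subseteq\dangle{\psi}$ with index $2$; as before $H/\dangle{\delta_N}$ then contains an order-$2$ element and equals $\zmod{2}$, whence $H=\dangle{\psi}\cong\zmod{2N}$.

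Most of the substance is already in the preceding subsections, so I do not anticipate a serious obstacle. The two points meriting care are the compatibility of the coordinate normalization used in (c) with the one fixed in the statement — secured by the uniqueness of the normalizing translation recorded there — and the verification that $\dangle{\psi}\supseteq\dangle{\delta_N}$, which in the $3N$ case genuinely uses the hypothesis $\gcd(N,3)=1$. Everything else is routine: the direct checks that $\psi$ is an automorphism commuting with $\delta_N$, the order computations, and the elementary list of subgroups of $\mathbf{S}_3$.
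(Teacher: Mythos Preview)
Your proposal is correct and follows essentially the same approach as the paper: the theorem is stated as a summary of the analysis in the three preceding subsections, and your wrap-up—exhibiting the explicit generator $\psi$ in each nontrivial case and using the subgroup structure of $\mathbf{S}_3$ together with $H/\dangle{\delta_N}\neq\mathbf{S}_3$—is exactly the intended conclusion. The paper's closing computation of $\phi^*(y)$ at the end of the third subsection amounts to the same thing as your direct verification that $\psi$ is an automorphism of the required form.
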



\begin{sect}\label{subsec:lambda}
  Suppose that $k=\cc$, and $f(x)=x(x-1)(x-\lambda)=
  x^3-(1+\lambda)x^2+\lambda x$. We take $b=-(1+\lambda)/3$, then
\[f\left(x+\frac{1+\lambda}{3}\right)=x^3+\left(\frac{\lambda-1-\lambda^2}{3}\right)x
-\frac{(1+\lambda)(\lambda-2)(2\lambda-1)}{27}.
\]
If it is of the form $x^3+C_0$, then $\lambda= (1\pm \sqrt{-3})/2$; if
it is of the form $x^3+B_0x$, then $\lambda= -1, 2, 1/2$.   
\end{sect}

\begin{proof}[Proof of Corollary~\ref{cor:automorphism}]
It was remarked in
Subsection~\ref{subsec:general-results-about-curves} that 
\[ \Aut(C_{f,q})\subseteq \Aut(J(C_{f,q}))
\subseteq \End(J(C_{f,q})). \] By Theorem~\ref{thm:main},
$\End^0(J(C_{f,q}))$ is commutative if $p>7$, and hence $\Aut(C_{f,q})$
coincides with the normalizer of $\dangle{\delta_q}$. So
Corollary~\ref{cor:automorphism} follows directly from
Theorem~\ref{thm:normalizer-automorphism} and
Subsection~\ref{subsec:lambda}. 
\end{proof}

For the rest of this section, we will try to construct examples of
$J_{f,N}^\nw$ whose endomorphism algebras take the form as predicted
by Proposition~\ref{prop:centralizer-is-quadratic-extension} or
Proposition~\ref{prop:centralizer-is-e-plus-e}. Our  method is to
use curves with extra automorphisms. 

 \begin{sect}
   Consider the curve $C_{f_1, N}$ with $f_1(x)=x^3-x$, $3\nmid N$,
   $N\in 2\zz$ even, and $N\not\in \{4, 10\}$. Let $\gamma_{2N}\in
   \Aut(C_{f_1,N})$ be the automorphism defined by $(x,y)\mapsto (-x,
   \zeta_{2N}y)$. Clearly, $\gamma_{2N}^2= \delta_N$. So
   $J_{f_1,N}^\nw$ is $\gamma_{2N}$-invariant, and we have an
   embedding
   \[ \qq(\zeta_{2N})\cong
   \qq(\zeta_N)[T]/(T^2-\zeta_N)\hookrightarrow \End^0(J_{f_1,
     N}^\nw), \qquad T\mapsto \gamma_{2N}\mid_{ J_{f_1, N}^\nw}.\] It
   follows from Corollary~\ref{cor:nw-factor-of-jacobian-quad-ext}
   that $J_{f_1,N}^\nw$ is absolutely simple with
   $\End^0(J_{f_1,N}^\nw)=\qq(\zeta_{2N})$.
 \end{sect}
 \begin{sect}
   Let $f_1$ be as above, and $N=q=p^r$ be a prime power with $p$
   odd. If $p=3$, we assume that $q\geq 9$. Let $\gamma_2\in
   \Aut(C_{f_1, q})$ be the automorphism defined by $(x,y)\mapsto (-x,
   -y)$, then $\gamma_2$ commutes with $\delta_N$, so $J_{f_1, q}^\nw$
   is $\gamma_2$ invariant. With an abuse of notation, we still write
   $\gamma_2$ for the restriction $\gamma_2\mid_{J_{f_1,
       q}^\nw}\in \End(J_{f_1, q}^\nw)$. Clearly, $\gamma_2^2=\Id$. We
   claim that $\gamma_2\neq \pm 1$.  Let $b= (q-1)/2$, and $c=
   (q+1)/2$, then $\gcd(b,q)=\gcd(c,q)=1$.  By
   (\ref{eq:basis-of-diff-first-kind}), both $dx/y^b$ and $dx/y^c$ are
   differentials of first kind on $C_{f_1, q}$. Clearly they are
   eigenvectors 
   corresponding to distinct eigenvalues for
   $\gamma_2^*:\Gamma(C_{f_1,q}, \Omega^1_{C_{f_1,q}})\to 
   \Gamma(C_{f_1,q}, \Omega^1_{C_{f_1,q}})$.  It follows
   from (\ref{eq:24}) and
   Subsection~\ref{subsec:Serre-duality-differential} that both $1$
   and $-1$ are eigenvalues of $d\gamma_2: \Lie_k(J_{f_1, q}^\nw)\to
   \Lie_k(J_{f_1, q}^\nw)$. Let $e_1:=
   (1+\gamma_2)/2\in \End^0(J_{f_1,q}^\nw)$, and
   $e_2:=(1-\gamma_2)/2\in \End^0(J_{f_1,q}^\nw)$. Both $e_1$ and
   $e_2$ are nontrivial idempotents of $\End^0(J_{f_1,q}^\nw,\ii)$
   with $e_1+e_2=1$. 
   It follows that $\End^0(J_{f_1,q}^\nw,\ii)=E\oplus E$. By
   Proposition~\ref{prop:centralizer-is-e-plus-e},
  \begin{itemize}
  \item if $p\geq 5$ and $q\neq 5, 7$, then $\End^0(J_{f_1,q}^\nw)= E\oplus E$;
  \item if $q=5, 9$, then $\End^0(J_{f_1,q}^\nw)= \Mat_2(E)$; 
  \item if $q=7$, then $\End^0(J_{f_1,q}^\nw)=
    \Mat_3(\qq(\sqrt{-7}))\oplus \qq(\zeta_7)$.
  \end{itemize}


  We claim that $\End^0(J_{f_1,q}^\nw)=E\oplus E$ if $q=3^r\geq
  27$. Let $Y_i=2e_i J_{f_1, q}^\nw$. Then $\Lie(Y_1)$ is the subspace
  of $\Lie(J_{f_1, q}^\nw)$ on which $d\gamma_2$ acts as identity.  On
  the other hand, if $q/3< a <2q/3$, then $dx/y^a$ is
  $\gamma_2^*$-invariant if and only if $a$ is odd. It follows that
  $g_1$ (using notation in
  Subsection~\ref{subsec:case-centralizer-is-E-plus-E}) is given by
  (\ref{eq:38}). By Lemma~\ref{lem:not-isogenous-q-power-of-3},
  $g_2\neq g_1\circ \theta_s$ for any $s\in \umod{q}$, so
  $\End^0(J_{f_1,q}^\nw)=E\oplus E$ as claimed.
 \end{sect}



\begin{sect}\label{subsec:cube-x-plus-one}
  Consider the curve $C_{f_0,N}$ with $f_0(x)=x^3+1$, $3\nmid N$, and
  $N\neq 4, 10$. 
  Let $\gamma_3: C_{f_0,N}\to C_{f_0,N}$ be the automorphism defined by
  $(x,y)\mapsto (\omega x, y)$ for a primitive $3$rd root of unity
  $\omega\in k$.  As $C_{f_0,N}/\dangle{\gamma_3}\cong \pp^1$,
  it follows from (\ref{eq:17}) that $\gamma_3 \in \End
  (J(C_{f_0,N}))$ satisfies the polynomial equation $T^2+T+1=0$. Since
  $\gamma_3$ commutes with $\delta_N$, $J_{f_0,N}^\nw$ is
  $\gamma_3$-invariant, and we have an embedding
  \[ \qq(\zeta_{3N})\cong
  \qq(\zeta_N)[T]/(T^2+T+1)\hookrightarrow \End^0(J_{f_0,N}^\nw),
  \qquad T\mapsto \gamma_3\mid_{J_{f_0,N}^\nw}.\] Now it follows from
  Corollary~\ref{cor:nw-factor-of-jacobian-quad-ext} that
  $J_{f_0,N}^\nw$ is absolutely simple with
  $\End^0(J_{f_0,N}^\nw)=\qq(\zeta_{3N})$.

  This result in itself is not new. A theorem of Kubota-Hazama
  \cite{MR1458755} states that the Jacobian variety of the curve $y^p
  = x^\ell+1$ is absolutely simple if $p$ and $\ell$ are distinct
  primes.  In particular, it follows that if $N=p\neq 3$ is a prime,
  then $J_{f_0, p}^\nw=J(C_{f_0, p})$ is simple. More generally, one
  could realize $C_{f_0, N}$ as a quotient of the Fermat curve
  $X_{3N}: x^{3N}+y^{3N}=1$. Therefore, $J_{f_0, N}^\nw$ is isogenous
  to a factor of $J(X_{3N})$. One checks that it corresponds to the
  triple $(N, 3, 2N-3)\in (\zmod{3N})^3$. There is only one entry
  (namely, $2N-3$) that's coprime to $3N$.  So it is of Type I in
  Aoki's classification, and hence simple if $N>60$ (cf. \cite[Theorem
  0.2]{MR1129293}).
 \end{sect}

 \begin{sect}
   Let $f_0$ and $\gamma_3$ be as in
   Subsection~\ref{subsec:cube-x-plus-one}, and $N=q=3^r>3$. We may
   assume that $\omega=\zeta_q^{3^{r-1}}$.  Let $\delta_3:=
   \delta_q^{3^{r-1}}\in \Aut(C_{f_0,q})$.  By
   (\ref{eq:basis-of-diff-first-kind}), both $dx/y^{q-1}$ and
   $xdx/y^{q-1}$ are differentials of the first kind on $C_{f,q}$, and
   they are eigenvectors corresponding to eigenvalue $\omega$ for
   $\delta_3^*: \Gamma(C_{f_0, q}, \Omega_{C_{f_0,q}}^1)\to
   \Gamma(C_{f_0, q}, \Omega_{C_{f_0,q}}^1)$.¡¡ On the other hand,
   $\gamma_3^* (dx/y^{q-1})= \omega (dx/y^{q-1})$, and $\gamma_3^*
   (xdx/y^{q-1})= \omega^2 (xdx/y^{q-1})$.  In other words, they
   corresponds to distinct eigenvalues for $\gamma_3^*$.  Let
   \[ e_1= \frac{1}{3}(1+\delta_3^2\gamma_3+\delta_3\gamma_3^2),
   \qquad e_2= \frac{1}{3}(1+ \delta_3\gamma_3+
   \delta_3^2\gamma_3^2) \] be elements of $\End^0(J_{f_0,
     q}^\nw)$. Using the fact that both $\delta_3$ and $\gamma_3$
   satisfy the equation $T^2+T+1=0$, one sees that $e_1$ and $e_2$
   are orthogonal idempotents with $e_1+e_2=1$, and it follows from
   (\ref{eq:24}) and
   Subsection~\ref{subsec:Serre-duality-differential} that neither
   $e_1$ nor $e_2$ is zero. Therefore, $\End^0(J_{f_0, q}^\nw,
   \ii)=E\oplus E$.  Let $Y_1:= 3e_1 J_{f_0, q}^\nw$, and $Y_2:= 3e_2
   J_{f_0, q}^\nw$. Then $\Lie(Y_1)$ is the subspace of
   $\Lie(J_{f_0,q}^\nw)$ on which $d(\delta_3^2\gamma_3)$ acts as
   identity.  If $q/3<a<2q/3$ and $3\nmid a$, then $dx/y^a$ is
   invariant under $(\delta_3^2\gamma_3)^*$ if and only if $a\equiv
   2\pmod{3}$.  We see that $h=g_1+g_2$ with $g_1$ given by
   (\ref{eq:29}). Since $g_2=g_1\circ \theta_s$ with $s=3^{r-1}-1$,
   $Y_2$ is isogenous to $Y_1$, and $\End^0(J_{f_0,
     q}^\nw)=\Mat_2(E)$.

   Alternatively, one can see that $\End^0(J_{f_0, q}^\nw)=\Mat_2(E)$
   in the following way. Without lose of generality, assume that
   $k=\cc$. Let $X_q$ be the Fermat curve $x^q+y^q=1$. There exists an
   cover $X_q\to C_{f_0,q}$ given by $(x,y)\mapsto (-x^{3^{r-1}},
   y)$. Therefore, $J_{f_0, q}^\nw$ is isogenous to a factor of
   $J(X_q)$. Using notations of \cite{MR511556}, one sees that
   $J_{f_0, q}^\nw$ is isogenous to $\cc/L_{r,s,t}\times
   \cc/L_{r',s',t'}$ with $(r, s, t)=(2\cdot 3^{r-1}, 1, 3^r- 2\cdot
   3^{r-1}-1)\in (\zmod{q})^3$ and $(r', s', t')=( 3^{r-1}, 1, 3^r-
   3^{r-1}-1)\in (\zmod{q})^3$.  Since $(ur,us,ut)$ coincides with
   $(r', s',t')$ up to permutation with $u=3^{r-1}+1\in \umod{q}$,
   $\cc/L_{r,s,t} \sim \cc/L_{r',s',t'}$.
 \end{sect}

\section{Arithmetic Results}
\label{sec:arithmetic-results}
In this section, we prove the arithmetic results mentioned
Section~\ref{sec:compl-abel-vari}.  For two real numbers $x\leq y$,
let $[x, y]_\zz$ be the set of integers \[ [x, y]_\zz:=\{z\in \zz\mid
x\leq z \leq y, \gcd(z,N)=1\}.\] Through out this section, $n\geq 3$
is an integer that's not a multiple of $N$, and $h$ denotes the
function
\[h: \umod{N}\to \nn, \qquad a\mapsto \fl{\frac{na}{N}},\] where $a$
is taken to be in $[1,N-1]_\zz$ for the floor function.  We have
\begin{equation}
  \label{eq:4}
 h(a)+h(-a)= n-1.
\end{equation}
We are particularly interested in the case $n=3$, where
\[h(a)=
\begin{cases}
  0 \qquad &\text{ if } \quad a\in [1, N/3]_\zz;\\
  1 \qquad &\text{ if } \quad a\in [N/3,2N/3]_\zz;\\
  2 \qquad &\text{ if } \quad a\in [2N/3,N-1]_\zz.
\end{cases}
\]

\begin{prop} \label{prop:exist-value-one} Suppose that $n=3$, and
  $N\nmid n$. There exists $a\in \umod{N}$ such that $h(a)=1$ if and
  only if $N\not\in \{4, 6, 10\}$. 
\end{prop}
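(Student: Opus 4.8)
The plan is to interpret the condition $h(a)=1$ via the definition of the floor function and reduce it to a counting question about the set $[N/3, 2N/3]_\zz$. Indeed, for $a \in [1, N-1]$ with $\gcd(a,N)=1$, we have $h(a) = \fl{3a/N} = 1$ precisely when $N/3 \le a < 2N/3$, i.e. when $a \in [N/3, 2N/3]_\zz$. So the statement ``there exists $a \in \umod{N}$ with $h(a)=1$'' is equivalent to ``$[N/3, 2N/3]_\zz \ne \emptyset$,'' i.e. there exists an integer $a$ coprime to $N$ with $N/3 \le a < 2N/3$. The proposition then asserts this holds if and only if $N \notin \{4, 6, 10\}$ (recall $N \ge 2$ throughout, and we have assumed $N \nmid 3$, so $N \ne 3$ need not be separately excluded but $N=3$ would also fail).

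First I would dispose of the three exceptional values by direct inspection: for $N=4$, the interval $[4/3, 8/3)$ contains only the integer $2$, which is not coprime to $4$; for $N=6$, the interval $[2,4)$ contains $2,3$, neither coprime to $6$; for $N=10$, the interval $[10/3, 20/3)$ contains $4,5,6$, none coprime to $10$. Hence in these cases no such $a$ exists.

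For the converse — that every other $N$ admits such an $a$ — the idea is to exhibit an explicit candidate depending on the residue of $N$ modulo small numbers, or to argue by a counting/sieve estimate. The cleanest route is probably a case analysis on $N \bmod 6$: if $N$ is odd, then either $\fl{N/3}+1$ or a nearby integer of the appropriate parity/coprimality works; more robustly, note that among the roughly $N/3$ consecutive integers in $[N/3, 2N/3)$, at least one is coprime to $N$ unless $N$ has very few prime factors covering the whole interval, which forces $N$ small. Concretely, I would try $a = \fl{N/2}$ or $a = \fl{N/2}+1$ (one of these lies in $(N/3, 2N/3)$ once $N \ge 5$) and check coprimality; when both fail, $N$ is divisible by small primes in a way that pins down $N$, and one checks the remaining small cases by hand. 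Alternatively, since $\gcd(a, N) = 1$ is equivalent to $\gcd(a, N) =1$, one can invoke that the number of integers in any interval of length $> N/3$ coprime to $N$ is at least $(N/3)\prod_{p \mid N}(1 - 1/p) - 2^{\omega(N)}$ and note this is positive once $N$ is not too small, reducing to a finite check.

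The main obstacle will be making the ``$N$ large $\Rightarrow$ such $a$ exists'' direction clean rather than a messy case-chase: the Jacobsthal-type bound (the largest gap between consecutive integers coprime to $N$ is small relative to $N/3$) is what does the real work, and one needs to be careful that the interval $[N/3, 2N/3]$ genuinely has length exceeding the maximal gap for all $N$ outside a short explicitly enumerated list — then finish by inspecting that list (which will include $\{2,3,4,6,10\}$ and perhaps a few more candidates like $N=5,7,8,9,12,\dots$ that turn out to work).
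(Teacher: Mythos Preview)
Your reformulation and the forward direction are correct and match the paper exactly: $h(a)=1$ precisely when the representative of $a$ lies in $[N/3,2N/3]_\zz$, and you check $N=4,6,10$ by hand.

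For the converse, however, your concrete strategy has a real gap. You propose trying $a=\fl{N/2}$ or $a=\fl{N/2}+1$ and assert that ``when both fail, $N$ is divisible by small primes in a way that pins down $N$.'' This is not true. If $N$ is odd then $\fl{N/2}=(N-1)/2$ is automatically coprime to $N$, and if $N\equiv 0\pmod 4$ then $N/2+1$ is coprime to $N$; but if $N\equiv 2\pmod 4$, say $N=2m$ with $m$ odd, then $N/2=m$ and $N/2+1=m+1$ are \emph{both} non-coprime to $N$, and this is an infinite family, not a finite residual list. (The fix is easy: for such $N$ take $a=N/2-2=m-2$, which is odd and satisfies $\gcd(m-2,m)=\gcd(2,m)=1$, hence is coprime to $N$; it lies in $[N/3,2N/3]$ once $m\ge 7$, leaving only $N=2,6,10$.) Your alternative sieve bound $(N/3)\prod_{p\mid N}(1-1/p)-2^{\omega(N)}$ is also too weak as stated: for $N=30$ it gives $8/3-8<0$, and for $N=210$ it gives $48/3-16=0$, so it does not reduce to a finite check without further argument.

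The paper does not use either of your proposed routes. Instead it performs an explicit case analysis on the factorization type of $N$: first $N$ prime, then $3\mid N$ (writing $N=3N_0$ and taking $a=N_0+1$ or $N_0+3$), then $N$ composite with $\gcd(N,3)=1$, where it sets $p_0$ to be the smallest prime divisor of $N$, $m=N/p_0$, and looks for $a$ of the form $\fl{2p_0/3}m+b$ with $b\in\{1,2\}$, splitting further into the sub-cases $p_0^2\mid N$, $N=2m$ with $m$ odd, and $p_0\ge 5$ with $\gcd(p_0,m)=1$. Your idea of testing values near $N/2$ is arguably cleaner once patched as above, but as written the proposal does not close.
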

\begin{proof}
  For one direction, one easily checks by direct calculation that
  $[N/3, 2N/3]_\zz=\emptyset$ if $N\in \{4, 6, 10\}$. For the other
  direction, we need to show that there exists an $a\in [N/3,
  2N/3]_\zz$ if $N\not\in \{4, 6, 10\}$.  The proof will be separated
  into a few cases based on the factorization type of $N$.

Suppose $N=p$ is prime. If $N=2$, we take $a=1$; if $N=5$, we take
$a=2$; otherwise, $N\geq 7$, so there exists an integer in $[N/3, 2N/3]_\zz$.  

If $N=9$, we take $a=4$. Suppose that $N=3N_0$ with $N_0\geq 4$.  If
$N_0 \not\equiv 2 \pmod{3}$, then let $a=N_0+1$; if $N_0 \equiv 2
\pmod{3}$, let $a=N_0+3$.

Now suppose that $N$ is not a prime and $\gcd(N, 3)=1$. Let $p_0$ be the
smallest prime that divides $N$, $m:=N/p_0$ and $a_0:=\fl{2p_0/3}$.
We will choose appropriate $b>0$ such that $a:= a_0m+b$ lies in $[N/3, 2N/3]_\zz$.  We note that
\begin{equation}
  \label{eq:7}
\frac{N}{3}<\frac{2N-2m+3b}{3} \leq m\fl{\frac{2p_0}{3}}+b \leq
\frac{2N-m+3b}{3}. 
\end{equation}


 By our choice of $p_0$, every prime factor of $m$ is greater or equal
 to $p_0$. In particular, if $m$ is even, then $p_0=2$, and $4\mid
 N$. We also note that $m>3$. Indeed, if $m=2$, then $N=4$,
 contradiction to our assumption; $3\nmid m$ because that $N$ is
 assumed to be coprime to $3$.

 If $p_0^2\mid N$, choose $b=1$, then $m>3b$, so $a<2N/3$ by
 (\ref{eq:7}). Moreover $a\equiv 1 \pmod{p}$ for all $p\mid
 N$. Therefore, $a \in [N/3, 2N/3]_\zz$.

If $N=2m$ with $m$ odd, then $a_0=\fl{(2\cdot 2)/3}=1$, and $m\geq 7$
since $N\neq 10$. We choose $b=2$ so $a=m+2$ is odd, and for any prime
$p\mid m$, $a\equiv 2 \pmod{p}$.  Since $m> 3b=6$, $a<2N/3$ by
(\ref{eq:7}). If $N=p_0m$ with $\gcd(p_0, m)=1$ and $p_0\geq 5$, then
$m\geq 7$, and we choose $b$ in the two element set $\{1, 2\}$ such
that $p_0\nmid a$. For any $p\mid m$, $a \equiv b \pmod{p}$. Hence
$\gcd(a, N)=1$. Since $m\geq 7> 3b$, $a<2N/3$ by (\ref{eq:7}).
\end{proof}



A complex valued function $g$ on $\umod{N}$ is said to \textit{odd} if
$g(-a)=-g(a)$, and \textit{even} if $g(-a)=g(a)$.  So
$h_\odd:=h-(n-1)/2$ is an odd function by (\ref{eq:4}).  For $s\in
\umod{N}$, we write $\theta_s: \umod{N}\to \umod{N}$ for the
multiplication by $s$ map: $a\mapsto sa$.  Recall that a function
$g:\umod{N}\to \cc$ is said to be \textit{primitive} if $g\circ
\theta_s = g \Leftrightarrow s=1$.  We are going to show that $h$ is
primitive if $\gcd(n,N)=1$. Clearly, it is enough to show that
$h_\odd$ is primitive.


\begin{sect}\label{sec:fourier_expansion}
For each $a\in \zmod{N}$, we write
$\dbracket{a}$ for the unique representative of $a$ in $[1, N-1]_\zz$. Then 
\[h_\odd=\fl{\frac{na}{N}}-\frac{n-1}{2}=
\frac{n\dbracket{a}-\dbracket{na}}{N}-\frac{n-1}{2}.\] Let $\VZ_\odd$ denote the
space of all complex valued odd functions on $\umod{N}$. A character
$\chi: \umod{N}\to \cc^\times$ is odd if and only if $\chi(-1)=-1$.
The set of odd characters of $\umod{N}$ forms a basis of
$\VZ_\odd$. Therefore $h_\odd$ can be uniquely written as a linear
combination $\sum c_\chi \chi$ of the odd characters. Suppose that
$\gcd(n,N)=1$.  By the orthogonality of the characters,
  \[
  \begin{split}
    c_\chi&= \frac{1}{\varphi(N)}\sum_{a\in \umod{N}}
    h_\odd(a)\overline{\chi(a)} \\
    &=\frac{1}{\varphi(N)}\sum_{a\in \umod{N}}
    \frac{(n\dbracket{  a}-\dbracket{  na})\overline{\chi(a)}}{N} \quad \text{
      since } \sum_{a\in \umod{N}} \chi(a)=0\\
    &=\frac{n}{\varphi(N)}\sum_{a\in \umod{N}}
    \frac{\dbracket{  a}\overline{\chi(a)}}{N}-\frac{\chi(n)}{\varphi(N)}\sum_{a\in \umod{N}}
    \frac{\dbracket{  na}\overline{\chi(na)}}{N}\\
    &=\frac{n-\chi(n)}{\varphi(N)}\sum_{a\in \umod{N}}
    \frac{\dbracket{  a}\overline{\chi(a)}}{N}
  \end{split}
\] 
Recall that the generalized Bernoulli number 
$B_{1, \chi}$ is defined (cf. \cite[Chapter 4]{MR1421575}) to be 
\[ B_{1, \chi}=\frac{1}{N} \sum_{a\in \umod{N}} \dbracket{a}\chi(a).\]
Therefore, 
\begin{equation}
  \label{eq:fourier}
  c_\chi=(n-\chi(n))B_{1,\bar{\chi}}/\varphi(N).
\end{equation}
Since $n \geq 2$, $n-\chi(n)\neq 0$. It follows that $c_\chi=0$ if and
only if $B_{1, \bar{\chi}}=0$.
\end{sect}

\begin{prop} \label{prop:arithmetic-results-rigidity-of-h}
If $\gcd(n,N)=1$, then $h\circ \theta_s=h$ if and only if $s=1$. 
\end{prop}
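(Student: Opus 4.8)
The plan is to feed the finite Fourier expansion just computed into a statement about generalized Bernoulli numbers and then to settle that statement using the classical non‑vanishing of $L$‑values of odd Dirichlet characters. First, the reduction: since $h=h_\odd+\tfrac{n-1}{2}$ and the constant term is unaffected by precomposition with $\theta_s$, we have $h\circ\theta_s=h$ iff $h_\odd\circ\theta_s=h_\odd$. Writing $h_\odd=\sum c_\chi\chi$ as the sum over the odd characters $\chi$ of $\umod{N}$ as in Subsection~\ref{sec:fourier_expansion}, precomposition with $\theta_s$ replaces the coefficient $c_\chi$ by $c_\chi\chi(s)$; since the odd characters form a basis of the space of odd functions, $h_\odd\circ\theta_s=h_\odd$ iff $c_\chi(\chi(s)-1)=0$ for every odd $\chi$. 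By \eqref{eq:fourier}, $c_\chi\neq 0$ exactly when $B_{1,\bar\chi}\neq 0$, equivalently $B_{1,\chi}\neq 0$. Hence the Proposition is equivalent to the assertion that the odd Dirichlet characters $\chi$ modulo $N$ with $B_{1,\chi}\neq 0$ have trivial common kernel in $\umod{N}$.

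For the arithmetic input I would invoke two standard facts: \emph{(a)} an odd primitive character $\psi$ of conductor $f$ has $B_{1,\psi}=-L(0,\psi)\neq 0$ — by the functional equation together with Dirichlet's $L(1,\bar\psi)\neq 0$, or by the analytic class number formula for $\qq(\zeta_f)$ — whereas $B_{1,\psi}=0$ when $\psi$ is even; and \emph{(b)} if $\chi$ modulo $N$ is induced from the primitive character $\psi$ of conductor $f\mid N$, then
\[
B_{1,\chi}=B_{1,\psi}\prod_{p\mid N,\,p\nmid f}\bigl(1-\psi(p)\bigr).
\]
Combining these, $B_{1,\chi}\neq 0$ iff $\chi$ is odd and $\psi(p)\neq 1$ for every prime $p\mid N$ with $p\nmid f$; in particular, every odd character of $\umod{N}$ whose conductor is divisible by every prime dividing $N$ (for instance every primitive odd character) has $B_{1,\chi}\neq 0$.

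Now the separation statement. Fix $s\neq 1$; I must exhibit such a $\chi$ with $\chi(s)\neq 1$. Using the CRT factorization of $\umod{N}$ over the prime powers dividing $N$, choose a prime $p_0$ with $s\not\equiv 1\pmod{p_0^{e_0}}$, where $e_0=v_{p_0}(N)$; then $p_0^{e_0}\geq 3$, and since $\umod{p_0^{e_0}}$ is cyclic when $p_0$ is odd and of the form $\langle-1\rangle\times\langle 5\rangle$ with $5$ of $2$-power order when $p_0=2$, its odd characters separate its points, so there is an odd character $\psi_0$ of $\umod{p_0^{e_0}}$ with $\psi_0(s)\neq 1$. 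If $N=p_0^{e_0}$ we are done by (a). In general I would extend $\psi_0$ to a primitive character $\psi$ of conductor $d\mid N$ by multiplying in an auxiliary primitive character on each other prime‑power part $\umod{q^{v_q(N)}}$: taking these even preserves oddness of $\psi$, taking them nontrivial forces $q$ into the conductor $d$, and taking them trivial on the components of $s$ that equal $1$ keeps $\psi(s)=\psi_0(s)\neq 1$. When every such $q^{v_q(N)}\geq 5$ this can be arranged so that $\mathrm{rad}(d)=\mathrm{rad}(N)$, and then $\chi:=\psi$, read modulo $N$, is odd, has $\chi(s)\neq 1$, and satisfies $B_{1,\chi}\neq 0$ by (b).

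The main obstacle is the case in which some prime power $q^{v_q(N)}$ equals $2,3$ or $4$: then $\umod{q^{v_q(N)}}$ carries no nontrivial even character, $q$ cannot be absorbed into the conductor, and the product in (b) cannot be emptied, so one must instead secure $\psi(q)\neq 1$ by hand. I expect to handle this by induction on $N$: choose $\psi_0$ on the $p_0$-part so as to satisfy simultaneously $\psi_0(s)\neq 1$ and $\psi_0(q)\neq 1$ for the finitely many exceptional primes $q$ — this is feasible because in a cyclic group the odd characters that kill a fixed nontrivial element form a subset of density at most $\tfrac12$, so a crude union bound shows that finitely many such conditions cannot exhaust all odd characters — and then take $\chi=\psi_0$ with all other components trivial; by (b) it lies among the characters with $B_{1,\chi}\neq 0$, while $\chi(s)=\psi_0(s)\neq 1$. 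Carefully tracking parities and the exceptional moduli $2,3,4$ in this last step is the only genuinely delicate point of the proof.
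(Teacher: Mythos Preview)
Your reduction to the character problem and the formula $B_{1,\chi}=B_{1,\psi}\prod_{p\mid N,\,p\nmid f}(1-\psi(p))$ match the paper exactly. The difficulty is entirely in the constructive step, and there the argument has a real gap.

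The fallback union bound does not close. With the exceptional primes $q\in\{2,3\}$ and the element $s$ itself you may have up to three conditions $\psi_0(t)\neq 1$ to impose on the odd characters of $\umod{p_0^{e_0}}$, and three sets each of density $\leq\tfrac12$ can certainly cover the whole. (Worse, your sentence ``take $\chi=\psi_0$ with all other components trivial'' would force $\psi_0(q)\neq 1$ for \emph{every} prime $q\mid N$ with $q\neq p_0$, not just the exceptional ones, since none of those primes would then lie in the conductor.) One can sharpen the bound---the density of odd $\psi_0$ with $\psi_0(t)=1$ is $0$ when $\mathrm{ord}(t)$ is even and $1/\mathrm{ord}(t)\leq\tfrac13$ otherwise, and $\langle 2\rangle$ and $\langle 3\rangle$ cannot both be the unique order-$3$ subgroup of a cyclic $\umod{p_0^{e_0}}$---but making this terminate is precisely the case analysis you have deferred, and it is not obviously simpler than what you are trying to avoid. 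There is also a slip in the main construction: it is the components with $s_q\neq 1$ (not $s_q=1$) on which a nontrivial $\psi_q$ can spoil $\psi(s)\neq 1$; taking $\psi_q$ trivial there removes $q$ from the conductor and adds another Euler factor to control.

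The paper sidesteps the construction by counting. It first observes from $h_\odd(1)<0<h_\odd(-1)$ that $-1\notin\langle s\rangle$, so the odd characters with $\chi(s)=1$ form a set $T_1(s,N)$ of size at most $|S(N)|/2$. It then proves, following Koblitz--Rohrlich, the uniform bound $|S_0(N)|<|S(N)|/2$ on the odd $\chi$ with $B_{1,\chi}=0$, by estimating $\sum_{p\mid N}1/(\varphi(p^{e_p})\,\mathrm{ord}(p,N/p^{e_p}))$ and checking $N\leq 42$ directly. Since $|S_0(N)|+|T_1(s,N)|<|S(N)|$, some odd $\chi$ lies in neither set, and that $\chi$ separates $s$. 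Your approach could in principle be completed, but the missing estimate is of the same nature and comparable difficulty to the lemma the paper proves.
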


\begin{proof} We may and will assume that $N> 2$ since $\umod{2}$ is
  trivial.  Clearly, the proposition is true if and only if it is true
  for $h_\odd$. Moreover, if $h_\odd \circ
  \theta_s=h_\odd$, then $h_\odd\circ \theta_{s^i}= h_\odd$ for all
  $i\in\zz$. Note that 
  \[ h_\odd(1)= \fl{\frac{n}{N}}-\frac{n-1}{2}\leq
  \frac{n-1}{N}-\frac{n-1}{2}<0.\] So $h_\odd(-1)=
  -h_\odd(1)>0$. Therefore $-1$ is not in $\dangle{s}$, the cyclic
  subgroup of $\umod{N}$ generated by $s$.

  Recall that $h_\odd= \sum_{\chi(-1)=-1} c_\chi \chi$.  So 
\[ h_\odd\circ \theta_s = \sum_{\chi(-1)=-1} c_\chi \chi(s) \chi.\]
By the linear independence of characters, we see that $h_\odd\circ
\theta_s =h_\odd$ if and only if 
\begin{equation}
  \label{eq:comparing_fourier_coefficient}
 c_\chi = c_\chi \chi(s)
\end{equation}
for all odd characters $\chi$.
Following \cite[Proposition, p.~1190]{MR511556}, we let $S(N)$ be the
set of odd characters of $\umod{N}$, and 
\[ S_0(N)= \{\chi \in S(N) \mid B_{1, \chi}=0\} \subset S(N). \] Since $B_{1,
  \bar{\chi}}= \bar{B}_{1, \chi}$, $S_0(N)$ coincides with the
set of odd characters $\chi$ with $c_\chi=0$ by
Subsection~\ref{sec:fourier_expansion}.  Further, let $T_1(s,N)$ be
 set 
\[\{ \chi\in S(N)\mid  \chi(s)=1\}  \subset S(N).\]

Assume that $h_\odd\circ \theta_s=h_\odd$.  It follows from
(\ref{eq:comparing_fourier_coefficient}) that 
\begin{equation}
  \label{eq:union_set_equal}
  S_0(N)\cup T_1(s,N)= S(N).
\end{equation}
Our goal is to show that when $s\neq 1$, the size of both $S_0(N)$ and
$T_1(s,N)$ are small comparing to that of $S(N)$, and thus obtain
a contradiction. 

Let $\ord(s, N)$ denote the order of $s$ in $\umod{N}$. Since $-1$ is
not in the cyclic group generated by $s$, 
\[\abs{T_1(s,N)}= \frac{\varphi(N)}{2\ord(s,N)}=\frac{\abs{S(N)}}{\ord(s,N)}.\]
In particular, if $s\neq 1$, then 
\[ \abs{T_1(s,N)} \leq \abs{S(N)}/2.\]
On the other hand, if $\gcd(N, 6)=1$, then 
\[ \abs{S_0(N)}< {\abs{S(N)}}/6\]
by \cite[Proposition, p.~1190]{MR511556}.  This clearly contradicts
(\ref{eq:union_set_equal}). For a general $N$, the proposition follows
if we can prove that $\abs{S_0(N)}<\abs{S(N)}/2$. 
\end{proof}

\begin{lem}
  For any integer $N\geq 3$, $\abs{S_0(N)}<\abs{S(N)}/2$.
\end{lem}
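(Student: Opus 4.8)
The plan is to reduce the bound to a counting estimate for imprimitive characters, dispose of the part of $N$ coprime to $6$ with the Koblitz--Rohrlich inequality already invoked above, and clear up the small primes $2,3$ by a finite check.

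\textbf{Reduction.} First I would recall the classical non-vanishing $B_{1,\chi^{*}}\neq 0$ for every \emph{primitive} odd character $\chi^{*}$ (equivalently $L(0,\chi^{*})=-B_{1,\chi^{*}}\neq 0$, from the functional equation of $L(s,\chi^{*})$ and $L(1,\overline{\chi^{*}})\neq 0$). Hence each $\chi\in S_{0}(N)$ is imprimitive; writing it as the character of $\umod{N}$ induced by its primitive part $\chi^{*}$ of conductor $f\mid N$ with $f<N$, the standard factorization
\[
B_{1,\chi}=B_{1,\chi^{*}}\prod_{p\mid N,\ p\nmid f}\bigl(1-\chi^{*}(p)\bigr)
\]
shows that $\chi\in S_{0}(N)$ if and only if $\chi^{*}(p)=1$ for some prime $p\mid N$ with $p\nmid f$. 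In particular $fp\mid N$, so every such $\chi$ has conductor $\le N/2$, and $\chi^{*}$ factors through $\umod{N_{p}'}/\dangle{p}$ with $N_{p}':=N/p^{v_{p}(N)}$ (note $f\mid N_{p}'$). Since $\chi\mapsto\chi^{*}$ is injective, choosing such a $p$ for each $\chi$ embeds $S_{0}(N)$ into the disjoint union, over $p\mid N$, of the sets of odd characters of $\umod{N_{p}'}$ killing $p$. Such an odd character exists only if $-1\notin\dangle{p}$ in $\umod{N_{p}'}$, and then there are exactly $\varphi(N_{p}')/\bigl(2\ord_{N_{p}'}(p)\bigr)$ of them; calling this number $U_{p}$ (and $U_{p}=0$ otherwise) and using $|S(N)|=\tfrac12\varphi(N)=\tfrac12\varphi(p^{v_{p}(N)})\varphi(N_{p}')$, the lemma becomes
\[
\sum_{p\mid N,\ U_{p}\neq 0}\frac{1}{\varphi(p^{v_{p}(N)})\,\ord_{N_{p}'}(p)}\ <\ \frac12 .
\]

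\textbf{Per-term bounds and the prime-to-$6$ part.} For $p\ge 5$ the summand is $\le 1/4$ since $\varphi(p^{v_{p}(N)})\ge 4$. For $p=3$ with $U_{3}\neq 0$, $N_{3}'\ge 3$ forces $\ord_{N_{3}'}(3)\ge 2$, hence a summand $\le 1/4$. For $p=2$ with $U_{2}\neq 0$, $N_{2}'$ is odd and $\ge 3$; since $\ord_{N_{2}'}(2)=2$ would force $N_{2}'=3$ and then $-1=2\in\dangle{2}$ (so $U_{2}=0$), one has $\ord_{N_{2}'}(2)\ge 3$ and a summand $\le 1/3$. When $\gcd(N,6)=1$ the stronger bound $|S_{0}(N)|<|S(N)|/6$ is exactly \cite[Proposition, p.~1190]{MR511556}, so it remains to treat $N$ divisible by $2$ or $3$. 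Writing $N=2^{a}3^{b}M$ with $\gcd(M,6)=1$, the contribution of the primes $p\mid M$ is controlled by Koblitz--Rohrlich applied to the $M$-part and that of $p\in\{2,3\}$ by the estimates above, with the case $M=1$ done by hand: a primitive odd character mod $3^{c}$ has $\chi^{*}(2)$ a nontrivial root of unity, and one mod $2^{c}$ has $\chi^{*}(3)=1$ only for $c=3$, so $S_{0}(2^{a}3^{b})=\varnothing$ unless $24\mid N$, in which case it has a single element and the bound is clear.

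\textbf{Main obstacle.} The hard part is the final assembly. The crude per-term bounds do \emph{not} by themselves give $\sum_{p}<1/2$, because one summand can be as large as $1/3$ and $N$ may have several prime factors; what rescues the estimate is that for most primes $\dangle{p}$ already contains $-1$ (whence $U_{p}=0$) or $\ord_{N_{p}'}(p)$ is large. Carrying this through requires tracking which tensor factor of $\umod{N}=\umod{2^{a}}\times\umod{3^{b}}\times\umod{M}$ carries the odd component of $\chi$ (``odd'' does not distribute over the product), using the parity constraint to halve the count in that factor so that the $<1/6$ saving of Koblitz--Rohrlich for the $M$-part survives the $\le 1/3$ cost that the prime $2$ can contribute, together with a direct computation for the finitely many ``small'' $N$ where a single summand reaches $1/4$ or $1/3$ — essentially $N\in\{10,14,20,24,30\}$, and each of these is checked to have only one nonzero summand. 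Once these are disposed of, the displayed sum stays below $1/2$ and the lemma follows.
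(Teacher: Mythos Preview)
Your reduction is exactly the paper's: show $B_{1,\chi^*}\neq 0$ for primitive odd $\chi^*$, use the factorization $B_{1,\chi}=B_{1,\chi^*}\prod_{p\mid N,\,p\nmid f}(1-\chi^*(p))$ to see that $\chi\in S_0(N)$ forces $\chi^*(p)=1$ for some $p\mid N$ with $p\nmid f$, and bound $|S_0(N)|/|S(N)|$ by the sum $\sum_{p\mid N}1/\bigl(\varphi(p^{v_p(N)})\,\ord_{N_p'}(p)\bigr)$. Your per-prime ceilings $\le 1/3$, $1/4$, $1/4$ for $p=2$, $3$, $\ge 5$ are also correct, and your treatment of the case $M=1$ (i.e.\ $N=2^a3^b$) is fine.

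Where your proposal and the paper diverge is the endgame when $6\mid N$ and $M>1$. The paper does \emph{not} track parity through a CRT decomposition of $\umod{N}$; instead it feeds in one extra elementary input you are missing, namely the lower bound
\[
\ord_{m}(p)\ \ge\ \lfloor\log_p m\rfloor+1
\]
(coming from $p^{\ord_m(p)}\ge m+1$), and then runs a three-way case split. If $N$ has at least two prime factors $\ge 5$ then $N_2',N_3'\ge 35$, so the $\log$-bound sharpens the $p=2$ summand to $\le 1/6$ and the $p=3$ summand to $\le 1/8$, after which Koblitz--Rohrlich handles the rest and the total is $<11/24$. If $N>42$ has at most one prime $\ge 5$, the same $\log$-bound is pushed at each of the (at most three) primes to get under $1/2$. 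Finally every non-prime-power $N\le 42$ is checked individually.

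Your ``Main obstacle'' paragraph correctly diagnoses that the crude $1/3+1/4+1/6$ overshoots $1/2$, but what follows is not yet a proof. The parity-tracking idea is asserted rather than executed, and the finite check is both incomplete (the paper must also handle $N=12,15,18,21,22,26,33,34,36,38,39,40,42$) and contains a false claim: for $N=24$ one has $v(2,24)=1/8$ and $v(3,24)=1/4$, so there are \emph{two} nonzero summands, not one. The genuine missing idea is the logarithmic lower bound on $\ord_{N_p'}(p)$; once you use it, the small primes $2$ and $3$ cost much less than your crude $1/3$ and $1/4$ whenever $N$ is not tiny, and the argument closes.
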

\begin{proof}
  The proof is modeled after that of \cite[Proposition,
  p.~1190]{MR511556}. First, let $\chi$ be an odd primitive character
  with conductor $N$, then (cf. \cite[Chapter 4]{MR1421575})
  \[ L(1,\chi)= \frac{\pi i\tau(\chi)}{N} B_{1,\bar{\chi}},\] where
  $\tau(\chi)=\sum_{a=0}^{N-1}\chi(a)e^{2\pi i a/N}$ is the Gauss
  sum. It is known classically that $\abs{\tau(\chi)}=\sqrt{N}$, and
  $L(1,\chi)\neq 0$. Therefore. $B_{1,\chi}\neq 0$.

  More generally, let $\chi$ be a character modulo $N$ with conductor
  $N_0$, and $\chi_0$ be the character modulo $N_0$ that induces
  $\chi$. Then 
  \[ B_{1, \chi}=B_{1, \chi_0} \prod (1-\chi_0(p)),\] where the
  product is over all prime factors $p$ of $N$ that do not divide
  $N_0$. So $B_{1, \chi}=0$ if and only if there exists a prime factor
  $p$ of $N$ such that $p\nmid N_0$ and $\chi_0(p)=1$. In particular,
  if $N$ is a prime power, then $B_{1,\chi}\neq 0$ for all odd
  characters $\chi$.  Suppose that $N=\prod_{i=1}^m p_i^{e_i}$. For a
  fixed prime divisor $p_i$, the number $u(p_i,N)$ of all odd characters
  $\chi_0$ modulo $N_i:=N/p_i^{e_i}$ with $\chi_0(p_i)=1$ is
\[ u(p_i,N)=
\begin{cases}
  0  \qquad & \text{if } p_i^c \equiv -1 \pmod{N_i} \text{ for some } c
  \in \nn.  \\
  \varphi(N_i)/(2\ord(p_i, N_i)), \qquad & \text{otherwise. }
\end{cases}
\] 

Let $v(p_i,N) = 2u(p_i,N)/\varphi(N)$.  Then 
\[s(N):=\frac{\abs{S_0(N)}}{\abs{S(N)}}\leq \sum_{i=1}^m v(p_i,N) \leq
\sum_{i=1}^m \frac{1}{\varphi(p_i^{e_i})\ord(p_i, N_i)}. \]
We write $w(N)$ for the last sum. Note that it makes sense to talk
about $u, v,w$ only if $N$ has at least two distinct prime factors. 

Here is a  list of some simple facts about $w(N)$. \\
\begin{itemize}
\item  By \cite[Proposition, p.~1190]{MR511556}, $w(N)<1/6$ if $\gcd(N,
6)=1$.\\
\item  $w(M)\geq w(N)$ if $M\mid N$ and $p\mid N \Leftrightarrow p\mid M$.\\
\item  Suppose that $M$ has at least two factors, and $M\mid N$, then  
\[ w(N)\leq w(M)+\sum_{p\mid N, p \nmid M} v(p, N).\]
\end{itemize}

We separate the estimate of $w(N)$ into cases according to the factor
type of $N$.  \\

Case 1. Assume that $N$ has at least two distinct prime
divisors that's greater or equal to $5$. Since $\ord(p, N_i)\leq
\fl{\log_p N_i}+1$, we have
\[  w(N)< \frac{1}{\varphi(2)(\fl{\log_2 (5\cdot
    7)}+1)}+\frac{1}{\varphi(3)(\fl{\log_3(5\cdot 7)}+1)}+ \frac{1}{6}=\frac{11}{24}<\frac{1}{2}\]

Case 2. Assume that $N>42$, and $N=2^{e_1}3^{e_2}p^{e_3}$ with $e_1, e_2, e_3\geq 0$ and
$p\geq 5$. 
\begin{align*}
\text{ If } e_1\geq 1,&\qquad   \varphi(2^{e_1})\ord(2, N/2^{e_1})\geq \max_{e_1\geq
    1}\big\{\varphi(2^{e_1})(\fl{\log_2(N/2^{e_1})}+1)\big\}\geq 5.\\
\text{ If } e_2\geq 1, &\qquad \varphi(3^{e_2})\ord(3, N/3^{e_2})\geq \max_{e_2\geq
  1}\big\{\varphi(3^{e_2})(\fl{\log_3(N/3^{e_2})}+1)\big\}\geq 6.
\end{align*}
Moreover, if $e_3\geq 1$, then 
$\varphi(p^{e_3})\ord(p, N/p^{e_3})\geq 8$ since either $\varphi(p^{e_3})\geq 8$,
or $e_3=1$ and $p=5$ or $7$, and $\ord(p, N/p^{e_3})\geq 2$. 
Overall, we see that 
\[ w(N)\leq \frac{1}{5}+\frac{1}{6}+\frac{1}{8}=\frac{59}{120}<\frac{1}{2}.\]
  
Case 3. Assume that $N\leq 42$. 

If $N=6$, then $\umod{6}\cong \zmod{2}$. There is a unique odd
character $\chi$ modulo 6, and $B_{1,\chi}= (1-5)/6\neq 0$.

If $N=2\ell^{e_2}$ for some odd prime $\ell$ with $\ell^{e_2}\geq 5$,
then $v(\ell, N)=0$ since $\ell \equiv -1 \pmod{2}$. So
\[ s(N)\leq v(2, N)\leq  \frac{1}{\varphi(2)(\fl{\log_2 \ell^{e_2}}+1)}
\leq \frac{1}{3}.\]

If $N=4\cdot \ell^{e_2}$ with $e_2\geq 1$ and $\ell=3,7$, then $v(\ell,
N)=0$ since $\ell\equiv -1 \pmod{4}$. So
\[ s(N)\leq v(2, N)\leq
\frac{1}{(\varphi(4)(\fl{\log_2\ell^{e_2}}+1))}\leq \frac{1}{4}. \]

If $N=2^{e_1}\cdot 5$, then $v(2,N)=0$ since $4\equiv -1 \pmod{5}$. So 
\[ s(N)\leq v(5, N) \leq \frac{1}{\varphi(5)} =\frac{1}{4}.\]

If $N=24$, then $v(2, 24)=1/(2\varphi(8))=1/8$, and $v(3,
24)=1/(2\varphi(3))=1/4$. So $s(24)\leq w(24)=1/8+1/4<1/2$. 

If $N=3p$ with $p\geq 5$, then $\ord(3,p)\geq 3$ since $p\nmid
(3^2-1)$. So 
\[ w(N)\leq \frac{1}{3\varphi(3)}+\frac{1}{\varphi(p)}\leq
\frac{1}{4}+\frac{1}{6}<\frac{1}{2}.\]

If $N=30$, then $v(2, 30)=1/4$, and $v(3,30)=0$ since $3^2\equiv -1
\pmod{10}$, and $v(5,30)=0$ since $5\equiv -1 \pmod{6}$. Therefore, 
$s(30)\leq 1/4$. 

If $N=42$, then $\ord(2,21)=6$, $v(3, 42)=0$ since $3^3\equiv -1
\pmod{14}$, and $v(7, 42)=1/6$. So $s(42)\leq 1/6+1/6 = 1/3$. 

This complete the enumeration of all the positive numbers $N\leq 42$
that are not prime powers.
\end{proof}

For the rest of the section, we focus on the case where $N=q=p^r$ is a
prime power and $n=3$. Then $h(a)=\fl{3a/q}$.  Recall that
\begin{gather*}
    \TZ_q:=\{g: \umod{q}\to \{0, 1\}\mid g(a)+g(-a)=1,  g([1, q/3]_\zz)=0 \};\\
 \SZ_q:=\{s\in \umod{q} \mid s\neq 1, \text{and } \exists g\in
  \TZ_q \text{ such that } g\circ \theta_s\in \TZ_q\}.     
\end{gather*}
Let $g_i: \umod{q}\to \{0, 1\}$ be functions satisfying
$g_i(a)+g_i(-a)=1$ for $i=1,2$. If $h=g_1+g_2$, then $g_i\in \TZ_q$
for all $i$.

\begin{sect}\label{subsec:elementary-elimination-for-s}
We note that if $p$ is odd, then $2\in \SZ_q$. Indeed, let $g\in
\TZ_q$ be the function given by 
\begin{equation}
  \label{eq:38}
 g(a)=
 \begin{cases}
   0    &\qquad     \text{ if } \quad a \in [1, q/3]_\zz;\\
   0    &\qquad     \text{ if } \quad a \in [q/3, 2q/3]_\zz \text{ and } a
   \text{ is even};\\ 
   1   &\qquad     \text{ if } \quad a \in [q/3, 2q/3]_\zz \text{ and } a
   \text{ is odd};\\ 
   1    &\qquad     \text{ if } \quad a \in [2q/3, q]_\zz,\\
 \end{cases}
\end{equation}
then $g\circ \theta_2\in \TZ_q$. On the other hand, if $g'\in \TZ_q$
is any function such that $g'\circ \theta_2\in \TZ_q$, then $g'(a)=0$
for all even $a \in [q/3, 2q/3]_\zz$. Since $q$ is $odd$, it follows
from the assumption $g'(a)+g'(-a)=1$ that $g'(a)=1$ for all odd $a\in
[q/3, 2q/3]_\zz$. So $g'=g$. In other words, the function with both
$g$ and $g\circ \theta_2$ in $\TZ_q$ is uniquely determined. Since
$2^{-1}=(q+1)/2\in \umod{q}$, $(q+1)/2\in \SZ_q$ as well.  On the
other hand, $-1\not\in \SZ_q$ since $(g\circ\theta_{-1})(1)=g(q-1)=1$.
\end{sect}



\begin{lem}\label{lem:possible_choices_of_s}
Suppose that $q=p^r$ with $p\geq 5$, then $\SZ_q=\{2, (q+1)/2\}$. 
\end{lem}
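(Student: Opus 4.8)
The plan is to replace the definition of $\SZ_q$ by an elementary condition on the set $A:=[1,q/3]_\zz$ and then, for every unit $s\notin\{1,2,(q+1)/2\}$, to exhibit an explicit $a\in A$ witnessing $s\notin\SZ_q$. The inclusions $\{2,(q+1)/2\}\subseteq\SZ_q$ and $1,-1\notin\SZ_q$ are already in Subsection~\ref{subsec:elementary-elimination-for-s}, so only $\SZ_q\subseteq\{2,(q+1)/2\}$ remains.

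The first step is the reformulation: for a unit $s\neq1$, $s\in\SZ_q$ if and only if $sA\cap(-A)=\emptyset$. If $g$ and $g\circ\theta_s$ both lie in $\TZ_q$, then $g$ vanishes on $A$ and on $sA$ and equals $1$ on $-A$ and on $-sA$; as $g$ is $\{0,1\}$-valued this forces $(A\cup sA)\cap(-A\cup-sA)=\emptyset$, and since $A\cap(-A)=\emptyset$ and $sA\cap(-sA)=\emptyset$ automatically, the surviving condition is $sA\cap(-A)=\emptyset$. Conversely, if $sA\cap(-A)=\emptyset$ then $A\cup sA$ is disjoint from its negative, so setting $g=0$ on $A\cup sA$, $g=1$ on $-(A\cup sA)$, and then sending one element of each remaining pair $\{x,-x\}$ in $\umod{q}$ to $0$ and the other to $1$, we obtain $g\in\TZ_q$ with $g\circ\theta_s\in\TZ_q$. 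Since $sa\bmod q$ is automatically prime to $q$, one has $sa\bmod q\in-A$ iff $sa\bmod q>2q/3$, so $sA\cap(-A)=\emptyset$ is equivalent to $sa\bmod q\le2q/3$ for every $a\in A$. Hence it suffices to find, for each unit $s\notin\{1,2,(q+1)/2\}$, an $a\in A$ with $sa\bmod q\in(2q/3,q)$; I would also record the companion identity $sA\cap(-A)=\emptyset\iff(q-s)A\cap A=\emptyset$.

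The choice of $a$ I would organize by the size of the representative $s\in[1,q-1]$. If $s>2q/3$, take $a=1$. If $q/3<s<q/2$, take $a=2$, so $sa=2s\in(2q/3,q)$ (valid since $2\in A$ for $q\ge7$; for $q=5$ the only such $s$ is the excluded value $2$). If $2<s\le q/3$, let $a_0$ be the least positive integer with $sa_0>2q/3$; minimality gives $sa_0\le2q/3+s\le q$, so $sa_0\in(2q/3,q)$, and $a_0=\fl{2q/(3s)}+1\le\fl{q/3}$ because $s\ge3$; if $p\mid a_0$ then necessarily $s<q/6$, and then $a_0+1\le\fl{q/3}$ (using that $\fl{q/3}$ is prime to $p$ when $p\ge5$) and $s(a_0+1)=sa_0+s<q$ still lies in $(2q/3,q)$. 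Finally, if $q/2<s<2q/3$ and $s\neq(q+1)/2$, put $s''=q-s\in(q/3,q/2)$; by the companion identity we must show $s''A\cap A\neq\emptyset$, i.e.\ find $a\in A$ with $s''a\bmod q\le\fl{q/3}$. Writing $d=q-2s''$ (an odd integer with $3\le d\le\fl{q/3}$, since $s''\neq(q-1)/2$), one runs the walk $s'',\,s''-d,\,s''-2d,\dots$; each step being $\le\fl{q/3}$, it lands in $[1,\fl{q/3}]$ after $k^*$ steps, giving the candidate $a=2k^*+1$, and a short computation shows $2k^*+1\le\fl{q/3}$ precisely because $d\ge3$ — this is the inequality that singles out $s''=(q-1)/2$, i.e.\ $s=(q+1)/2$. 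When this odd index is divisible by $p$ one moves within the resulting run of consecutive odd solutions $2k^*+1,2k^*+3,\dots$ (at most one of which can be divisible by $p\ge5$), and in the isolated case $k^*=1$ one simply uses $a=3$. The prime powers $q=5,7$ are checked by hand; there almost all of the above ranges are empty.

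The step I expect to be the main obstacle is the last case $q/2<s<2q/3$: one must verify that the descent on $s''$ really terminates within the budget $a\le\fl{q/3}$ for every $s''\neq(q-1)/2$, and simultaneously keep the coprimality bookkeeping straight — here the facts that $\fl{q/3}$ is prime to $p$ and that among fewer than $p$ consecutive terms of an arithmetic progression with common difference prime to $p$ at most one is divisible by $p$ are what make the admissibility of the chosen index work out. Case $2<s\le q/3$ needs the same nudging, but there the estimates are loose and routine.
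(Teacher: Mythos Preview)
Your reformulation $s\in\SZ_q\iff sA\cap(-A)=\emptyset$ (for $s\neq 1$, with $A=[1,q/3]_\zz$) is correct and cleanly removes the auxiliary function $g$ from the picture; the paper never isolates this equivalence and instead works directly with a putative $g$ throughout. The organization is also genuinely different. The paper first eliminates the subintervals $[3,q/6]$, $(q/6,2q/9)$, $(2q/9,q/3)$, $(q/3,q/2)$ by ad hoc witnesses, and then, for $s\in(q/2,2q/3)$, runs an inductive refinement of upper bounds $c_t=\frac{3t-1}{3(2t-1)}$, showing $s\in[q/2,c_tq]\Rightarrow s\in[q/2,c_{t+1}q]$ (or $c_{t+2}q$, to skip multiples of $p$) until the interval shrinks to $\{(q+1)/2\}$. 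Your treatment of that last range --- passing to $s''=q-s$ and walking the arithmetic progression $s'',\,s''-d,\,s''-2d,\ldots$ with $d=q-2s''$ --- is more direct and makes the special role of $s=(q+1)/2$ (i.e.\ $d=1$) explicit.

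The one genuine loose end is exactly where you flagged it. You assert that if $p\mid(2k^*+1)$ one can move to $2k^*+3$ within the ``run'', but you do not verify that $k^*+1$ is still in the run, i.e.\ that $s''-(k^*+1)d\geq 1$. This does hold, but it requires an argument: if the run were a singleton then $s''-(k^*-1)d>\lfloor q/3\rfloor$ and $s''-(k^*+1)d\leq 0$ force $2d>\lfloor q/3\rfloor$, and combining with $k^*d<s''<q/2$ gives $k^*\leq 2$; since $p\mid(2k^*+1)$ and $p\geq 5$ this pins down $k^*=2$ and $p=5$, whereupon the simultaneous constraints $d\geq q/7$ (from $s''\leq 3d$) and $d<(q+4)/9$ (from $s''-d>\lfloor q/3\rfloor$) are incompatible for $q\geq 25$. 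You should also record that $2k^*+3\leq\lfloor q/3\rfloor$ whenever $p\mid(2k^*+1)$, which follows because $p\nmid\lfloor q/3\rfloor$ and either $p\nmid(\lfloor q/3\rfloor-1)$ or $\lfloor q/3\rfloor-1$ is even. With these details supplied, your proof is complete and somewhat shorter than the paper's.
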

\begin{proof}
  Note that the lemma is trivial for $q=5$ since $(q+1)/2=3$ in this
  case, so the only other nontrivial element in $(\zz/5\zz)^\times $
  that's not in our list is $s=4 \equiv -1 \pmod{5}$, which is not in
  $\SZ_q$ as remarked in
  subsection~\ref{subsec:elementary-elimination-for-s}.  Henceforth we
  assume that $q\geq 7$. In particular, $2\in [0, q/3]_\zz$. Suppose
  that $g\in \TZ_q$ is a function such that $g\circ \theta_s\in
  \TZ_q$. We narrow down on the possible $s$ in steps.
  \begin{step}
$s\in [1, 2q/3]_\zz$.  Otherwise, $(g\circ \theta_s)(1)=g(s)=1$. 
  \end{step}
  \begin{step}
    We show that the lemma is true for $7\leq q\leq 23$. This will
    also give us a glimpse of the idea of the proof for larger
    $q$. Note that $q$ is a prime in this case.  First suppose that
    $3\leq s <q/3$, then $2<\fl{q/s}<q/3$, and
    \[ q> s\fl{\frac{q}{s}} \geq q-s+1> \frac{2q}{3}+1.\] So it
    follows that $g(s\cdot \fl{q/s})=1$. Contradiction.

    Hence we must have $s\in [q/3, 2q/3]_\zz$. If $q/3< s< q/2$, then
    $ 2q/3< 2s <q$, and $(g\circ\theta_s)(2)=g(2s)=1$. Contradiction
    again!

Therefore $s\in [q/2, 2q/3]_\zz$. Note that the lemma is already
proved for $q=7$ since the only element in the set $[7/2, 14/3]_\zz$
is $4=(7+1)/2$. We further assume that $q\geq 11$. In particular $3\in
[1, q/3]_\zz$ and thus $g(3)=0$. Note that $3q/2 < 3s < 2q$. In order
that $g(3s)=0$ we must have $3s-q< 2q/3$, or equivalently $s <
5q/9$. Recall that $q\leq 23$. So $(q+3)/2 > 5q/9$, and the only
element in $[q/2, 5q/9]_\zz$ is $(q+1)/2$. The lemma is proved for
all $7\leq q \leq 23$. 
  \end{step}
  We assume that $q\geq 25$ for the rest of the proof. In particular,
  both $3$ and $4$ are in $[1, q/3]_\zz$.
  \begin{step}
    In this step we will show that if $s\geq 3$, then $s\in [q/2,
    2q/3]_\zz$. One difference with the previous step is that $q$ is
    not necessarily prime, so we have to avoid using any $1\leq a<
    q/3$ that are divisible by $p$ in our proof. 

    First, we claim that $s\not\in [3, q/6]_\zz$. Suppose otherwise,
    then
\begin{gather*}
  0<\fl{\frac{q}{s}}-1<\fl{\frac{q}{s}}\leq \fl{\frac{q}{3}},\\
  q> s\fl{\frac{q}{s}}> s
  \left(\fl{\frac{q}{s}}-1\right)> q-2s >
  \frac{2q}{3}.
\end{gather*}
If $p\nmid \fl{q/s}$, we set $a=\fl{q/s}$, otherwise, we set
$a=\fl{q/s}-1$. Then $a\in [2, q/3]_\zz$, and $a\cdot s\in [2q/3, q]_\zz$,
so
\[ (g\circ \theta_s)(a)=g(a\cdot s)= 1.\] This
contradicts the assumption of the lemma.

Second, we claim that $s\not\in [q/3,\; q/2]_\zz$. Suppose this
is not true, then $ 2q/3<2s<q$, and
$(g\circ\theta_s)(2)=g(2s)=1$. Contradiction.

Third, if $2q/9<s<q/3$, then $2q/3<3s<q$, and it follows that
\[ (g\circ\theta_s)(3)=g(3s)=1.\] Again, this leads to a
contradiction.

Last, if $q/6<s<2q/9$, then $2q/3<4s<8q/9<q$. Once again the
contradiction arises since $(g\circ\theta_s)(4)=g(4s)=1$. So we must
have
\begin{equation}
  \label{eq:2}
  q/2 < s < 2q/3.
\end{equation}
  \end{step}
  \begin{step}
    Assume that $s\neq 2$. Then we must show that $s=(q+1)/2$. By
     (\ref{eq:2}), $3q/2< 3s < 2q$.  Since $g(3s)=g(3s-q)=0$,
    we must have $3s-q<2q/3$, i.e.,
\begin{equation}
  \label{eq:3}
 q/2< s < 5q/9. 
\end{equation}
Note that $5/9< 2/3$, so the upper bound for $s$ has been lowered.
Now the idea is to repeat this process by taking the products of $s$
with odd numbers $a=5, 7, 9$, etc., and inductively lower the upper
bound until there is no other element left in the interval except
$s=(q+1)/2$.  But once again extra care must be taken since we need to
make sure that the odd numbers that are divisible by $p$ be skipped.

For integers $t\geq 1$, let
\[c_t= \frac{3t-1}{3(2t-1)}=\frac{1}{2}+\frac{1}{6(2t-1)}.\] Note that
$c_1=2/3,\; c_2=5/9$, and $c_t>c_{t+1}$. Suppose that for a given
$t\geq 2$ we have \[ q/2< s < c_t q.\] Then 
\begin{gather*}
   tq < (2t+1)s <
\frac{(2t+1)(3t-1)q}{3(2t-1)}\leq (t+1)q;\\
   (t+1)q < (2t+3)s < \frac{(2t+3)(3t-1)q}{3(2t-1)}< (t+2)q.
\end{gather*}
Now assume additionally that $2t+3<q/3$. If $p\nmid 2t+1$, then
$(g\circ\theta_s)(2t+1)=g((2t+1)s)=0$. Hence $(2t+1)s-tq < 2q/3$. That is,
\[q/2<s < \frac{(3t+2)q}{3(2t+1)}= c_{t+1}q.\] Similarly, if $p\nmid
2t+3$, we have $(2t+3)s-(t+1)q<2q/3$, and hence
\[q/2<s < \frac{(3t+5)q}{3(2t+3)}= c_{t+2}q.\]

Clearly, either $2t+1$ or $2t+3$ is not divisible by $p$. Recall that
$c_{t+2}< c_{t+1}$. We see that as long as $2t+3<q/3$,
\[ s\in [q/2, c_tq]_\zz \quad \Longrightarrow \quad s\in [q/2,
c_{t+1}q]_\zz.\] Let $m=[(q-9)/6]>(q-14)/6$. The base case for
$t=2$ is already verified in (\ref{eq:3}). It follows by induction
that $s\subseteq [q/2, c_{m+1}q]_\zz$. Note that
\[ c_{m+1}q-q/2= \frac{q}{6(2m+1)}<
\frac{q}{(2(q-14)+6)}=\frac{q}{2q-22}<1\] because $q\geq 25$ by
assumption, so the only integer in $[q/2, c_{m+1}q]_\zz$ is
$(q+1)/2$. Therefore, $s=(q+1)/2$. 
  \end{step}
Lemma~\ref{lem:possible_choices_of_s} follows by combining all the
above steps. 
\end{proof}
\begin{cor}\label{cor:primitive_cm_type}
  Suppose that $p\geq 5$, and $q$ is not $7$. For any $g\in \TZ_q$,
  $g\circ \theta_s=g$ if and only if $s=1$. 
\end{cor}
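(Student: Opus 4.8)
The plan is to combine Lemma~\ref{lem:possible_choices_of_s} with the explicit function of Subsection~\ref{subsec:elementary-elimination-for-s}. Suppose $g\in\TZ_q$ satisfies $g\circ\theta_s=g$ for some $s\neq 1$. Then in particular $g\circ\theta_s=g\in\TZ_q$, so by Lemma~\ref{lem:possible_choices_of_s} we have $s\in\SZ_q=\{2,(q+1)/2\}$. Since $2\cdot(q+1)/2\equiv 1\pmod q$, the two candidate values of $s$ generate the same cyclic subgroup $\dangle{2}\leq\umod{q}$; hence $g\circ\theta_2=g$ in either case, and it suffices to derive a contradiction from $g\circ\theta_2=g$.

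If $g\circ\theta_2=g$, then a fortiori $g\circ\theta_2\in\TZ_q$, so by the uniqueness assertion in Subsection~\ref{subsec:elementary-elimination-for-s} the function $g$ must be exactly the one defined in (\ref{eq:38}). I would then produce an even integer $a$ with $q/3<a<q/2$ and $\gcd(a,q)=1$. For such an $a$, formula (\ref{eq:38}) gives $g(a)=0$, while $2a$ satisfies $2q/3<2a<q$ and $\gcd(2a,q)=1$ (since $q$ is odd), so (\ref{eq:38}) gives $g(2a)=1$. This contradicts $g(2a)=(g\circ\theta_2)(a)=g(a)$, which finishes the argument.

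It remains to exhibit such an $a$. When $q\geq 25$ the interval $(q/3,q/2)$ has length $q/6>4$, hence contains at least four consecutive integers, two of which are consecutive even integers $2m,2m+2$; as $p\geq 5$ divides at most one of $2m,2m+2$, at least one of them is coprime to $q=p^r$, and that one may be taken for $a$. For the finitely many remaining prime powers $q\in\{5,11,13,17,19,23\}$ one checks directly that $a=2,4,6,6,8,8$ respectively works. (Here the hypothesis $q\neq 7$ is essential: for $q=7$ the interval $(7/3,7/2)$ contains no even integer, and indeed the conclusion genuinely fails in that case, cf. Remark~\ref{rem:case-q-is-7}.)

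I expect the only real subtlety to be the first paragraph, namely observing that being fixed by $\theta_{(q+1)/2}$ is equivalent to being fixed by $\theta_2$, and then keeping straight why $q=7$ is exceptional; everything else is a short verification against (\ref{eq:38}) together with the elementary finite check above.
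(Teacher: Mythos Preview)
Your argument is correct, but the paper's proof is considerably shorter and avoids the explicit identification of $g$ altogether. The paper simply observes that if $g\circ\theta_s=g$ with $s\neq 1$, then every nontrivial power of $s$ also fixes $g$ and hence lies in $\SZ_q$. Since $\SZ_q=\{2,(q+1)/2\}$ by Lemma~\ref{lem:possible_choices_of_s}, and these two elements are mutual inverses, we have $\dangle{s}=\dangle{2}$; but then $4=2^2$ is a nontrivial element of $\dangle{s}$, so $4\in\SZ_q$, forcing $4=(q+1)/2$, i.e.\ $q=7$. Your route instead pins down $g$ explicitly via the uniqueness assertion in Subsection~\ref{subsec:elementary-elimination-for-s} and then manufactures a concrete even witness $a\in(q/3,q/2)$ contradicting $g\circ\theta_2=g$. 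This is valid, and the finite check for small $q$ is fine, but it is more computational. The paper's argument never needs to look at the values of $g$ at all---only the size of $\SZ_q$ matters---and it makes immediately transparent why $q=7$ is the unique exception.
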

\begin{proof}
  Indeed, if $g=g\circ \theta_s$ and $s\neq 1$, then all nontrivial
  elements of the cyclic group $\dangle{s}$ lie in the two element set
  $\SZ_q=\{2, (q+1)/2\}$.  It follows that $4=2^2\in \SZ_q$. This is
  possible only if $q=7$ whence $(q+1)/2=4$ and $2\in \umod{7}$ has
  order 3.
\end{proof}


\begin{rem}\label{rem:case-q-is-7}
  If $q=7$, then up to relabeling, $h$ can be uniquely written as
  $g_1+g_2$, where $h, g_1, g_2$ are given by :
  \begin{center}
  \begin{tabular}{|c||c|c|c|c|c|c|c|}
\hline
    & 1 & 2 & 3 & 4 & 5 & 6 \\
\hline
  $h$ & 0 & 0 & 1 & 1& 2 & 2\\
  $g_1$& 0 & 0 & 1 & 0& 1 & 1 \\
  $g_2$& 0 & 0 & 0 & 1 & 1 & 1\\
\hline
  \end{tabular}
  \end{center}
It is clear that $g_1\circ \theta_2= g_1$. 
\end{rem}

\begin{lem}\label{lem:splitting_for_2_twist}
  Suppose that $p$ is an odd prime not equal to $3$, and $q\neq
  5$. There does not exist a function $g:\umod{q}\to \{0, 1\}$ 
 satisfies both of the following conditions:\\
 (i)\phantom{i} $g(a)+g(-a)=1$, $\; \forall\; a\in (\zz/q\zz)^\times$; \\
 (ii) $h= g+g\circ \theta_s$ for some $s\in \umod{q}$.
\end{lem}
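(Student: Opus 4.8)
The plan is to reduce, by quoting the results already established about $\SZ_q$, to refuting a single explicit identity, and then to exhibit one residue class at which it fails. First I would record that $q=p^r\geq 7$ here (since $p\geq 5$ and $q\neq 5$), so $q\notin\{4,6,10\}$ and Proposition~\ref{prop:exist-value-one} gives some $a\in\umod q$ with $h(a)=1$; in particular $h$ is not everywhere even, so the $s$ in condition (ii) cannot be $1$ (otherwise $h=2g$). Now suppose for contradiction that such a $g$ exists. Then $g$ and $g\circ\theta_s$ are $\{0,1\}$-valued, each satisfies $g'(a)+g'(-a)=1$ (for $g\circ\theta_s$ apply condition (i) at $sa$), and their sum is $h$; so by the observation immediately preceding the lemma, $g,\,g\circ\theta_s\in\TZ_q$. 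Hence $s\in\SZ_q$, and Lemma~\ref{lem:possible_choices_of_s} forces $s\in\{2,(q+1)/2\}$.

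Next I would collapse the two cases into one. If $s=(q+1)/2=2^{-1}$ in $\umod q$, replace $g$ by $\hat g:=g\circ\theta_{2^{-1}}$: this still satisfies $\hat g(a)+\hat g(-a)=1$, and $\hat g+\hat g\circ\theta_2=g\circ\theta_{2^{-1}}+g=h$, so after this substitution we may assume $s=2$. Then by the uniqueness statement in Subsection~\ref{subsec:elementary-elimination-for-s}, the function of $\TZ_q$ whose $\theta_2$-translate is again in $\TZ_q$ is exactly the explicit $g_0$ defined by (\ref{eq:38}); therefore $g=g_0$, and the hypothesis has become the single identity $h=g_0+g_0\circ\theta_2$.

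Finally I would refute this identity by one evaluation. Let $a$ be whichever of $(q-1)/2$ and $(q-3)/2$ is odd (these are consecutive integers, so exactly one is). Because $p\geq 5$ one checks $\gcd(a,q)=1$ and $\gcd(2a,q)=1$: indeed $p\nmid 2$, $p\nmid q-1$ always, and $p\nmid q-3$ since $p\nmid 3$; and $2a\in\{q-1,q-3\}$. One also checks $q/3<a<q/2$: for $a=(q-1)/2$ this is immediate from $q>3$, and if $a=(q-3)/2$ then $(q-1)/2$ is even, i.e.\ $q\equiv 1\pmod 4$, which for a prime power with $p\geq 5$ and $q\neq 5$ forces $q\geq 13>9$, again giving $a>q/3$. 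Then $h(a)=1$ since $q/3<a<2q/3$; by (\ref{eq:38}), $g_0(a)=1$ because $a$ is odd and lies in $(q/3,2q/3)$; and since $a<q/2$ the residue $2a$ is the integer $2a\in(2q/3,q)$, coprime to $q$, so (\ref{eq:38}) gives $g_0(2a)=1$. Hence $(g_0+g_0\circ\theta_2)(a)=2\neq 1=h(a)$, contradicting $h=g_0+g_0\circ\theta_2$.

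The step needing the most care is the last one: it is elementary but it is precisely where the hypotheses $p\neq 3$ and $q\neq 5$ are genuinely used — for $q=5$ one actually has $h=g_0+g_0\circ\theta_2$, so the statement is false there — so the choice of witness residue and the verification that it is coprime to $q$ and lands in the correct third of $\umod q$ must be carried out with the small prime powers in mind. Everything before it is bookkeeping that reduces to citing Lemma~\ref{lem:possible_choices_of_s} together with the uniqueness remark of Subsection~\ref{subsec:elementary-elimination-for-s}.
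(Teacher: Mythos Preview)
Your proof is correct and follows essentially the same route as the paper: reduce (via $\SZ_q=\{2,(q+1)/2\}$ and the uniqueness in Subsection~\ref{subsec:elementary-elimination-for-s}) to the case $s=2$ with $g$ given by (\ref{eq:38}), then take the odd one of $(q-1)/2,\,(q-3)/2$ as a witness where $g(a)+g(2a)=2\neq 1=h(a)$. You are in fact more careful than the paper in verifying $a>q/3$ and $\gcd(a,q)=1$, but the argument is the same.
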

\begin{proof}
  We prove by contradiction. Suppose such a function $g$ exits. Since
  $h(a)=0$ for all $a\in [1, q/3]_\zz$, condition \textit{(ii)}
  implies that both $g$ and $g\circ \theta_s$ lies in
  $\TZ_q$. Clearly, $s\neq 1$ since there exists $a\in \umod{q}$ such
  that $h(a)=1$.  Without lose of generality, we may assume that $s=2$
  so $g$ is given by (\ref{eq:38}).  Let us set $a_0= (q-1)/2$ if
  $q\equiv 3 \pmod{4}$; and $a_0= (q-3)/2$ if $q\equiv 1 \pmod{4}$,
  then $a_0$ is odd, $q/3<a_0< q/2$, and $(a_0, p)=1$. But we have
  $g(a_0)=1$, and $(g\circ\theta_2)(a_0)=g(2a_0)=1$.  Therefore,
  $g(a_0)+(g\circ\theta_2)(a_0)=2$. On the other hand,
  $f(a_0)=[3a_0/q]=1$. Contradiction!
\end{proof}




\begin{rem}\label{rem:case-q-is-5}
 Lemma~\ref{lem:splitting_for_2_twist} fails for $q=5$ since
 $h=g_1+g_2$, where $h$, $g_1$ and $g_2$ are given by the following table:
  \begin{center}
  \begin{tabular}{|c||c|c|c|c|c|}
\hline
    & 1 & 2 & 3 & 4 \\
\hline
  $h$ & 0 & 1 & 1 & 2\\
  $g_1$& 0 & 1 & 0 & 1 \\
  $g_2$& 0 & 0 & 1 & 1\\
  $g_2\circ \theta_2$& 0 & 1 & 0 & 1\\
\hline
  \end{tabular}
  \end{center}
  One sees that $g_2\circ \theta_2=g_1$.
\end{rem}

\begin{rem}
\label{subsec:uniqueness-of-g-in-case-q-is-power-of-2}
  Let $q=2^r$ with $r\geq 3$, and $s=2^{r-1}-1$. Then for any positive
  odd number $2t+1<2^{r-1}$, we have 
\[ s(2t+1) = (2^{r-1}-1)(2t+1)= 2^rt+2^{r-1}-(2t+1)\equiv
2^{r-1}-(2t+1) \pmod{2^r}.\]
In particular, $\theta_s([1, 2^{r-1}]_\zz)=[1,
2^{r-1}]_\zz$. Since $\theta_s$ is bijective, $\theta_s([2^{r-1}, 2^r]_\zz)=[2^{r-1}, 2^r]_\zz$. Let $g: (\zz/q\zz)^\times\to \{ 0,1\}$ be the
function defined by
\begin{equation}
  \label{eq:30}
 g(a)=\left\{
\begin{aligned}
  0 \qquad &\text{ if } a\in [1, 2^{r-1}]_\zz, \\
  1 \qquad &\text{ if } a\in [2^{r-1}, 2^r]_\zz.
\end{aligned}\right.
\end{equation}
Then $g\in \TZ_q$ and $g=g\circ \theta_s$ with $s=2^{r-1}-1$. In other
words, Corollary~\ref{cor:primitive_cm_type} fails for all powers of
$2$ that's greater or equal to $8$.

On the other hand, let $g'\in \TZ_q$ be any function such that
$g'\circ \theta_{(q/2-1)}\in \TZ_q$ as well. For any $a\in [q/6,
q/2]_\zz$, we have $g'(a)= (g'\circ \theta_s)(q/2-a)=0$ since
$q/2-a\in [0, q/3]_\zz$. Combining with the fact that $g'$ also
vanishes on $[1, q/3]_\zz$, we see that $g'$ coincides with $g$.  In
conclusion, if $q=2^r\geq 8$ and $s=q/2-1$, there is a unique $g\in
\TZ_q$ such that $g\circ \theta_s\in \TZ_q$.
\end{rem}


\begin{sect}\label{subsec:special-form-of-g-when-s-has-order-6}
Suppose that $q=3^r\geq 9$. We claim that $\SZ_q\supseteq
\{2, (q+1)/2, q/3-1, 2q/3-1\}$. Note that 
\[ (q/3-1)(2q/3-1)= 2q^2/9 - q +1 \equiv 1 \pmod{q}, \] so
$(q/3-1)^{-1}= 2q/3-1\in \umod{q}$. It is enough to show that
$\SZ_q\ni (q/3-1)$. Let $s=q/3-1$, and $a=3t+a_0$ with $a_0=1$ or
$2$. Then
\[ sa= qa/3- a =q(3t+a_0)/3 -a \equiv a_0q/3 -a \pmod{q}.\] Suppose
$a\in [1, q/3]_\zz$. If $a_0=1$, then $\dbracket{sa}= q/3-a\in [1,
q/3]_\zz$; and if $a_0=2$, then $\dbracket{sa} = 2q/3-a\in [q/3,
2q/3]_\zz$. Moreover, if $a\equiv 1\pmod{3}$ and $a\in [q/3,
2q/3]_\zz$, then one easily shows that $\exists a'\in [1, q/3]_\zz$
such that $\dbracket{sa'}=a$. Therefore, if $g\in \TZ_q$ is a function
such that $g\circ \theta_s\in \TZ_q$, then $g$ must be of the form
\begin{equation}
  \label{eq:29}
 g(a)=\left\{
  \begin{aligned}
   0    &\qquad     \text{ if } \quad a \in [1, q/3]_\zz,\\
   0    &\qquad     \text{ if } \quad a\in [q/3, 2q/3]_\zz \text { and
     } a \equiv 1 \pmod{3},\\
   1   &\qquad     \text{ if } \quad a \in [q/3, 2q/3]_\zz \text { and
     } a  \equiv 2 \pmod{3},\\
   1    &\qquad     \text{ if }\quad  a \in [2q/3, q]_\zz.\\
  \end{aligned}\right.
\end{equation}
Last we note that the order of $s= q/3-1\in \umod{q}$ is $6$. Indeed,
since $q\geq 9$, 
\[ s^3= (q/3-1)^3= q^3/27 - 3 (q/3)^2 + 3 (q/3) -1 \equiv -1 \pmod{q}.\]
\end{sect}

\begin{sect}
  Suppose that $s=q/3-1$, and $g$ be as in (\ref{eq:29}) so that
  $g\circ \theta_s\in \TZ_q$. Given $a\in [q/3, 2q/3]_\zz$, if
  $a\equiv 1 \pmod{3}$, then $(g\circ\theta_s)(a)=g(sa)=1$; and if
  $a\equiv 2\pmod{3}$, then $(g\circ \theta_s)(a)=0$.  It follows that
\[ h = g+ g\circ \theta_{(q/3-1)}\]
for this particular $g$. 
\end{sect}

Similar to $[x,y]_\zz$, we define 
\[ (x,y)_\zz:=\{z\in \zz\mid x< z< y, \quad \gcd(z, q)=1\}.\]

\begin{lem}\label{lem:pos_choice_of_s_when_q_is_power_of_3}
Suppose that $q=3^r\geq 9$, then $\SZ_q=\{
2, (q+1)/2, q/3-1, 2q/3-1\}$.  
\end{lem}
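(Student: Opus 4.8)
The plan is to reduce the lemma to a purely combinatorial statement about multiplication on $\umod{q}$, and then to settle that statement by a narrowing argument closely modeled on the proof of Lemma~\ref{lem:possible_choices_of_s}.

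The first step is the reduction. Given $s\in\SZ_q$, $s\neq 1$, pick $g\in\TZ_q$ with $g\circ\theta_s\in\TZ_q$. Since $g\equiv 1$ on $[2q/3,q]_\zz$ and the condition $g\circ\theta_s\in\TZ_q$ forces $g(sa)=0$ for every $a\in[1,q/3]_\zz$, we obtain the necessary condition
\[ (\star_s):\qquad \dbracket{sa}<2q/3\ \text{ for all } a\in[1,q/3]_\zz. \]
In fact $(\star_s)$ is also sufficient: given it, define $g$ to be $0$ on $[1,q/3]_\zz$, $1$ on $[2q/3,q]_\zz$, $0$ on $\theta_s([1,q/3]_\zz)\cap[q/3,2q/3]_\zz$, $1$ on the negatives of that last set, and arbitrarily on the rest of $[q/3,2q/3]_\zz$ subject to $g(a)+g(-a)=1$; this is consistent because $a$ and $q-a$ are never both in $[1,q/3]_\zz$, and together with the explicit admissible $g$ found in Subsections~\ref{subsec:elementary-elimination-for-s} and \ref{subsec:special-form-of-g-when-s-has-order-6} it also recovers the uniqueness assertion of Subsection~\ref{subsec:notation-arithmetic-result-on-cm-type}. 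Since the inclusion $\SZ_q\supseteq\{2,(q+1)/2,q/3-1,2q/3-1\}$ was established there, what remains is to show that any $s\neq 1$ satisfying $(\star_s)$ lies in that four-element set.

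For this I would first run the elementary eliminations of the opening steps of Lemma~\ref{lem:possible_choices_of_s}: taking $a=1$ in $(\star_s)$ gives $s<2q/3$, and taking $a=2$ (legitimate since $3\nmid 2$) rules out $q/3<s<q/2$, so $s\in[2,q/3)_\zz$ or $s\in(q/2,2q/3)_\zz$, with $2$, $q/3-1$ in the first interval and $(q+1)/2$, $2q/3-1$ in the second. A finite list of small prime powers — at least $q=9$ and $q=27$, and as many more as the estimates below require — is disposed of by computing $\theta_s$ on $[1,q/3]_\zz$ directly. For the remaining $q$ one narrows $s$ inside each interval by multiplying it by suitable $a\in[1,q/3]_\zz$ and invoking $(\star_s)$; the one essential change from the $p\ge 5$ proof is that every multiplier must be coprime to $3$, which is arranged as usual from the fact that of $a,a+2$ at most one is divisible by $3$. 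The crucial structural point is that the multiplier $3$, used in Lemma~\ref{lem:possible_choices_of_s} to make the base step of the induction and to kill the residues nearest $q/3$ and $2q/3$, is now forbidden; this is exactly what lets $q/3-1$ and $2q/3-1$ survive (for them $3s$ is a non-unit mod $q$), and it forces a separate treatment of the ``danger zones'' near $q/3$ and near $2q/3$. Concretely, for $s$ bounded away from the endpoints one finds a single coprime multiplier $a$ with $sa\in(2q/3,q)$, contradicting $(\star_s)$; for $s=q/3-j$ (or $s=2q/3-j$) with $2\le j$ in the admissible range one takes a multiplier $a$ in the appropriate residue class mod $3$ lying in $(q/(3j),2q/(3j))$, which makes $\dbracket{sa}=4q/3-ja\in(2q/3,q)$; and the range of $s$ near $q/2$ is handled by the odd-multiplier induction of Step~5 of Lemma~\ref{lem:possible_choices_of_s}, adapted to skip multiples of $3$, cornering $(q+1)/2$. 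The symmetry $s\in\SZ_q\Leftrightarrow s^{-1}\in\SZ_q$ can be used to cut down the bookkeeping in one of the two intervals.

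The main obstacle is precisely this last case analysis: arranging a clean, uniform supply of coprime-to-$3$ multipliers that covers the ``middle'' values of $s$ (those close to none of $2$, $q/3$, $q/2$, $2q/3$) with no gap, and pinning down the exact threshold below which one must argue by hand, so that the single-multiplier, the $s=q/3-j$, and the odd-multiplier arguments genuinely overlap. Everything else is routine.
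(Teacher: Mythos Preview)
Your plan is correct and is essentially the paper's own approach: both reduce to the condition $(\star_s)$ (the paper uses it implicitly throughout) and then eliminate all $s\notin\{2,(q+1)/2,q/3-1,2q/3-1\}$ by an interval-narrowing argument using multipliers coprime to $3$, with separate treatments near $q/3$, $q/2$, and $2q/3$ and a finite check for small $q$. Your $j$-parametrized choice of a single multiplier $a\equiv 1$ (resp.\ $2$) $\pmod 3$ in $(q/(3j),2q/(3j))$ for $s=q/3-j$ (resp.\ $2q/3-j$) is a clean variant of the paper's inductive sequences $6t+7$ and $6t+5$, and the overlap/threshold issue you flag as the main obstacle is exactly what the paper resolves by those explicit inductions together with the ad hoc multipliers $4,5,7,11,13$ for the initial subintervals.
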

\begin{proof}
  The proof is follows the same ideas of that of
  Lemma~\ref{lem:possible_choices_of_s}, except extra care must be
  taken.

  First, if $q=9$, we only need to prove that $s\neq 4$. Indeed,
  $2<9/3=3$, but $(g\circ \theta_4)(2)= g(8)=1$. We assume that $q\geq
  27$ for the rest of the proof. In particular, $q/3\geq 9$.

  Clearly, $s\in [1, 2q/3]_\zz$. Similar arguments as Step 3 of
  Lemma~\ref{lem:possible_choices_of_s} shows that $s\not\in (q/3,
  q/2)_\zz$ and $s\not\in (3, q/6)_\zz$. Suppose that $s\in (q/6,
  q/4)_\zz$, then $2q/3\leq 4s \leq q$, and therefore $(g\circ \theta_s)
  (4)= g(4s)=1$. Contradiction. Suppose that $s\in (q/4, 2q/7)_\zz$,
  then $5q/3 < 7q/4 < 7s < 2q$, and hence $(g\circ \theta_s) (7)=
  g(7s)=1$. Contradiction. We have shown that if $s\in (3, q/2)_\zz$,
  then $s\in (2q/7, q/3)_\zz$.  In particular, if $q=27$, then the
  only integer in $(2q/7, q/3)_\zz$ is $q/3-1=8$. We may further
  assume that $q\geq 81$.

Let $t\in \nn$. Suppose that $s\in (2tq/(6t+1), q/3)_\zz$. Notice that
this is true for $t=1$. Moreover, 
\[ (6t+7)s > \frac{2t(6t+7)q}{6t+1}>
(2t+1)q+\frac{2q}{3}=\frac{(6t+5)q}{3}. \]

Indeed,
\[ \frac{2t(6t+7)}{6t+1}-\frac{(6t+5)}{3}=
\frac{6t(6t+7)-(6t+1)(6t+5)}{3(6t+1)}= \frac{6t-5}{3(6t+1)}>0.\]
Therefore, if $s \in (2tq/(6t+1), 2(t+1)q/(6t+7))_\zz$, then 
\[ (2t+1)q+\frac{2q}{3}    < (6t+7)s < 2(t+1)q.\]
We get a contradiction as long as $6t+7 <q/3$. Since
\[ \frac{2t}{6t+1}= \frac{1}{3} - \frac{1}{3(6t+1)},\] it is an
increasing function in $t$. Our bound are refined each time we
increase $t$. Take $t_0=[(q-3)/18]= (q-9)/18$. Then we get
\[\frac{q}{3}- \frac{2t_0q}{(6t_0+1)}=
\frac{q}{3(6t_0+1)}=\frac{q}{q-6}\leq \frac{27}{25}.\]
It follows that if $s\in (2q/7, q/3)_\zz$, then $s=q/3-1$.

Now assume that $s\in (3q/5, 2q/3)_\zz$. If $q=27$, then the only
element in $(3q/5, 2q/3)_\zz$ is $s=17= 2q/3-1$. So we may assume that
$q\geq 81$ in this case.  Suppose that $s\in (3q/5, 20q/33)_\zz$, then 
\[ 7q+\frac{4q}{5}=\frac{39q}{5}< 13s < \frac{260q}{33}=
7q+\frac{29q}{33}. \] We have $(g\circ \theta_s)
(13)=1$. Contradiction.  Now suppose that $s \in (20q/33, 7q/11)_\zz$,
then
\[  6q+\frac{2q}{3}<   11s < 7q.\]
Therefore, $g\circ \theta_s (11)=1$ but $g(11)=0$. Contradiction
again. So we must have $s \in (7q/11, 2q/3)_\zz$.

Now let $t\in \nn$. Suppose that
$(4t-1)q/(6t-1)<s < 2q/3$. Clearly, this is true for $t=2$. Now 
\[\frac{4t-1}{6t-1}= \frac{2}{3}-\frac{1}{3(6t-1)}.\]
So $\frac{4t-1}{6t-1}$ as a function in $t$ is increasing. 
Moreover, 
\[ \frac{(4t-1)}{6t-1}\cdot (6t+5) > 4t+2+\frac{2}{3}.\]
Indeed, 
\[
\begin{split}
\frac{(4t-1)}{6t-1}\cdot (6t+5) - (4t+2+\frac{2}{3})&=
\frac{3(4t-1)(6t+5)-(6t-1)(12t+8)}{3(6t-1)}  \\
&= \frac{6t-7}{3(6t-1)}> 0 \qquad \text{ if } t \geq 2.
\end{split}
\]
Therefore, if $6t+5\in (1, q/3)_\zz$, then $s\in ((4t-1)q/(6t-1),
2q/3)_\zz$ implies that $s\in ((4t+3)q/(6t+5),
2q/3)_\zz$. 

Let $t_0= [(q+3)/18]= (q-9)/18$. We then have 
\[ \frac{2q}{3}-\frac{(4t_0-1)q}{6t_0-1}=
\frac{q}{3(6t_0-1)}=\frac{q}{q-12}\leq \frac{27}{23}.\]   
This shows the  only possible $s\in (3q/5, 2q/3)_\zz$ is $s= 2q/3-1$. 

We need to handle the remaining case $q/2<s < 3q/5$. Note that $5q/2<
5s < 3q$, so we must have $5q/2< 5s < 8q/3$. That is $q/2< s< 8q/15$. 

Assume that $q/2<s< (9t-1)q/(3(6t-1))$. Then this holds for $t=1$. 
\[ \frac{9t-1}{3(6t-1)}= \frac{1}{2}+\frac{1}{6(6t-1)}.\]
So $\frac{9t-1}{3(6t-1)}$ is a decreasing function in $t$. 
On the other hand, 
\[ \frac{(9t-1)(6t+5)}{3(6t-1)} > (3t+2)+\frac{2}{3}. \]
Indeed, 
\[
\begin{split}
\frac{(9t-1)(6t+5)}{3(6t-1)}- ((3t+2)+\frac{2}{3})&=
\frac{(9t-1)(6t+5)-(9t+8)(6t-1) }{3(6t-1)}\\
&=\frac{1}{(6t-1)}>0.
\end{split}\]
Therefore, if $6t+5< q/3$, then $s\in (q/2,(9t-1)q/(3(6t-1)))_\zz$
implies that $s\in (q/2,(9t+8)q/(3(6t+5)))_\zz$. 

Now we take the largest $t_0= [(q+3)/18]= (q-9)/18$. Then 
\[\frac{(9t_0-1)q}{3(6t_0-1)}-\frac{q}{2}=\frac{q}{6(6t_0-1)}=
\frac{q}{2q-24}\leq \frac{9}{10} \qquad \text{ since } q\geq 27. \]
This takes care all the cases for $q=3^r$. 
\end{proof}


\begin{cor}\label{cor:primitive-cm-type-p-is-3}
  Let $q = 3^r\geq 9$. For any $g\in \TZ_q$, $g\circ \theta_s=g$ if
  and only if $s=1$. 
\end{cor}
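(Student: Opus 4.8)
The plan is to mimic the proof of Corollary~\ref{cor:primitive_cm_type}, now feeding in the description of $\SZ_q$ for $q=3^r\geq 9$ provided by Lemma~\ref{lem:pos_choice_of_s_when_q_is_power_of_3}. Suppose $g\in\TZ_q$ satisfies $g\circ\theta_s=g$ for some $s\in\umod{q}$ with $s\neq 1$; I will derive a contradiction. The first step is the elementary remark that $g\circ\theta_s=g$ forces $g\circ\theta_t=g$ for every $t\in\dangle{s}$ (iterate, and use that $\theta_{s^{-1}}\circ\theta_s=\mathrm{id}$), so in particular $g\circ\theta_t\in\TZ_q$ for every $t$ in the cyclic group $\dangle{s}$. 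By the very definition of $\SZ_q$ this yields the containment $\dangle{s}\setminus\{1\}\subseteq\SZ_q=\{2,\,(q+1)/2,\,q/3-1,\,2q/3-1\}$.

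Next I would rule out each possibility for $s$ by an order/counting argument. If $s\in\{q/3-1,\,2q/3-1\}$, then since $2q/3-1=(q/3-1)^{-1}$ (recorded in Subsection~\ref{subsec:special-form-of-g-when-s-has-order-6}) we have $\dangle{s}=\dangle{q/3-1}$, which has order $6$ by the same subsection; hence $\dangle{s}\setminus\{1\}$ has five elements and cannot be contained in the four-element set $\SZ_q$. If instead $s\in\{2,\,(q+1)/2\}$, then since $(q+1)/2=2^{-1}$ we have $\dangle{s}=\dangle{2}$, and because $q=3^r\geq 9$ we have $4\not\equiv 1\pmod q$, so $4=2^2$ is a nontrivial element of $\dangle{s}$ and must therefore lie in $\SZ_q$. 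But $4\neq 2$, and $4=(q+1)/2$, $4=q/3-1$, $4=2q/3-1$ would force $q=7$, $q=15$, $q=15/2$ respectively, none of which is a power of $3$; so $4\notin\SZ_q$, a contradiction. Since both cases are impossible, $s=1$, which is the assertion of the corollary.

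I do not expect any genuine obstacle here: the substantive input, Lemma~\ref{lem:pos_choice_of_s_when_q_is_power_of_3}, is already established, and what remains is the observation that a cyclic subgroup of $\umod{q}$ whose nontrivial elements all lie in a fixed four-element set must be very small, together with the order-$6$ fact for $q/3-1$ and the fact that $\dangle{2}$ already contains $4$. The only mild care required is to notice that $(q+1)/2$ and $2q/3-1$ are the inverses of $2$ and $q/3-1$, so they contribute no genuinely new cyclic subgroups to examine.
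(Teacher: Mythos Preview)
Your proof is correct and follows essentially the same approach as the paper's: both use Lemma~\ref{lem:pos_choice_of_s_when_q_is_power_of_3} to force $\dangle{s}\setminus\{1\}\subseteq\SZ_q$, then eliminate $s\in\{q/3-1,2q/3-1\}$ by the order-$6$ counting argument and eliminate $s\in\{2,(q+1)/2\}$ by observing $4\in\dangle{s}\setminus\SZ_q$. The only cosmetic difference is that the paper verifies $4\notin\SZ_q$ by a size comparison (noting $\SZ_9=\{2,5\}$ and that the other listed elements exceed $4$ when $q>9$), whereas you solve the equations $4=(q+1)/2$, $4=q/3-1$, $4=2q/3-1$ directly; your phrase ``the four-element set $\SZ_q$'' is a slight overstatement for $q=9$, but the argument is unaffected.
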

\begin{proof}
  Suppose that $g\circ \theta_s=g$ and $s\neq 1$.  By
  Lemma~\ref{lem:pos_choice_of_s_when_q_is_power_of_3}, $s\in
  \SZ_q=\{2, (q+1)/2, q/3-1, 2q/3-1\}$, and all nontrivial elements of
  the cyclic group $\dangle{s}$ lie in $\SZ_q$.  If $s=2$ or
  $(q+1)/2$, then $4\in \SZ_q$. On the other hand, if $q=9$, then
  $\SZ_q=\{2, 5\}$; if $q>9$, then $q/3-1, (q+1)/2$ and $2q/3-1$ are
  all strictly greater than $4$. So $s\not \in \{2, (q+1)/2\}$. If
  $s=q/3-1$ or $2q/3-1$, then the six element group $\langle s\rangle$
  will fix $g$. However, there are at most 4 elements in
  $\SZ_q$. Contradiction.
\end{proof}

\begin{rem}\label{rem:isogeny-q-is-9}
  When $q=9$, up to labeling, there is a unique way to write
  $h=g_1+g_2$, where $h$, $g_1$ and $g_2$ are given by the following
  table:
  \begin{center}
  \begin{tabular}{|c||c|c|c|c|c|c|}
\hline
    & 1 & 2 & 4 & 5 & 7 & 8 \\
\hline
  $h$ & 0 & 0 & 1 & 1 & 2 & 2 \\
  $g_1$& 0 & 0 & 0 & 1 & 1 & 1 \\
  $g_2$& 0 & 0 & 1 & 0 & 1 & 1\\
  $g_1\circ \theta_2$& 0 & 0 & 1 & 0 & 1 & 1\\
\hline
  \end{tabular}
  \end{center}
  One sees that $g_2=g_1\circ \theta_2$.
\end{rem}


\begin{lem}\label{lem:not-isogenous-q-power-of-3}
  Suppose that $q=3^r\geq 27$, and $g\in \TZ_q$ be the function given
  by (\ref{eq:29}).  Let $h=g_1+g_2$ with $g_i\in \TZ_q$.  Then
  $g_2=g_1\circ \theta_s$ for some $s\in \umod{q}$ if and only if the
  pair $(s, g_1)$ coincides with $(q/3-1,g)$ or $(2q/3-1, g\circ
  \theta_{(q/3-1)})$. In particular, $g\not\in \{ g_1, g_2\}$, then
  $g_2\neq g_1\circ \theta_s$ for any $s\in \umod{q}$.
\end{lem}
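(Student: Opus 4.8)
The plan is to reduce the description of admissible pairs $(s,g_1)$ to the already-established list $\SZ_q$, and then to eliminate the two values of $s$ not appearing in the statement by exhibiting a point at which the putative identity $h=g_1+g_1\circ\theta_s$ fails. For the preliminaries: since $q=3^r\geq 27\notin\{4,6,10\}$, Proposition~\ref{prop:exist-value-one} supplies some $a$ with $h(a)=1$, so $g_1\neq g_2$ and hence $s\neq 1$. As $h$ vanishes on $[1,q/3]_\zz$, so do $g_1$ and $g_2=g_1\circ\theta_s$, and together with $g_i(a)+g_i(-a)=1$ this shows $g_1,g_2\in\TZ_q$. Thus $g_1\in\TZ_q$, $g_1\circ\theta_s\in\TZ_q$ and $s\neq 1$, so Lemma~\ref{lem:pos_choice_of_s_when_q_is_power_of_3} forces $s\in\SZ_q=\{2,(q+1)/2,\,q/3-1,\,2q/3-1\}$; by the uniqueness recorded in Subsection~\ref{subsec:notation-arithmetic-result-on-cm-type}, $g_1$ is then completely determined by $s$. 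Writing $g'$ for the function defined in (\ref{eq:38}) and $g$ for that of (\ref{eq:29}) (the one named in the statement), one has $g_1=g'$ if $s=2$, $g_1=g'\circ\theta_2$ if $s=(q+1)/2=2^{-1}$, $g_1=g$ if $s=q/3-1$, and $g_1=g\circ\theta_{(q/3-1)}$ if $s=2q/3-1=(q/3-1)^{-1}$; here I use $(q/3-1)(2q/3-1)\equiv 1\pmod q$ from Subsection~\ref{subsec:special-form-of-g-when-s-has-order-6}.

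Next I would dispose of the two ``good'' values via the identity $h=g+g\circ\theta_{(q/3-1)}$ established just after (\ref{eq:29}). If $s=q/3-1$ then $g_1=g$ and $g_2=h-g=g\circ\theta_{(q/3-1)}=g_1\circ\theta_s$, giving $(s,g_1)=(q/3-1,g)$. If $s=2q/3-1$ then $g_1=g\circ\theta_{(q/3-1)}$, $g_2=h-g_1=g$, and $g_1\circ\theta_s=g\circ\theta_{(q/3-1)(2q/3-1)}=g=g_2$, giving $(s,g_1)=(2q/3-1,g\circ\theta_{(q/3-1)})$. Reading these two computations in reverse yields the ``if'' direction, and since $g$ occurs among $\{g_1,g_2\}$ in both admissible pairs, the ``in particular'' clause follows immediately.

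It remains to rule out $s\in\{2,(q+1)/2\}$. Both cases give the same unordered pair of summands, $g_1+g_2=g'+g'\circ\theta_2$ (for $s=2$ this is clear, and for $s=2^{-1}$ we have $g_1=g'\circ\theta_2$ and $g_2=g_1\circ\theta_{2^{-1}}=g'$), so it is enough to produce an $a$ with $g'(a)+g'(2a)\neq h(a)$. I would take $a$ even with $3\nmid a$ and $q/2<a<2q/3$; such an $a$ exists because $(q/2,2q/3)$ has length $q/6>4$, hence contains at least two even integers, of which at most one is divisible by $3$. For this $a$ we have $g'(a)=0$ (since $a$ is an even element of $[q/3,2q/3]_\zz$) and $\dbracket{2a}=2a-q\in(0,q/3)$, whence $g'(2a)=0$, while $h(a)=1$ --- a contradiction. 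So $s\in\{q/3-1,2q/3-1\}$, and together with the determination of $g_1$ above this establishes the ``only if'' direction. The only delicate step is this last estimate, which is precisely where the hypothesis $q\geq 27$ (rather than merely $q\geq 9$) is used: for $q=9$ the interval $(q/2,2q/3)$ contains no such $a$, and indeed Remark~\ref{rem:isogeny-q-is-9} exhibits a decomposition $h=g_1+g_1\circ\theta_2$ there, so the statement genuinely fails for the smaller powers of $3$.
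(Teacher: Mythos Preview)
Your proof is correct and follows essentially the same strategy as the paper: reduce to $s\in\SZ_q$ via Lemma~\ref{lem:pos_choice_of_s_when_q_is_power_of_3}, confirm the two admissible pairs, and rule out $s\in\{2,(q+1)/2\}$ by exhibiting a single $a$ at which $g'(a)+g'(2a)\neq h(a)$. The only difference is the choice of test point: the paper takes the odd element $a_0=q/3+2$ and obtains $g'(a_0)+g'(2a_0)=2$, whereas you take an even $a\in(q/2,2q/3)_\zz$ and obtain $g'(a)+g'(2a)=0$; both contradict $h(a)=1$, and both genuinely require $q\geq 27$.
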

\begin{proof}
  By subsection~\ref{subsec:special-form-of-g-when-s-has-order-6}, the
  pairs $(q/3-1,g)$ and $(2q/3-1, g\circ \theta_{(q/3-1)})$ satisfies
  the required conditions. On the other hand, it follows from
  Lemma~\ref{lem:pos_choice_of_s_when_q_is_power_of_3} that it is
  enough to show that there does not exists $g'\in \TZ_q$ such that
  $h=g'+g'\circ \theta_2$. Suppose that $g'$ is such a function.  By
  subsection~\ref{subsec:elementary-elimination-for-s}, it must be of
  the form given by (\ref{eq:38}) in order that $g'\circ \theta_2\in
  \TZ_q$. Let $a_0= q/3+2$. Then $3\nmid a_0$, $a_0$ is odd, and
  $2q/3<2a_0<q$ since $q\geq 27$. Therefore, $g'(a_0)=1$, and
  $(g'\circ \theta_2)(a_0)= g'(2a_0)= 1$.  On the other hand,
  $h(a_0)=1$, so $h(a_0)\neq g'(a_0)+ (g'\circ \theta_2)(a_0)$.
\end{proof}

\begin{rem}\label{rem:case-q-is-8}
If $q=8$, then $h$ can be uniquely written as $g_1+g_2$ (up to
relabeling) with $h, g_1, g_2$ given by 
  \begin{center}
  \begin{tabular}{|c||c|c|c|c|}
\hline
    & 1 & 3 & 5 & 7   \\
\hline
  $h$ & 0 & 1 & 1 & 2  \\
  $g_1$& 0 & 1 & 0 & 1  \\
  $g_2$& 0 & 0 & 1 & 1 \\
\hline
  \end{tabular}
  \end{center}
One easily checks that $g_1=g_1\circ \theta_5$, and $g_2=g_2\circ
\theta_3$. 
\end{rem}

\begin{lem}\label{lem:q-is-a-power-of-2}
Let $q=2^r\geq 16$, then $\SZ_q=\{q/2-1\}$.  
\end{lem}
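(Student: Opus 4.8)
The plan is to adapt the arguments of Lemmas~\ref{lem:possible_choices_of_s} and~\ref{lem:pos_choice_of_s_when_q_is_power_of_3} to the case $p=2$, using the simplification that for $q=2^r$ the residues coprime to $q$ are exactly the odd integers, so every test value is automatically a unit and nothing ever has to be skipped. The inclusion $\{q/2-1\}\subseteq\SZ_q$ is already known: by Remark~\ref{subsec:uniqueness-of-g-in-case-q-is-power-of-2}, the function $g$ of~(\ref{eq:30}) satisfies $g,\ g\circ\theta_{q/2-1}\in\TZ_q$. So I assume $s\in\umod q$ with $s\neq 1$ and $g\in\TZ_q$ with $g\circ\theta_s\in\TZ_q$, and must deduce $s=q/2-1$.

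The only tool needed is: $g(b)=1$ for every odd $b$ with $2q/3<b<q$, since then $q-b\in[1,q/3]_\zz$ forces $g(q-b)=0$ and hence $g(b)=1-g(q-b)=1$. Thus, exhibiting an odd $a_0\in[1,q/3]_\zz$ with $2q/3<\dbracket{sa_0}<q$ contradicts $(g\circ\theta_s)(a_0)=g(\dbracket{sa_0})=0$. In each case below such an $a_0$ is produced by placing it inside an explicit open interval of length $>2$ contained in $[1,q/3]$; any open interval of length $>2$ contains two consecutive integers, hence an odd one.

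Testing $a_0=1$ first gives $s\in[1,2q/3]_\zz$. For $q/3<s<q/2$ write $s=q/2-u$ with $u$ odd, $1\le u<q/6$; if $u=1$ we are done, and if $u\ge 3$ then $sa_0\equiv q/2-ua_0\pmod q$ for odd $a_0$, so whenever $q/2<ua_0<5q/6$ one has $\dbracket{sa_0}=3q/2-ua_0\in(2q/3,q)$, and the interval $(\,q/(2u),\,5q/(6u)\,)$ has length $q/(3u)>2$ and lies in $[1,q/3]$ (its upper end is $\le q/6$ because $u\ge 3$). For $q/2<s<2q/3$ write $s=q/2+u$ with $u$ odd, $1\le u<q/6$; one picks an odd $a_0$ in $(\,q/(6u),\,q/(2u)\,)$ when $u\ge 3$, or in $(\,q/6,\,q/3\,)$ when $u=1$ (both have length $>2$ and lie in $[1,q/3]$, where $q\ge 16$ is used), so that $q/6<ua_0<q/2$ and $\dbracket{sa_0}=q/2+ua_0\in(2q/3,q)$. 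For $3\le s<q/6$ one takes $a_0$ to be whichever of $\fl{q/s}$ and $\fl{q/s}-1$ is odd, so $a_0\le q/3$ and $2q/3<sa_0<q$. For $q/6<s<q/3$ one uses $a_0=5$ on $(q/6,q/5)$, $a_0=9$ on $[q/5,2q/9)$ (which needs $q\ge 27$ and is empty when $q=16$), and $a_0=3$ on $[2q/9,q/3)$, each time landing $\dbracket{sa_0}$ in $(2q/3,q)$. Since $q/2$ is even, these cases cover every odd residue in $[1,2q/3]$ except $s=q/2-1$, proving $\SZ_q=\{q/2-1\}$.

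The step I expect to be the main obstacle is the interval $q/3<s<q/2$: in Lemmas~\ref{lem:possible_choices_of_s} and~\ref{lem:pos_choice_of_s_when_q_is_power_of_3} it is killed in one line by testing with $a_0=2$, but $2$ is not coprime to $q$ here, so one must ``simulate doubling'' by odd multipliers, which is precisely what forces the interval-of-length-$>2$ bookkeeping and the need to track which residue class $\dbracket{sa_0}$ falls into. The only other delicate point is noting which sub-intervals collapse to $\varnothing$ when $q=16$.
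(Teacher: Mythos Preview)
Your proof is correct and takes a genuinely different route from the paper's. For the crucial ranges $q/3<s<q/2$ and $q/2<s<2q/3$, the paper runs an inductive narrowing argument in the style of Lemmas~\ref{lem:possible_choices_of_s} and~\ref{lem:pos_choice_of_s_when_q_is_power_of_3}: from $s\in(tq/(2t+1),\,q/2)$ one tests with $a_0=2t+3$ to force $s\in((t+1)q/(2t+3),\,q/2)$, and iterates until the remaining interval has length $<2$ and only $q/2-1$ survives. You instead parametrize $s=q/2\mp u$ with $u$ odd and exploit the identity $(q/2)a_0\equiv q/2\pmod q$ for odd $a_0$, reducing the problem to landing $ua_0$ in a fixed interval; since the admissible window for $a_0$ has length $q/(3u)>2$ it always contains an odd integer, and a single test value kills every $u\ge 3$ at once. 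This avoids the induction entirely and is both shorter and more transparent; it also makes explicit why the method of the earlier lemmas (testing with $a_0=2$) has to be replaced here. One cosmetic slip: for $u=3$ the upper endpoint $5q/(6u)=5q/18$ is \emph{not} $\le q/6$ as you write in parentheses, but it is still $<q/3$, which is all that is needed for $a_0\in[1,q/3]_\zz$.
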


\begin{proof}
Clearly, $s\in [1, 2q/3]_\zz$. If $q=16$, we need to show that $s\not
\in \{ 3, 5, 9\}$.  $s\neq 3$ or $5$ since $15=q-1\in [2q/3, q]_\zz$,
and both $3, 5\in [1, q/3]_\zz$. On the other hand, if $s=9$, then
$3s=27\equiv 11 \pmod{16}$, but $11\in [2q/3, q]_\zz$. Contradiction.

Assume that $q\geq 32$.  Suppose that $s\in [3, q/6]_\zz$ we treat it
similarly as the previous case.  Suppose that $s\in [2q/9, q/3]_\zz$,
$2q/3< 3s <q$. Contradiction.  Suppose that $s\in [q/6, 2q/9]_\zz$, then
$3q/2< 9s < 2q$. Therefore, $s\in [q/6, 5q/27]_\zz$.  Now $5q/6< 5s <
25q/27$. Contradiction.

Suppose that $s\in [q/3, q/2]_\zz$. Then $5q/3 < 5s <
5q/2$. Therefore, we must have $ 2q<5s< 5q/2$, that is $s\in [2q/5,
q/2]$. Suppose that for some $t\in \nn$ we have $tq/(2t+1)< s < q/2$. Then 
\[ \frac{t}{2t+1}= \frac{1}{2}- \frac{1}{2(2t+1)}.\]
So $t/(2t+1)$ is an increasing function in $t$. 
If $2t+3<q/3$, then 
\[  \frac{(2t+3)t}{2t+1}- (t+\frac{2}{3}) =
\frac{3t(2t+3)-(2t+1)(3t+2)}{3(2t+1)}= \frac{2t-2 }{3(2t+1)}>0.\] 
Therefore, 
\[ (t+ \frac{2}{3})q <  (2t+3)s < (t+3/2)q.\]
Therefore, $   (t+1)q< (2t+3)s < (t+3/2)q$, and hence $s\in
[(t+1)q/(2t+3), q/2]_\zz$. 

Take $t_0\in \nn$ such that $2t_0+1$ is the largest odd number that's
smaller than $q/3$, then $t_0\geq (q-8)/6$. 
\[ \frac{q}{2(2t_0+1)} =\frac{3q}{2(q-5)}\leq \frac{48}{27}<2.\]
If $s\in [\frac{t_0q}{2t_0+1}, q/2]_\zz$, then $s=q/2-1$. 
The case $s\in [q/2, 2q/3]_\zz$ is treated similarly as the previous
cases. Except we will have $s$ lies in an interval of the form $[q/2,
c_tq]$, and the length of the interval is less than 1, so there are no
integers in it. 
\end{proof}

\begin{lem}\label{lem:not-isogenous-q-is-power-of-2}
  Suppose that $q=2^r\geq 16$. There does not exist a function
  $g\in \TZ_q$ such that $h= g+ g\circ\theta_s$ for any $s\in
  \umod{q}$. 
\end{lem}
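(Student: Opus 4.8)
The plan is to leverage the two facts already established for $q=2^r\ge 16$: the computation $\SZ_q=\{q/2-1\}$ from Lemma~\ref{lem:q-is-a-power-of-2}, and the uniqueness statement in Remark~\ref{subsec:uniqueness-of-g-in-case-q-is-power-of-2}. So I would argue by contradiction: assume some $g\in\TZ_q$ satisfies $h=g+g\circ\theta_s$ for some $s\in\umod{q}$. The first step is to note that $g\circ\theta_s$ automatically lies in $\TZ_q$. Indeed, $g$ and $g\circ\theta_s$ are nonnegative and $h$ vanishes on $[1,q/3]_\zz$, so $g\circ\theta_s$ vanishes there too; and $(g\circ\theta_s)(a)+(g\circ\theta_s)(-a)=g(sa)+g(-sa)=1$. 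Hence either $s=1$ or $s\in\SZ_q$.

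The case $s=1$ is immediate: it would force $h=2g$, which is impossible because $h$ takes the value $1$ somewhere on $\umod{q}$ (e.g.\ by Proposition~\ref{prop:exist-value-one}, since $q=2^r\ge 16$ is not in $\{4,6,10\}$), whereas $2g$ takes only the values $0$ and $2$. For $s\neq 1$, Lemma~\ref{lem:q-is-a-power-of-2} gives $\SZ_q=\{q/2-1\}$, so $s=q/2-1=2^{r-1}-1$. Then Remark~\ref{subsec:uniqueness-of-g-in-case-q-is-power-of-2} pins down $g$: it is the unique function of $\TZ_q$ with $g\circ\theta_s\in\TZ_q$, namely the one given by (\ref{eq:30}), and that same remark records that this $g$ satisfies $g\circ\theta_s=g$. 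Consequently $h=g+g\circ\theta_s=2g$, the same contradiction as in the case $s=1$.

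I do not expect a real obstacle here: the entire combinatorial weight has already been carried by Lemma~\ref{lem:q-is-a-power-of-2} (determining $\SZ_q$) and by the explicit uniqueness/stability computation in Remark~\ref{subsec:uniqueness-of-g-in-case-q-is-power-of-2}, so this lemma comes out as a short corollary. The only points requiring any care are the elementary verification that $g\circ\theta_s\in\TZ_q$ (needed so that Lemma~\ref{lem:q-is-a-power-of-2} is applicable) and the trivial remark that $h$ is not everywhere even-valued, which rules out $h=2g$.
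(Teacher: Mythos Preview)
Your proposal is correct and follows essentially the same argument as the paper: both reduce to $s\in\{1,q/2-1\}$ via Lemma~\ref{lem:q-is-a-power-of-2}, invoke Remark~\ref{subsec:uniqueness-of-g-in-case-q-is-power-of-2} to identify $g$ as the function in (\ref{eq:30}) satisfying $g\circ\theta_s=g$, and conclude with the contradiction $h=2g$. You merely spell out a couple of steps (that $g\circ\theta_s\in\TZ_q$, and the separate treatment of $s=1$) that the paper leaves implicit.
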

\begin{proof}
  By Lemma~\ref{lem:q-is-a-power-of-2}, $s=1$ or $q/2-1$. We have
  observed in
  Subsection~\ref{subsec:uniqueness-of-g-in-case-q-is-power-of-2} that
  if $s=q/2-1$, then $g$ is given by (\ref{eq:30}).  But then it
  follows $g\circ \theta_s=g$, and $h=2g$, which is impossible, since
  there exists $a\in \umod{q}$ such that $h(a)=1$.
\end{proof}

\begin{lem}
  Let $q=2^r\geq 8$, $s=q/2-1\in \umod{q}$, and $\alpha=
  2i\sin(2\pi/q)$. Then $\qq(\alpha)$ is the subfield of $\qq(\zeta_q)$
  fixed by the subgroup $\dangle{s}$.
\end{lem}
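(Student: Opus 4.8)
The plan is to argue entirely inside the Galois group $\Gal(\qq(\zeta_q)/\qq)\cong\umod{q}$, writing $\sigma_a$ for the automorphism $\zeta_q\mapsto\zeta_q^a$. First I would record the basic identity: since $\zeta_q^{-1}=\overline{\zeta_q}$, we have $\alpha=2\sqrt{-1}\sin(2\pi/q)=\zeta_q-\overline{\zeta_q}=\zeta_q-\zeta_q^{-1}\in\qq(\zeta_q)$. I would also note that $s=q/2-1$ is odd, because $q/2=2^{r-1}$ is even once $r\geq 3$, so $s\in\umod{q}$, $s\not\equiv 1$, and moreover $s^2=q^2/4-q+1\equiv 1\pmod q$ since $q\mid q^2/4$ for $r\geq 3$. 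Hence $\dangle{s}=\{1,s\}$ has order $2$, and its fixed field $K:=\qq(\zeta_q)^{\dangle{s}}$ satisfies $[\qq(\zeta_q):K]=2$.

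Next I would check that $\alpha\in K$, i.e. that $\sigma_s(\alpha)=\alpha$. Since $\zeta_q^{q/2}=e^{\pi\sqrt{-1}}=-1$, we get $\zeta_q^{s}=\zeta_q^{q/2}\zeta_q^{-1}=-\zeta_q^{-1}$, and therefore
\[ \sigma_s(\alpha)=\zeta_q^{s}-\zeta_q^{-s}=-\zeta_q^{-1}-\bigl(-\zeta_q^{-1}\bigr)^{-1}=-\zeta_q^{-1}+\zeta_q=\alpha. \]
This yields the tower $\qq(\alpha)\subseteq K\subseteq\qq(\zeta_q)$.

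For the reverse inclusion I would clear denominators in $\alpha=\zeta_q-\zeta_q^{-1}$, obtaining $\zeta_q^2-\alpha\zeta_q-1=0$, so $\zeta_q$ is a root of a quadratic polynomial over $\qq(\alpha)$ and $[\qq(\zeta_q):\qq(\alpha)]\leq 2$. Combining this with the tower above and $[\qq(\zeta_q):K]=2$, multiplicativity of degrees gives $2\geq[\qq(\zeta_q):\qq(\alpha)]=[\qq(\zeta_q):K]\cdot[K:\qq(\alpha)]=2\cdot[K:\qq(\alpha)]$, forcing $[K:\qq(\alpha)]=1$, hence $K=\qq(\alpha)$, which is the assertion. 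There is no genuine obstacle here; the only points requiring a little care are the congruence $s^2\equiv 1\pmod q$ (which is where the hypothesis $q=2^r$ with $r\geq 3$ enters) and the sign conventions in $\alpha=\zeta_q-\zeta_q^{-1}$ and $\zeta_q^{q/2}=-1$.
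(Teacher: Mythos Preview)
Your proof is correct and follows essentially the same route as the paper's: both identify $\alpha$ with $\zeta_q-\zeta_q^{-1}$ (the paper writes this as $\zeta_q+\zeta_q^{s}$, which is the same since $\zeta_q^{s}=-\zeta_q^{-1}$), verify it lies in the fixed field, and then use the quadratic $x^2-\alpha x-1=0$ to bound $[\qq(\zeta_q):\qq(\alpha)]\leq 2$. Your argument is slightly more explicit in checking that $s^2\equiv 1\pmod{q}$ so that $\abs{\dangle{s}}=2$ exactly, whereas the paper only notes $[\qq(\zeta_q):K]\geq 2$; but this is a difference of exposition, not of method.
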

\begin{proof}
  Indeed, we assume that $\zeta_q= \exp{2\pi i/ q}=\cos (2\pi/q)+i
  \sin(2\pi/q)$.  Then $\zeta_q^s= \exp{(q-2)\pi i/q} =-\cos (2\pi/q)+i
  \sin(2\pi/q)$. 

  Let $K$ be the subfield of $\qq(\zeta_q)$ fixed by $s$. It is clear
  that $2i\sin(2\pi/q)= \zeta_q+\zeta_q^s\in K$.
Now $\zeta_q$ satisfies the quadratic equation over
$\qq(\alpha)$: 
\[ x^2- 2i\sin(2\pi/q) x -1 =0. \]
We have $\qq(\alpha)\subseteq K \subset \qq(\zeta_q)$. We have just
shown $[\qq(\zeta_q): \qq(\alpha)]\leq 2$. However,
$[\qq(\zeta_q):K]\geq 2$. Therefore, $[\qq(\zeta_q): \qq(\alpha)]=2$,
and $\qq(\alpha)=K$. 
\end{proof}




\bibliographystyle{plain}
\bibliography{TeXBiB}

\def\cprime{$'$}
\begin{thebibliography}{10}

\bibitem{MR990016}
Yves Andr{\'e}.
\newblock {\em {$G$}-functions and geometry}.
\newblock Aspects of Mathematics, E13. Friedr. Vieweg \& Sohn, Braunschweig,
  1989.

\bibitem{MR1129293}
Noboru Aoki.
\newblock Simple factors of the {J}acobian of a {F}ermat curve and the {P}icard
  number of a product of {F}ermat curves.
\newblock {\em Amer. J. Math.}, 113(5):779--833, 1991.

\bibitem{MR2062673}
Christina Birkenhake and Herbert Lange.
\newblock {\em Complex abelian varieties}, volume 302 of {\em Grundlehren der
  Mathematischen Wissenschaften [Fundamental Principles of Mathematical
  Sciences]}.
\newblock Springer-Verlag, Berlin, second edition, 2004.

\bibitem{MR807702}
J.~W.~S. Cassels.
\newblock The arithmetic of certain quartic curves.
\newblock {\em Proc. Roy. Soc. Edinburgh Sect. A}, 100(3-4):201--218, 1985.

\bibitem{MR1075639}
Paula Cohen and J{\"u}rgen Wolfart.
\newblock Modular embeddings for some nonarithmetic {F}uchsian groups.
\newblock {\em Acta Arith.}, 56(2):93--110, 1990.

\bibitem{MR948246}
Robert~F. Coleman.
\newblock Torsion points on curves.
\newblock In {\em Galois representations and arithmetic algebraic geometry
  ({K}yoto, 1985/{T}okyo, 1986)}, volume~12 of {\em Adv. Stud. Pure Math.},
  pages 235--247. North-Holland, Amsterdam, 1987.

\bibitem{MR1085259}
Johan de~Jong and Rutger Noot.
\newblock Jacobians with complex multiplication.
\newblock In {\em Arithmetic algebraic geometry ({T}exel, 1989)}, volume~89 of
  {\em Progr. Math.}, pages 177--192. Birkh\"auser Boston, Boston, MA, 1991.

\bibitem{MR0498581}
Pierre Deligne.
\newblock Travaux de {S}himura.
\newblock In {\em S\'eminaire {B}ourbaki, 23\`eme ann\'ee (1970/71), {E}xp.
  {N}o. 389}, pages 123--165. Lecture Notes in Math., Vol. 244. Springer,
  Berlin, 1971.

\bibitem{MR1708603}
Gerhard Frey, Ernst Kani, and Helmut V{\"o}lklein.
\newblock Curves with infinite {$K$}-rational geometric fundamental group.
\newblock In {\em Aspects of {G}alois theory ({G}ainesville, {FL}, 1996)},
  volume 256 of {\em London Math. Soc. Lecture Note Ser.}, pages 85--118.
  Cambridge Univ. Press, Cambridge, 1999.

\bibitem{MR1853452}
J.~Gu{\`a}rdia.
\newblock Explicit geometry on a family of curves of genus 3.
\newblock {\em J. London Math. Soc. (2)}, 64(2):299--310, 2001.

\bibitem{MR0463157}
Robin Hartshorne.
\newblock {\em Algebraic geometry}.
\newblock Springer-Verlag, New York, 1977.
\newblock Graduate Texts in Mathematics, No. 52.

\bibitem{MR1458755}
Fumio Hazama.
\newblock Hodge cycles on the {J}acobian variety of the {C}atalan curve.
\newblock {\em Compositio Math.}, 107(3):339--353, 1997.

\bibitem{MR2386879}
J.~W.~P. Hirschfeld, G.~Korchm{\'a}ros, and F.~Torres.
\newblock {\em Algebraic curves over a finite field}.
\newblock Princeton Series in Applied Mathematics. Princeton University Press,
  Princeton, NJ, 2008.

\bibitem{MR511556}
Neal Koblitz and David Rohrlich.
\newblock Simple factors in the {J}acobian of a {F}ermat curve.
\newblock {\em Canad. J. Math.}, 30(6):1183--1205, 1978.

\bibitem{MR1107394}
Ja~Kyung Koo.
\newblock On holomorphic differentials of some algebraic function field of one
  variable over {${\bf C}$}.
\newblock {\em Bull. Austral. Math. Soc.}, 43(3):399--405, 1991.

\bibitem{MR1878556}
Serge Lang.
\newblock {\em Algebra}, volume 211 of {\em Graduate Texts in Mathematics}.
\newblock Springer-Verlag, New York, third edition, 2002.

\bibitem{MR861976}
J.~S. Milne.
\newblock Jacobian varieties.
\newblock In {\em Arithmetic geometry ({S}torrs, {C}onn., 1984)}, pages
  167--212. Springer, New York, 1986.

\bibitem{MR2987306}
Shinichi Mochizuki.
\newblock Topics in absolute anabelian geometry {I}: generalities.
\newblock {\em J. Math. Sci. Univ. Tokyo}, 19(2):139--242, 2012.

\bibitem{MoonenOort}
B.~Moonen and F.~Oort.
\newblock The torelli locus and special subvarieties.
\newblock In G.~Farkas and I.~Morrison, editors, {\em The Handbook of Moduli},
  volume~II, pages 545--590. International Press, Boston, MA, 2012.
\newblock to appear.

\bibitem{Mumford_AV}
D.~Mumford.
\newblock {\em Abelian varieties}.
\newblock Oxford University Press, London, second edition, 1974.

\bibitem{MR1472499}
Frans Oort.
\newblock Canonical liftings and dense sets of {CM}-points.
\newblock In {\em Arithmetic geometry ({C}ortona, 1994)}, Sympos. Math.,
  XXXVII, pages 228--234. Cambridge Univ. Press, Cambridge, 1997.

\bibitem{MR2166087}
Richard Pink.
\newblock A combination of the conjectures of {M}ordell-{L}ang and
  {A}ndr\'e-{O}ort.
\newblock In {\em Geometric methods in algebra and number theory}, volume 235
  of {\em Progr. Math.}, pages 251--282. Birkh\"auser Boston, Boston, MA, 2005.

\bibitem{MR2674831}
Joseph~J. Rotman.
\newblock {\em Advanced modern algebra}, volume 114 of {\em Graduate Studies in
  Mathematics}.
\newblock American Mathematical Society, Providence, RI, 2010.
\newblock Second edition [of MR2043445].

\bibitem{Shimura_CM}
G.~Shimura.
\newblock {\em Abelian varieties with complex multiplication and modular
  functions}.
\newblock Princeton University Press, 1997.

\bibitem{MR2514094}
Joseph~H. Silverman.
\newblock {\em The arithmetic of elliptic curves}, volume 106 of {\em Graduate
  Texts in Mathematics}.
\newblock Springer, Dordrecht, second edition, 2009.

\bibitem{MR1357406}
Christopher Towse.
\newblock Weierstrass points on cyclic covers of the projective line.
\newblock {\em Trans. Amer. Math. Soc.}, 348(8):3355--3378, 1996.

\bibitem{MR1421575}
Lawrence~C. Washington.
\newblock {\em Introduction to cyclotomic fields}, volume~83 of {\em Graduate
  Texts in Mathematics}.
\newblock Springer-Verlag, New York, second edition, 1997.

\bibitem{MR931211}
J{\"u}rgen Wolfart.
\newblock Werte hypergeometrischer {F}unktionen.
\newblock {\em Invent. Math.}, 92(1):187--216, 1988.

\bibitem{xue_JNT}
J.~Xue.
\newblock Endomorphism algebras of jacobians of certain superelliptic curves.
\newblock {\em J. Number Theory}, 131:332--342, 2011.

\bibitem{MR2040573}
Yuri~G. Zarhin.
\newblock The endomorphism rings of {J}acobians of cyclic covers of the
  projective line.
\newblock {\em Math. Proc. Cambridge Philos. Soc.}, 136(2):257--267, 2004.

\bibitem{MR2166091}
Yuri~G. Zarhin.
\newblock Endomorphism algebras of superelliptic {J}acobians.
\newblock In {\em Geometric methods in algebra and number theory}, volume 235
  of {\em Progr. Math.}, pages 339--362. Birkh\"auser Boston, Boston, MA, 2005.

\bibitem{MR2349666}
Yuri~G. Zarhin.
\newblock Superelliptic {J}acobians.
\newblock In {\em Diophantine geometry}, volume~4 of {\em CRM Series}, pages
  363--390. Ed. Norm., Pisa, 2007.

\bibitem{MR2471095}
Yuri~G. Zarhin.
\newblock Endomorphisms of superelliptic {J}acobians.
\newblock {\em Math. Z.}, 261(3):691--707, 2009.
\newblock Updated in 2012 at http://arxiv.org/pdf/math/0605028.pdf.

\end{thebibliography}
\end{document}